\numberwithin{equation}{section}
\newtheorem{theorem}{Theorem}[section]
\newtheorem{corollary}[theorem]{Corollary}
\newtheorem{proposition}[theorem]{Proposition}
\newtheorem{lemma}[theorem]{Lemma}
\newtheorem{conjecture}[theorem]{Conjecture}
\theoremstyle{definition}
\newtheorem{definition}[theorem]{Definition}
\newtheorem{question}[theorem]{Question}
\newtheorem{example}[theorem]{Example}
\newtheorem{remark}[theorem]{Remark}
\def\red{\textcolor{red}}
\newcommand{\Z}{\mathbb{Z}}
\newcommand{\C}{\mathbb{C}}
\newcommand{\PP}{\mathbb{P}}
\def\CC{\mathbb{C}}
\def\QQ{\mathbb{Q}}
\def\sO{{\mathscr O}}
\newcommand{\cal}{\mathcal}
\def\cA{{\cal A}}
\def\cF{{\cal F}}
\def\cH{{\cal H}}
\def\cM{{\cal M}}
\def\cV{{\cal V}}
\def\cY{{\cal Y}}
\def\tX{{\widetilde{X}}}
\def\tP{\widetilde{P}}
\def\hbar{\overline{h}}
\def\M{\overline{\cM}}
\def\dim{\mathrm{dim} }
\def\Pic{\mathrm{Pic} }
\def\ch{\mathrm{ch} }
\def\and{\quad{\rm and}\quad}
\def\lra{\longrightarrow }
\def\mapright#1{\,\smash{\mathop{\lra}\limits^{#1}}\,}
\def\beq{\begin{equation}}
\def\eeq{\end{equation}}
\def\ben{\begin{enumerate}}
\def\een{\end{enumerate}}
\def\and{\quad\text{and}\quad}
\def\O{\mathscr{O}}
\def\Fl{\mathrm{Fl}}
\def\Fl{\mathrm{Fl}}
\def\E{\mathrm{E}}
\def\V{\mathrm{V}}
\def\G{\Gamma}
\def\k{\kappa}
\def\csf{\mathsf{csf}}
\def\asc{\mathsf{asc}}
\def\span{\mathrm{span} }
\newcommand{\N}{\mathbb{N}}
\title[Birational geometry of Hessenberg varieties]{Birational geometry of generalized Hessenberg varieties and the generalized Shareshian-Wachs conjecture}
\date{February 14, 2024.}
\author{Young-Hoon Kiem}
\address{School of Mathematics, Korea Institute for Advanced Study, 85 Hoegiro, Dongdaemun-gu, Seoul 02455, Korea}
\email{kiem@kias.re.kr}
\author{Donggun Lee}
\address{Center for Complex Geometry, Institute for Basic Science (IBS), 55 Expo-ro, Yuseong-gu, Daejeon 34126, Korea}
\email{dglee@ibs.re.kr}
\thanks{YHK was partially supported by Korea NRF grant 2021R1F1A1046556. DL was supported by the Institute for Basic Science (IBS-R032-D1).}
\begin{document}

\begin{abstract}
We introduce generalized Hessenberg varieties and establish basic facts. We show that the Tymoczko action of the symmetric group $S_n$ on the cohomology of Hessenberg varieties extends to generalized Hessenberg varieties and that natural morphisms among them preserve the action. 
By analyzing natural morphisms and birational maps among generalized Hessenberg varieties, we give an elementary proof of the Shareshian-Wachs conjecture. Moreover we present a natural generalization of the Shareshian-Wachs conjecture that involves generalized Hessenberg varieties and provide an elementary proof. As a byproduct, we propose a generalized Stanley-Stembridge conjecture for \emph{weighted} graphs. Our investigation into the birational geometry of generalized Hessenberg varieties enables us to modify them into much simpler varieties like projective spaces or permutohedral varieties by explicit sequences of blowups or projective bundle maps. Using this, we provide two algorithms to compute the $S_n$-representations on the cohomology of generalized Hessenberg varieties. As an application, we compute representations on the low degree cohomology of some Hessenberg varieties. 
\end{abstract}

\maketitle

\section{Introduction}\label{38}

Hessenberg varieties are closed subvarieties of the variety $\Fl(n)$ of flags in 
$\CC^n$ with many interesting properties \cite{DMPS}.
The Shareshian-Wachs conjecture (Theorem \ref{62}) formulated in \cite{SW} and proved in \cite{BC, GP2} connects the cohomology of Hessenberg varieties with the chromatic quasi-symmetric functions of the associated graphs, which are refinements of the chromatic polynomials defined and studied by G. Birkhoff and H. Whitney in the early twentieth century towards the celebrated four color problem. 

From geometric point of view, Hessenberg varieties have rich structure with lots of morphisms and birational maps among them which are quite useful in the study of their cohomology. 
However, for a full-fledged investigation into the birational geometry of Hessenberg varieties, it seems inevitable to include related algebraic varieties that appear for instance from contractions.

In this paper, we introduce generalized Hessenberg varieties (Definition \ref{5} and Theorem \ref{4}) and show that the torus action and the Tymoczko action \cite{Tym} extend  to generalized Hessenberg varieties (Theorem \ref{17}). Then we prove  that natural morphisms among them preserve the Tymoczko action of $S_n$ (Propositions \ref{21}, \ref{24}, \ref{33}). 
Using these facts, we compare the cohomology of generalized Hessenberg varieties which are related by certain forgetful morphisms (Theorem \ref{26}). 

Based on these basics on generalized Hessenberg varieties, we analyze natural morphisms and birational maps which are decomposed into blowups and projective bundle maps (Theorem \ref{42}). Our analysis leads us to a short elementary proof of the Shareshian-Wachs conjecture (Theorem \ref{62}) by establishing the modular law (Proposition \ref{64}). 
On the other hand, the birational geometry of generalized Hessenberg varieties enables us to modify them into simpler ones and to give formulas that compare the $S_n$-representations on the cohomology of generalized Hessenberg varieties (Proposition \ref{73} and Corollary \ref{48}). 

The Shareshian-Wachs conjecture involves the ordinary Hessenberg varieties in \cite{DMPS} only and it is natural to ask for a generalization that includes the cohomology of generalized Hessenberg varieties. We formulate a natural generalization of the chromatic quasi-symmetric functions for \emph{weighted} graphs (Definition \ref{82}) and an obvious generalization of the Shareshian-Wachs conjecture that relates the cohomology of generalized Hessenberg varieties with the chromatic quasi-symmetric functions of corresponding weighted graphs. 
We prove this generalized conjecture by using relations among generalized Hessenberg varieties (Theorem \ref{82}). We also propose a generalized Stanley-Stembridge conjecture for weighted graphs (Question \ref{81}). 

Since our analysis on the birational geometry enables us to modify generalized Hessenberg varieties into permutohedral varieties or projective spaces through well understood operations, we can use it to compute the cohomology of generalized Hessenberg varieties as $S_n$-representations. 
We provide two algorithms which can be easily implemented on computer programs like Sage (Propositions \ref{101} and \ref{105}) and give a formula for Hessenberg varieties (Theorem \ref{102}). 

As an application of our algorithms, we compute the low degree cohomology of the Hessenberg varieties $X_{h_k}$ associated to the Hessenberg functions $h_k$ defined by $h_k(i)=\min\{i+k,n\}$. 
We provide an alternative proof for some previously known results in \cite{AMS} on the cohomology $H^{\le 2k}(X_{h_k})$ of degrees $\le 2k$ based on our algorithms (Theorems~\ref{114} and \ref{113}) and compute $H^{2k+2}(X_{h_k})$ explicitly (Theorem~\ref{111}), thus establishing the Stanley-Stembridge conjecture for $H^{\leq 2k+2}(X_{h_k})$.

All the varieties in the paper are defined over the complex number field $\CC$. We denote by $\N$ the set of positive integers.

\noindent{\textbf{Acknowledgement.} We thank Jaehyun Hong and Jaeseong Oh for enlightening discussions. We also thank Timothy Chow and Eric Sommers for useful comments.

\bigskip

\section{Generalized Hessenberg varieties}\label{11}
In this section, we introduce \emph{generalized Hessenberg varieties} 
and prove that the Tymoczko action of $S_n$ extends to the cohomology of generalized Hessenberg varieties. Furthermore, we prove several natural maps among the cohomology groups of generalized Hessenberg varieties are $S_n$-equivariant.

\subsection{Definition and basic properties}\label{12}

In this paper, we only deal with regular semisimple Hessenberg varieties of type A. 
So we drop the phrases ``regular semisimple'' and ``type A'' for simplicity.

Let $x$ be a diagonal $n\times n$ matrix with \emph{distinct nonzero} complex entries. The choice of $x$
does not play a role and we fix $x$ once and for all throughout this paper. 
 For any positive integer $m$, we write $[m]=\{1,2,\cdots,m\}$. 
\begin{definition}\label{5} 
Let $I$ be a subset of $[n-1]$. 
\begin{enumerate}
    \item A \emph{generalized Hessenberg function} is a function
    \[h:I\cup \{n\}\longrightarrow I\cup \{n\}\]
    satisfying \begin{enumerate}\item $h(i)\geq i$ for all $i\in I\cup\{n\}$ and  
    \item $h(i)\le h(j)$ whenever $i<j$ in $I$.\end{enumerate} 
    We let $\cH_{I,n}$ denote the set of all generalized Hessenberg functions $h:I\cup \{n\}\to I\cup\{n\}$. When $I=[n-1]$, we let $\cH_n:=\cH_{[n-1],n}$ and call an $h\in \cH_n$ an ordinary Hessenberg function. When $I=[r]$ for $1\le r\le n$, we write $\cH_{r,n}=\cH_{[r],n}$. 
    \item A \emph{generalized Hessenberg variety} corresponding to the generalized Hessenberg function $h$ is the closed subvariety
    \beq \label{7}
    X_h=\left\{(V_i)_{i\in I}\in \Fl_I(n)~:~xV_i\subset V_{h(i)} ~\text{ for all }~ i\in I\right\}\eeq
    of the partial flag variety 
    \beq\label{8}\Fl_I(n)=\left\{(V_i)_{i\in I}~:~V_i\subset V_{j}\subset \CC^n ~\text{ for }~ i\le j\text{ in }I, ~\dim V_i=i\right\} 
    \eeq 
    of subspaces of $\CC^n$. We write $\Fl_r(n)=\Fl_{[r]}(n)$ for $r<n$. We let $\Fl(n)=\Fl_{[n-1]}(n)$  denote the variety of full flags in $\CC^n$.  When $h\in \cH_n$, we say that $X_h$ is an ordinary Hessenberg variety. 
\end{enumerate}
\end{definition}
When $I=\emptyset$, there is a unique $h:\{n\}\to \{n\}$ in $\cH_{I,n}$ for which $X_h$ is just a point. 
\begin{example}\label{6a}
(1) If $I=\{1\}$ and $h(1)=n$, then $X_h=\PP^{n-1}$. More generally, if $h(i)=n$ for all $i\in I$, then $X_h=\Fl_I(n)$ is the partial flag variety. 

(2) If $I=\{i\}$ and $h(i)=i$, then $X_h$ consists of $\binom{n}{i}$ points representing $i$-dimensional subspaces spanned by $i$ standard basis vectors. We call these $i$-dimensional  \emph{coordinate subspaces}. 

(3) When $I=[n-1]$, $X_h$ are the usual Hessenberg varieties defined and studied in \cite{DMPS}. 
\end{example}

\medskip

There are many ways to see that the generalized Hessenberg varieties $X_h$ are algebraic varieties. Here is one way which will be useful in \S\ref{36}. Let 
\beq\label{27}
I=\{i_1,i_2,\cdots, i_r\}\subset [n-1], \quad 0=i_0<i_1<i_2<\cdots <i_r<i_{r+1}=n.
\eeq
The partial flag variety is equipped with tautological bundles
$$0=\cV_{i_0}\subset \cV_{i_1}\subset \cV_{i_2}\subset \cdots \subset \cV_{i_r}\subset \cV_{i_{r+1}}=\sO^{\oplus n}_{\Fl_I(n)}.$$
The linear operator $x:\CC^n\to\CC^n$ induces a section 
$$x_1\in H^0(\Fl_I(n), \cV_{i_1}^*\otimes \sO^{\oplus n}_{\Fl_I(n)}/\cV_{h(i_1)})=\mathrm{Hom}_{\Fl_I(n)}(\cV_{i_1}, \sO^{\oplus n}_{\Fl_I(n)}/\cV_{h(i_1)})$$
of the vector bundle $\cV_{i_1}^*\otimes \sO^{\oplus n}_{\Fl_I(n)}/\cV_{h(i_1)}$ of rank $i_1(n-h(i_1))$ by the composition
$$\cV_{i_1}\hookrightarrow \sO^{\oplus n} \mapright{x} \sO^{\oplus n}\lra \sO^{\oplus n}/\cV_{h(i_1)}.$$
The zero locus $Z(x_1)\subset \Fl_I(n)$ of $x_1$ parameterizes partial flags $(V_i)_{i\in I}$ satisfying $xV_{i_1}\subset V_{h(i_1)}$. 

Over $Z(x_1)$, $x$ induces a section 
$$x_2\in H^0(Z(x_1), (\cV_{i_2}/\cV_{i_1})^*\otimes \sO^{\oplus n}/\cV_{h(i_2)})$$
of the vector bundle $(\cV_{i_2}/\cV_{i_1})^*\otimes \sO^{\oplus n}/\cV_{h(i_2)}|_{Z(x_1)}$ of rank $(i_2-i_1)(n-h(i_2))$ by the composition 
$$\cV_{i_2}/\cV_{i_1}\hookrightarrow \sO^{\oplus n}/\cV_{i_1} \mapright{x} \sO^{\oplus n}/\cV_{h(i_1)}\lra \sO^{\oplus n}/\cV_{h(i_2)}.$$
The zero locus $Z(x_1,x_2)\subset Z(x_1)$ of $x_2$ parameterizes partial flags $(V_i)_{i\in I}$ satisfying $xV_{i_1}\subset V_{h(i_1)}$ and $xV_{i_2}\subset V_{h(i_2)}$. 
Continuing this way, we find that 
$$X_h=Z(x_1,\cdots, x_r)\subset \Fl_I(n)$$
is a closed subvariety of the smooth projective variety $\Fl_I(n)$. 

It is well known that the dimension of $\Fl_I(n)$ is 
$$\sum_{k=1}^r (i_k-i_{k-1})(n-i_k)$$
while the sum of the ranks of $(\cV_{i_k}/\cV_{i_{k-1}})^*\otimes \sO^{\oplus n}/\cV_{h(i_k)}$ is
$$\sum_{k=1}^r (i_k-i_{k-1})(n-h(i_k)).$$
Therefore the expected dimension of $X_h$ is 
$$\sum_{k=1}^r (i_k-i_{k-1})((n-i_k)-(n-h(i_k)))=\sum_{k=1}^r(i_k-i_{k-1})(h(i_k)-i_k)$$
which is the actual dimension of $X_h$ by the following. 

\begin{theorem}\label{4}
(1) For $h\in \cH_{I,n}$ with $I$ in \eqref{27}, the Hessenberg variety $X_h$ is a smooth projective variety of dimension 
\beq\label{1} \sum_{k=1}^r(i_k-i_{k-1})(h(i_k)-i_k).\eeq

(2) If $h(i)>i$ for all $i\in I$, $X_h$ is irreducible. If $h(i)=i$ for some $i\in I$, $X_h$ is isomorphic to the disjoint union of $\binom{n}{i}$ copies of $X_{h'}\times X_{h''}$ where 
$$I'=\{i'\in I~:~i'<i\}, \quad h'=h|_{I'\cup \{i\}}\in \cH_{I',i},$$
$$I''=\{i''-i~:~ i''>i, i''\in I\}, \quad h''(i''-i)=h(i'')-i, ~ h''\in \cH_{I'',n-i}.$$

(3) The cycle class map $A^*(X_h)\to H^*(X_h)$ from the Chow ring to the cohomology ring is an isomorphism. 
\end{theorem}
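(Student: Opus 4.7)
The plan is to treat all three parts together, using (2) to reduce the remaining claims to the case $h(i) > i$ for every $i \in I$. First I would address (2): if $h(i) = i$ for some $i \in I$, the condition $xV_i \subset V_i$ forces $V_i$ to be $x$-invariant, and since $x$ has distinct eigenvalues the only such $i$-dimensional subspaces are the $\binom{n}{i}$ coordinate ones $V_i^S$ with $|S|=i$. For each such choice, the direct sum decomposition $\CC^n = V_i^S \oplus W_S$ (with $x$ acting regular semisimply on each summand) decouples the data on either side, yielding the disjoint-union product $X_{h'}\times X_{h''}$. By induction on $|I|$ this reduces parts (1) and (3) to the assumption $h(i) > i$ for every $i \in I$, which I assume henceforth.

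Next I would establish dimension and irreducibility via comparison with the ordinary case. Define the ordinary Hessenberg function $\bar h\in\cH_n$ by $\bar h(j) = h(i_k)$ for $i_{k-1} < j \leq i_k$. The forgetful morphism $\Fl(n) \to \Fl_I(n)$ restricts to a surjection $X_{\bar h} \twoheadrightarrow X_h$ whose fiber above $(V_{i_k})\in X_h$ is the irreducible product $\prod_k \Fl(V_{i_k}/V_{i_{k-1}})$ of dimension $\sum_k\binom{i_k - i_{k-1}}{2}$. Since $X_{\bar h}$ is irreducible of dimension $\sum_j(\bar h(j) - j)$ by \cite{DMPS}, so is $X_h$, and subtracting the fiber dimension yields the formula \eqref{1}.

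For smoothness, the singular locus is $T$-invariant and closed in the projective $X_h$ (where $T$ is the diagonal torus commuting with $x$), so it suffices to verify smoothness at every $T$-fixed coordinate partial flag $F_\sigma$, indexed by a surjection $\sigma:[n]\to [r+1]$. The tangent space $T_{F_\sigma}\Fl_I(n)$ decomposes into one-dimensional $T$-weight spaces $\CC\cdot e_{ab}$ for $\sigma(a) < \sigma(b)$, where $e_{ab}$ is the first-order perturbation $e_a \mapsto e_a + \varepsilon e_b$. Linearizing each condition $xV_{i_k} \subset V_{h(i_k)}$ at $F_\sigma$ and using $\lambda_a \neq \lambda_b$ shows that $e_{ab} \in T_{F_\sigma} X_h$ exactly when $h(i_{\sigma(a)}) \geq i_{\sigma(b)}$; regrouping the count gives
\[
\dim T_{F_\sigma} X_h = \sum_{k=1}^r (i_k - i_{k-1})(h(i_k) - i_k) = \dim X_h,
\]
so $X_h$ is smooth at $F_\sigma$, hence everywhere.

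Finally, for (3), a generic one-parameter subgroup $\CC^*\hookrightarrow T$ acts on $X_h$ with the same isolated fixed points, and Bia\l ynicki-Birula produces an affine cell decomposition indexed by them; this affine paving forces the cycle class map $A^*(X_h) \to H^*(X_h)$ to be an isomorphism. The hard part is the tangent-space linearization: one must simultaneously track how perturbing each $V_{i_k}$ interacts with the perturbation of $V_{h(i_k)}$, and the regular semisimplicity of $x$ is what makes the resulting weight-space constraints cut out exactly the expected codimension. Everything else is bookkeeping once that computation and the cited ordinary case are in hand.
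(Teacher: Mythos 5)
Your proposal is correct, but it takes a partly different route from the paper. The paper's proof runs everything through the auxiliary ordinary Hessenberg function $\tilde{h}$ of \eqref{28}: the Cartesian diagram \eqref{2} exhibits $X_{\tilde h}\to X_h$ as a (surjective, smooth) flag-bundle with fibers $\prod_k\Fl(i_k-i_{k-1})$, so smoothness, the dimension count, irreducibility/the product decomposition, and the isomorphism $A^*\cong H^*$ are all pulled down from the known statements for $X_{\tilde h}$ in \cite{DMPS} (equivalently, via $X_h=G_{\tilde h}/P_I$). You use the same comparison with $\bar h=\tilde h$ only for the dimension and irreducibility; your smoothness argument is different — Borel's fixed point theorem reduces to the $T$-fixed coordinate partial flags, where you compute the kernel of the linearized conditions $xV_{i_k}\subset V_{h(i_k)}$ weight-by-weight (your criterion $e_{ab}\in T_{F_\sigma}X_h$ iff $i_{\sigma(b)}\le h(i_{\sigma(a)})$, and the count $\sum_k(i_k-i_{k-1})(h(i_k)-i_k)$, are correct; just note that the linearization bounds the Zariski tangent space from above, and one concludes by comparing with the lower bound on local dimension already obtained). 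Likewise your part (2) is a direct eigenspace/decoupling argument rather than a citation of \cite{DMPS}, and your part (3) uses the Bia\l ynicki-Birula affine paving directly — exactly the alternative the paper itself mentions in Remark \ref{25} — instead of descending the cycle-class isomorphism along the flag bundle. The trade-off: the paper's descent argument is shorter and leans maximally on \cite{DMPS}, while your fixed-point tangent-weight computation is more self-contained and is essentially the same local data that the paper needs anyway for the GKM description and 1-dimensional orbits in \S2.2, so nothing in your approach would fail; it is just longer where the paper quotes known results.
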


The proof is obtained by comparing $X_h$ with an ordinary Hessenberg variety $X_{\tilde{h}}$ for a Hessenberg function $\tilde{h}$ defined below. In fact, there is a canonical smooth projective morphism $X_{\tilde{h}}\to X_h$.
\begin{definition}
	For $h\in \cH_{I,n}$ with $I$ in \eqref{27}, define $\tilde{h}\in \cH_n$ by
\beq\label{28}
\tilde{h}:[n]\lra [n], \quad \tilde{h}(i)=h(i_k) \ \ \text{for }\ i_{k-1}<i\le i_k.
\eeq
\end{definition}

By definition, we have the Cartesian diagram
\beq\label{2}
\xymatrix{
X_{\tilde{h}}\ar[r] \ar[d] & \Fl_{[n-1]}(n)\ar[d]\\
X_h\ar[r] &\Fl_I(n)
}\eeq
whose right vertical arrow is the forgetful morphism which is smooth with fibers 
$$\prod_{k=1}^{r+1}\Fl(i_k-i_{k-1})$$ because 
$\Fl_{[n-1]}(n)$ is in fact the fiber product 
$$\Fl(\cV_{i_1})\times_{\Fl_I(n)}\Fl(\cV_{i_2}/\cV_{i_{1}})\times_{\Fl_I(n)}\cdots \times_{\Fl_I(n)} \Fl(\cV_{i_{r+1}}/\cV_{i_{r}})$$
where $\Fl(\cV_{i_k}/\cV_{i_{k-1}})$ denotes the flag bundle parameterizing full flags in $\cV_{i_k}/\cV_{i_{k-1}}|_\xi$ for $\xi\in \Fl_I(n)$. 

\medskip

\noindent\emph{Proof of Theorem~\ref{4}}.
By \cite[Theorem 6]{DMPS}, $X_{\tilde{h}}$ is a smooth projective variety of dimension $\sum_{i=1}^n(\tilde{h}(i)-i)$. Hence $X_h$ is smooth of dimension
\beq\label{3} \sum_{i=1}^n(\tilde{h}(i)-i)-\sum_{k=1}^{r+1}\dim \,\Fl(i_k-i_{k-1}).\eeq
It is elementary to see that \eqref{3} equals \eqref{1} by $\dim\, \Fl(m)=m(m-1)/2$. 

By \eqref{2}, (2) and (3) follow from the corresponding statements for $X_{\tilde{h}}$ in \cite{DMPS}. 
\hfill $\square$

\medskip

Another way to see Theorem \ref{4} is as follows. Let $G=\mathrm{GL}_n(\CC)$ and $B$ be the subgroup of upper triangular matrices. By \cite{DMPS}, the top row in \eqref{2} is 
$$X_{\tilde{h}}=G_{\tilde{h}}/B\hookrightarrow G/B=\Fl_{[n-1]}(n)=\Fl(n)$$
where $G_{\tilde{h}}=\{g\in G\,|\, (g^{-1}xg)_{ij}=0\text{ if }i>\tilde{h}(j)\}.$
Here $A_{ij}$ denotes the $(i,j)$-th entry of a matrix $A$. 
Let 
$$P_I=\{g\in G\,|\, g_{ij}=0\text{ if }i_{k-1}<j\le i_k<i\}.$$
Then $P_I\subset G_{\tilde{h}}$ and 
$$X_h=G_{\tilde{h}}/P_I\hookrightarrow G/P_I=\Fl_I(n)$$
is the bottom row in \eqref{2}. The vertical arrows in \eqref{2} have fibers 
$$P_I/B\cong \prod_{k=1}^{r+1}\Fl(i_k-i_{k-1}).$$

\begin{remark}
	Generalized Hessenberg varieties are equal to a certain class of parabolic Lusztig varieties recently defined in \cite{AN3}.
	To be precise, for a generalized Hessenberg function $h:I\cup \{n\}\to I\cup\{n\}$, let $w\in S_n$ be the codominant permutation associated to the Hessenberg function $\tilde{h}$ defined by \eqref{28}, i.e. the maximum element of $\{w'\in S_n:w'(i)\leq \tilde{h}(i) \text{ for all }i\}$ with respect to the Bruhat order.
	Let $J=[n-1]-I$. Then, $X_h$ is equal to the parabolic Lusztig variety $\cY_{w_0,J}(x)$, as a subvariety of $\Fl_I(n)$, for the minimum element $w_0$ of $(S_n)_Jw$ with respect to the Bruhat order, where $(S_n)_J$ denotes the subgroup of $S_n$ generated by simple transpositions $(j,j+1)\in S_n$ for $j\in J$ (see \cite[Proposition 6.1]{AN3}).
\end{remark}

\subsection{Torus action and the Tymoczko action}\label{13}
 In this subsection, we show that the torus action and Tymoczko's dot action in \cite{Tym} extend to the setting of generalized Hessenberg varieties.

Let $T=(\CC^*)^n\subset G=\mathrm{GL}_n(\CC)$ denote the group of diagonal invertible matrices. Let $h\in \cH_{I,n}$. 
Then $T$ acts on the generalized Hessenberg variety $X_h$ by sending
$(V_i)_{i\in I}$ to $(gV_i)_{i\in I}$ for $g\in T$. 
As $x, g\in T$  commute, this action is well defined. 

It is straightforward to see that the $T$-fixed point set $X_h^T$ in $X_h$ is 
\beq\label{14} X_h^T=S_nP_I/P_I\subset G_{\tilde{h}}/P_I=X_h\eeq
where we have identified the symmetric group $S_n$ with the group of permutation matrices. 
Hence $X_h^T$ is finite and has an action of $S_n$ by left multiplication. 
Indeed, given $\sigma\in S_n$, we have a fixed point $(V_i)_{i\in I}\in X_h^T$  defined by
\beq\label{15} V_i=\span\{e_{\sigma(1)}, e_{\sigma(2)},\cdots,e_{\sigma(i)}\}\eeq
where $e_1,\cdots, e_n$ are the standard basis vectors in $\CC^n$. 
In particular, we have a surjective $S_n$-equivariant map 
$$v:S_n\lra S_nP_I/P_I=X_h^T.$$

It is also straightforward that a 1-dimensional $T$-orbit in $X_h$ is the orbit of $(V_i)_{i\in I}$ such that for some pair $j<k$ in $I$ with $k\leq h(j)$ and for some $\sigma\in S_n$, $V_i$ is defined as in \eqref{15} if $i< j$ or $i\ge k$ and 
\beq\label{16}
V_i=\span\{e_{\sigma(1)}, e_{\sigma(2)},\cdots,e_{\sigma(j-1)},  e_{\sigma(j)}+e_{\sigma(k)}, e_{\sigma(j+1)}, \cdots, e_{\sigma(i)}\}\eeq
for $j\le i<k$. 
In particular, by \cite{GKM}, the $T$-equivariant cohomology ring of $X_h$ is 
\beq\label{19}
H^*_T(X_h)=\{(p_v)_{v\in X_h^T}\,|\, t_{\sigma(j)}-t_{\sigma(k)}\text{ divides }p_{v(\sigma)}-p_{v(\sigma\circ(j,k))}\text{ for }\sigma\in S_n\}
\eeq
where $p_v\in \QQ[t_1,\cdots,t_n]$ are polynomials in $n$ variables and $(j,k)\in S_n$ denotes the transposition interchanging $j$ and $k$. 

Tymoczko's dot action introduced in \cite{Tym} for the cohomology of ordinary Hessenberg varieties with $I=[n-1]$ now extends to the cohomology of generalized Hessenberg varieties without change as follows. 
For $\mu\in S_n$ and $p=(p_v)_{v\in X_h^T}\in H^*_T(X_h)$, let 
\beq\label{18} (\mu\cdot p)_v=\mu\cdot p_{\mu^{-1}(v)}\eeq
where $\mu\cdot f(t_1,\cdots,t_n)=f(t_{\mu(1)},\cdots, t_{\mu(n)})$ for $f\in \QQ[t_1,\cdots, t_n]$. 

\begin{theorem}\label{17}
The Tymoczko action of $S_n$ on $H^*_T(X_h)$ is well defined. It induces an $S_n$-action on $H^*(X_h)$ by the isomorphism
\beq\label{20} H^*(X_h)\cong H^*_T(X_h)/\mathbf{m} H^*_T(X_h)\eeq
where $\mathbf{m}=(t_1,\cdots,t_n)\subset \QQ[t_1,\cdots,t_n]$ is the maximal homogeneous ideal. 
\end{theorem}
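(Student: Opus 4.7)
The plan has two parts: (i) verify that the assignment \eqref{18} preserves the GKM description \eqref{19} and defines a group action, so the Tymoczko action is well-defined on $H^*_T(X_h)$; and (ii) establish the isomorphism \eqref{20} by showing equivariant formality of $X_h$, so that the action descends to $H^*(X_h)$.

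For (i), the group-action axioms hold formally from the definition \eqref{18} exactly as in Tymoczko's original setting, and the only real content is checking that $\mu\cdot p$ still satisfies the GKM divisibility relations when $p$ does. I would fix $\mu\in S_n$, $p=(p_v)\in H^*_T(X_h)$, and set $q=\mu\cdot p$. Because the $S_n$-action on $X_h^T$ is by left multiplication of permutation matrices on cosets, the map $v:S_n\to X_h^T$ in \eqref{15} is $S_n$-equivariant, so $\mu^{-1}(v(\sigma))=v(\mu^{-1}\sigma)$ for every $\sigma\in S_n$. Hence for any pair $j<k$ in $I$ with $k\le h(j)$,
$$
q_{v(\sigma)}-q_{v(\sigma\circ(j,k))}=\mu\cdot\bigl(p_{v(\mu^{-1}\sigma)}-p_{v(\mu^{-1}\sigma\circ(j,k))}\bigr).
$$
The GKM condition for $p$ at the permutation $\mu^{-1}\sigma$ shows that this expression is divisible by
$$
\mu\cdot\bigl(t_{\mu^{-1}\sigma(j)}-t_{\mu^{-1}\sigma(k)}\bigr)=t_{\sigma(j)}-t_{\sigma(k)},
$$
as required.

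For (ii), I would show that $H^*(X_h)$ is concentrated in even degrees, which is equivalent to equivariant formality for our smooth projective $T$-variety with isolated fixed points and immediately yields \eqref{20}. Using the Cartesian square \eqref{2}, the morphism $X_{\tilde h}\to X_h$ is the pullback of the flag bundle $\Fl(n)\to\Fl_I(n)$ and is therefore Zariski-locally trivial with fiber $F=\prod_{k=1}^{r+1}\Fl(i_k-i_{k-1})$. Since $H^*(F)$ admits a basis realized by Chern classes of the tautological bundles on $\Fl(n)$ restricted to $X_{\tilde h}$, the Leray--Hirsch theorem gives $H^*(X_{\tilde h})\cong H^*(X_h)\otimes H^*(F)$ as graded vector spaces. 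By \cite{DMPS}, $H^*(X_{\tilde h})$ is concentrated in even degrees, and since $H^*(F)$ is likewise even-concentrated with $H^0(F)\neq 0$, this forces $H^*(X_h)$ to be concentrated in even degrees.

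To conclude, since $\mathbf{m}\subset\QQ[t_1,\ldots,t_n]$ is stable under permutation of variables, the submodule $\mathbf{m}H^*_T(X_h)$ is preserved by the Tymoczko action established in (i), so that action descends through \eqref{20} to an $S_n$-action on $H^*(X_h)$. The principal technical input is the equivariant formality in (ii); the verification in (i) is essentially formal once one records the equivariance $\mu\cdot v(\sigma)=v(\mu\sigma)$ of the fixed-point labeling.
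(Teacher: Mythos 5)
Your proposal is correct and follows essentially the same route as the paper: the divisibility check using the $S_n$-equivariance of the fixed-point labeling $v$ is identical to the paper's argument, and your observation that $\mathbf{m}H^*_T(X_h)$ is $S_n$-stable is the same point the paper makes by noting the action preserves polynomial degrees. Your Leray--Hirsch argument for evenness of $H^*(X_h)$ (hence equivariant formality and \eqref{20}) is a self-contained re-derivation of what the paper already records in Theorem \ref{4}(3) and Remark \ref{25}, obtained there from the same Cartesian diagram \eqref{2}.
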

\begin{proof}
Let $p=(p_v)_{v\in X_h^T}\in H^*_T(X_h)$. Let $\sigma\in S_n$ and $j\in I$. 
Consider the 1-dimensional orbit defined by \eqref{15} for $V_i$ with $i< j$ or $i\ge k$ and \eqref{16} for $V_i$ for $j\le i<k.$  
Then $t_{\sigma(j)}-t_{\sigma(k)}$ divides $p_{v(\sigma)}-p_{v(\sigma\circ(j,k))}$ for any $\sigma\in S_n$.
Hence, for $\mu\in S_n$, 
$$(\mu\cdot p)_{v(\sigma)}-(\mu\cdot p)_{v(\sigma\circ(j,k))}=
\mu\cdot \left( p_{v(\mu^{-1}\sigma)}-p_{v(\mu^{-1}\sigma\circ(j,k))} \right)$$
is divisible by 
$$\mu\cdot (t_{\mu^{-1}\sigma(j)}-t_{\mu^{-1}\sigma(k)})=t_{\sigma(j)}-t_{\sigma(k)}.$$
Therefore, $\mu\cdot p\in H^*_T(X_h)$ and the action \eqref{18} is well defined on \eqref{19}.

Since the Tymoczko action preserves the degrees of polynomials, the $S_n$-action on $H^*_T(X_h)$ induces an $S_n$-action on $H^*(X_h)\cong A^*(X_h)$ by \eqref{20}.
\end{proof}

\begin{remark}\label{25}
By Theorem \ref{4} (3), for any $k\ge 0$,
\beq \label{29}
H^{2k}(X_h)\cong A^k(X_h),\quad H^{2k+1}(X_h)=0.
\eeq
For notational convenience, we use the Chow ring $A^*(X_h)$ instead of the cohomology ring $H^*(X_h)$ from now on. Note that \eqref{29} also follows from the Bia\l ynicki-Birula decomposition of $X_h$ by affine spaces as in \cite[III]{DMPS}.
\end{remark}

\begin{definition}\label{23}
For $h\in \cH_{I,n}$, the \emph{$S_n$-equivariant Poincar\'e polynomial} of $X_h$ is defined as 
\beq\label{23a} \cF(h)=\sum_{k\geq 0}\ch\left(A^k(X_h)\right)q^k \in \Lambda^n[q]\eeq
where $\ch$ denotes the Frobenius characteristic and $\Lambda^n$ denotes the group of symmetric functions of degree $n$. 
\end{definition}

For example, when $X_h=\PP^{n-1}$ as in Example \ref{6a} (1), \eqref{23a} is 
\beq\label{23b}
\cF(h)=[n]_q:=\frac{1-q^n}{1-q}=1+q+\cdots+q^{n-1}\eeq
because $S_n$ acts trivially on the cohomology group.
Here we suppressed the Frobenius characteristic $\mathsf{h}_n$ of the trivial representation of $S_n$, to simplify the notation.  
When $X_h=\Fl(n)$ is the full flag variety, \eqref{23a} is 
\beq\label{23c}
\cF(h)=[n]_q!:=[n]_q[n-1]_q\cdots [1]_q
\eeq
because $\Fl(n)$ is $Y_{n-1}$ in the sequence of projective bundles
$$Y_1=\PP^{n-1}, ~ Y_k=\PP_{Y_{k-1}} (\sO_{Y_{k-1}}^{\oplus n}/\cV_{k-1}), 
~ 1< k<n,$$
where $\cV_k$ is the vector bundle of rank $k$ obtained as the inverse image of $\sO_{Y_k}(-1)$
by the quotient $\sO^{\oplus n}_{Y_k}\to \sO^{\oplus n}_{Y_k}/\cV_{k-1}|_{Y_k}$, with $\cV_1=\sO_{Y_1}(-1)$. 
It is well known (cf. \cite{Tym}) that the Tymoczko action is trivial in this case. 
One can also immediately deduce \eqref{23c} from Corollary \ref{31} below by choosing $I=\emptyset$. 

The goal of this paper is to find $\cF(h)$ by the birational geometry of Hessenberg varieties.

\subsection{Comparison maps and equivariance} \label{22}
There are many morphisms among generalized Hessenberg varieties.
In this subsection, we see that the induced maps on cohomology are often $S_n$-equivariant. 

\begin{proposition}\label{21}
Let $h\in \cH_{I,n}$, $h'\in\cH_{I',n}$ and $T=(\CC^*)^n$. 
Let  $$f:X_{h'}\lra X_h$$ be a $T$-equivariant morphism whose restriction $X_{h'}^T\xrightarrow{}X_h^T$ to the fixed point loci is $S_n$-equivariant via \eqref{14}.  
Then the pullback
 \beq\label{21a} f^*:A^*(X_h)\longrightarrow A^*(X_{h'})\eeq
is $S_n$-equivariant.
\end{proposition}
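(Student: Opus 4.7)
\emph{Proof plan.} My strategy is to lift the entire question to $T$-equivariant cohomology, where the Tymoczko action is specified combinatorially by \eqref{18} and can be checked componentwise on the GKM presentation \eqref{19}. Since $f:X_{h'}\to X_h$ is $T$-equivariant, it induces a ring map $f^*:H^*_T(X_h)\to H^*_T(X_{h'})$ compatible via \eqref{20} with the pullback on ordinary cohomology, hence (by Theorem \ref{4}(3)) with the Chow pullback \eqref{21a}. It therefore suffices to check that $f^*$ on $T$-equivariant cohomology intertwines the two Tymoczko actions; reducing modulo the maximal ideal $\mathbf{m}$ then yields the proposition.

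The first step is to identify $f^*$ inside the GKM picture. For each $w\in X_{h'}^T$, the inclusion of $\{w\}$ factors as $\{w\}\hookrightarrow X_{h'}\xrightarrow{f} X_h$, and since $w$ is sent to $f(w)\in X_h^T$, naturality of restriction gives, for any class $p=(p_v)_{v\in X_h^T}\in H^*_T(X_h)$,
\[(f^*p)_w \;=\; p_{f(w)}.\]
This identification rests on the injectivity of the fixed-point restriction map $H^*_T(X_{h'})\hookrightarrow \bigoplus_{w\in X_{h'}^T}H^*_T(\{w\})$, which follows from equivariant formality via the vanishing of odd cohomology recorded in Remark \ref{25}, together with the finiteness of the $T$-fixed loci noted in \eqref{14}.

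The second step is a direct computation. Using the hypothesis $f(\mu w)=\mu f(w)$ for every $\mu\in S_n$ and $w\in X_{h'}^T$, together with \eqref{18},
\[(\mu\cdot f^*p)_w \;=\; \mu\cdot (f^*p)_{\mu^{-1}w} \;=\; \mu\cdot p_{f(\mu^{-1}w)} \;=\; \mu\cdot p_{\mu^{-1}f(w)} \;=\; (\mu\cdot p)_{f(w)} \;=\; (f^*(\mu\cdot p))_w,\]
so $f^*$ is $S_n$-equivariant on $H^*_T$. Passing to $H^*_T/\mathbf{m}H^*_T\cong H^*(X_h)\cong A^*(X_h)$ on both sides yields the desired equivariance of \eqref{21a}.

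The only real subtlety is the first step, namely justifying that $f^*$ acts by $(f^*p)_w=p_{f(w)}$ in the GKM presentation. Once equivariant formality is invoked to guarantee injectivity of restriction to fixed points, the equivariance becomes a short formal manipulation with the definition \eqref{18} of the Tymoczko action and the $S_n$-equivariance of $f$ on fixed loci; no properties of $f$ beyond those hypotheses are needed.
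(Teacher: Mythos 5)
Your proposal is correct and is essentially the paper's argument: the paper proves the same statement by the commutative square of restrictions to fixed loci, using that the vertical restriction maps are injective and $S_n$-equivariant (via \eqref{18}--\eqref{19}) and that the bottom map on fixed loci is $S_n$-equivariant, then reducing modulo $\mathbf{m}$ via \eqref{20}. Your explicit componentwise identity $(f^*p)_w=p_{f(w)}$ and the chain of equalities just spell out that same diagram chase in the GKM presentation.
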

\begin{proof} As $f$ is $T$-equivariant, we have a commutative diagram
\[\xymatrix{
X_h & X_{h'}\ar[l]_f\\
X_h^T\ar@{^(->}[u] & X^T_{h'}\ar[l]\ar@{^(->}[u]
}\]
which induces the commutative diagram
\[\xymatrix{
A^*_T(X_h)\ar[r]^{f^*}\ar@{^(->}[d] & A^*_T(X_{h'})\ar@{^(->}[d]\\
A^*_T(X_h^T)\ar[r] & A^*_T(X^T_{h'})
}.\]
By \eqref{18}, the bottom arrow and the vertical arrows are $S_n$-equivariant. Hence the top arrow is $S_n$-equivariant and by \eqref{20}, \eqref{21a} is $S_n$-equivariant as well.
\end{proof}

Recall that we have tautological bundles $(\cV_i)_{i\in I}$ on $X_h$ coming from those on the partial flag variety $\Fl_I(n)$. 
\begin{proposition}\label{24}
For $i<j$ in $I$ and 
$0\le d\le j-i$, the $d$-th Chern class
\[c_d(\cV_j/\cV_i): A^*(X_h)\longrightarrow A^{*+d}(X_h), \quad \xi\mapsto \xi\cup c_d(\cV_j/\cV_i)\]
is $S_n$-equivariant.
\end{proposition}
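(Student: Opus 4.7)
The plan is to lift the Chern class to $T$-equivariant cohomology, describe it via the GKM presentation \eqref{19}, and show that the lifted class is pointwise $S_n$-invariant under the Tymoczko action; since the action is a ring automorphism, cup product with an invariant class is then $S_n$-equivariant.

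First I would verify that the Tymoczko action on $A^*_T(X_h)$ is multiplicative. Multiplication in the ring $\{(p_v)_{v\in X_h^T}\}$ given by \eqref{19} is componentwise, and the rule $(\mu\cdot p)_v=\mu\cdot p_{\mu^{-1}(v)}$ together with the fact that each $\mu\in S_n$ acts by a ring automorphism on $\QQ[t_1,\dots,t_n]$ immediately gives
\[
(\mu\cdot(p\cdot q))_v=\mu\cdot(p_{\mu^{-1}(v)}q_{\mu^{-1}(v)})=(\mu\cdot p)_v\,(\mu\cdot q)_v.
\]
Hence multiplication by any $S_n$-invariant equivariant class is $S_n$-equivariant on $A^*_T(X_h)$, and the same persists after passing to the quotient $A^*(X_h)=A^*_T(X_h)/\mathbf{m}A^*_T(X_h)$ of \eqref{20}.

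Next I would compute the restriction of the equivariant Chern class $c_d^T(\cV_j/\cV_i)$ at each fixed point $v(\sigma)$ for $\sigma\in S_n$. By \eqref{15}, the fiber $\cV_i|_{v(\sigma)}$ is spanned by $e_{\sigma(1)},\dots,e_{\sigma(i)}$, so the $T$-weights of $\cV_j/\cV_i$ at $v(\sigma)$ are $t_{\sigma(i+1)},\dots,t_{\sigma(j)}$ (up to an overall sign depending on the chosen convention). Therefore
\[
c_d^T(\cV_j/\cV_i)_{v(\sigma)}=e_d\bigl(t_{\sigma(i+1)},\dots,t_{\sigma(j)}\bigr),
\]
where $e_d$ denotes the $d$-th elementary symmetric polynomial. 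Applying the Tymoczko action of $\mu\in S_n$ yields
\[
\bigl(\mu\cdot c_d^T(\cV_j/\cV_i)\bigr)_{v(\sigma)}
=\mu\cdot e_d\bigl(t_{\mu^{-1}\sigma(i+1)},\dots,t_{\mu^{-1}\sigma(j)}\bigr)
=e_d\bigl(t_{\sigma(i+1)},\dots,t_{\sigma(j)}\bigr),
\]
which shows that $c_d^T(\cV_j/\cV_i)$ is $S_n$-invariant in $A^*_T(X_h)$.

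Combining the two steps, cup product with $c_d^T(\cV_j/\cV_i)$ is an $S_n$-equivariant endomorphism of $A^*_T(X_h)$; quotienting by $\mathbf{m}A^*_T(X_h)$ via \eqref{20} yields the desired $S_n$-equivariance of cup product with $c_d(\cV_j/\cV_i)$ on $A^*(X_h)$. The only point requiring care will be the computation of the fixed-point weights of $\cV_j/\cV_i$, but this is straightforward from \eqref{15} since the tautological bundles on $X_h$ are pulled back from $\Fl_I(n)$. No genuine obstacle is anticipated; the result is essentially a direct consequence of the GKM description of $H^*_T(X_h)$ and the ring-theoretic nature of the Tymoczko action.
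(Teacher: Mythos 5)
Your proof is correct and follows essentially the same route as the paper: lift to $T$-equivariant cohomology, restrict to the fixed locus via the GKM description, observe that multiplication by the restricted Chern class is $S_n$-equivariant, and descend through \eqref{20}. Your explicit computation of the fixed-point weights $t_{\sigma(i+1)},\dots,t_{\sigma(j)}$ and the resulting invariance of $e_d$ of them simply makes explicit the step the paper summarizes as ``the bottom arrow is $S_n$-equivariant.''
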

\begin{proof}
As the Chern classes commute with restrictions, we have a commutative diagram
\[\xymatrix{
A^*_T(X_h)\ar[rr]^{c_d(\cV_j/\cV_i)}\ar@{^(->}[d] && A^{*+d}_T(X_{h})\ar@{^(->}[d]\\
A^*_T(X_h^T)\ar[rr]^{c_d(\cV_j/\cV_i)|_{X_h^T}} && A^{*+d}_T(X^T_{h})
}.\]
Since the bottom and the vertical arrows are $S_n$-equivariant, so is the top arrow and \eqref{20} implies the proposition.
\end{proof}

If $I'\subset I$ and $h\in \cH_{I,n}$ satisfies $h':=h'|_{I'\cup\{n\}}\in \cH_{I',n}$, 
then we have the forgetful morphism
\beq\label{24a} \rho_{h,h'}:X_{h}\lra X_{h'}\eeq
which forgets $V_j$ for $j\in I-I'$. 
\begin{theorem}\label{26}
Let $h\in \cH_{I,n}$ with $I$ in the form of \eqref{27}. Suppose there is a $1\le k\le r$ such that  $h(i_k)=h(i_{k+1})$ and $h^{-1}(i_k)=\emptyset$. Let $I'=I-\{i_k\}$ and $h'=h|_{I'\cup\{n\}}\in \cH_{I',n}$. Then the morphism $\rho_{h,h'}$ which forgets $V_{i_k}$ is the  
Grassmannian bundle 
\beq\label{30b}\mathrm{Gr}(i_k-i_{k-1}, \cV_{i_{k+1}}/\cV_{i_{k-1}})\lra X_{h'}\eeq
and we have 
\beq\label{30a} \cF(h)=\cF(h')\frac{[i_{k+1}-i_{k-1}]_q!}{[i_{k}-i_{k-1}]_q![i_{k+1}-i_{k}]_q!}.\eeq

In particular, if $i_k=i_{k-1}+1$, $\rho_{h,h'}$ is the 
projective bundle 
$$\PP(\cV_{i_{k+1}}/\cV_{i_{k-1}})\lra X_{h'}.$$ 
In this case, the $S_n$-equivariant Poincar\'e polynomials satisfy
\beq\label{30} \cF(h)=\cF(h')\, [i_{k+1}-i_{k-1}]_q.\eeq 
\end{theorem}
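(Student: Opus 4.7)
The plan is to realize $\rho_{h,h'}$ as the pullback of the forgetful Grassmannian bundle $\Fl_I(n)\to \Fl_{I'}(n)$, and then transfer the resulting Leray-Hirsch decomposition to the Tymoczko-equivariant setting via Propositions \ref{21} and \ref{24}.

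First I would verify that the natural square
\[
\xymatrix{X_h \ar@{^(->}[r] \ar[d]_{\rho_{h,h'}} & \Fl_I(n) \ar[d] \\ X_{h'} \ar@{^(->}[r] & \Fl_{I'}(n)}
\]
is Cartesian. The subvariety $X_h\subset \Fl_I(n)$ is cut out by the conditions $xV_{i_j}\subset V_{h(i_j)}$ for $j=1,\dots,r$. The hypothesis $h^{-1}(i_k)=\emptyset$ ensures that for $j\ne k$ both indices $i_j$ and $h(i_j)$ lie in $I'\cup\{n\}$, so these are exactly the conditions cutting out $X_{h'}\subset \Fl_{I'}(n)$. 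The remaining condition $xV_{i_k}\subset V_{h(i_k)}$ is automatic on the preimage of $X_{h'}$, since $V_{i_k}\subset V_{i_{k+1}}$ and $xV_{i_{k+1}}\subset V_{h(i_{k+1})}=V_{h(i_k)}$ (the latter is part of the conditions defining $X_{h'}$ when $k<r$, and trivial when $k=r$ as $V_n=\CC^n$). Hence $X_h$ is precisely the preimage of $X_{h'}$, so $\rho_{h,h'}$ is the pullback of the Grassmannian bundle $\mathrm{Gr}(i_k-i_{k-1},\cV_{i_{k+1}}/\cV_{i_{k-1}})\to \Fl_{I'}(n)$, proving the first assertion.

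Next, the Leray-Hirsch theorem (or the Grassmannian bundle formula) provides a graded $\Q$-linear isomorphism
\[
\Phi:\;A^*(X_{h'}) \otimes_{\Q} A^*(\mathrm{Gr}(i_k-i_{k-1},\,i_{k+1}-i_{k-1})) \;\xrightarrow{\;\sim\;}\; A^*(X_h),
\]
sending $\xi\otimes s_\lambda$ to $\rho_{h,h'}^*(\xi)\cup s_\lambda\bigl(c_\bullet(\cV_{i_k}/\cV_{i_{k-1}})\bigr)$, where $\{s_\lambda\}$ is the Schur polynomial basis of Schubert classes of the fiber. I would then promote $\Phi$ to an $S_n$-equivariant isomorphism with $S_n$ acting trivially on the right tensor factor. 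The map $\rho_{h,h'}^*$ is $S_n$-equivariant by Proposition \ref{21}, since $\rho_{h,h'}$ is visibly $T$-equivariant and its restriction to fixed loci is the canonical $S_n$-equivariant surjection $S_nP_I/P_I\to S_nP_{I'}/P_{I'}$. Taking $\xi=1$ in Proposition \ref{24} shows each $c_d(\cV_{i_k}/\cV_{i_{k-1}})$ is $S_n$-invariant, so every $s_\lambda(c_\bullet(\cV_{i_k}/\cV_{i_{k-1}}))$ is $S_n$-invariant; combined with the $S_n$-equivariance of cup product by these Chern classes (Proposition \ref{24} again) and commutativity of cup product, this makes $\Phi$ $S_n$-equivariant.

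Finally, taking $S_n$-equivariant Poincar\'e polynomials and using that the Poincar\'e polynomial of $\mathrm{Gr}(a,b)$ is the Gaussian binomial $[b]_q!/([a]_q!\,[b-a]_q!)$ gives \eqref{30a}. In the special case $i_k=i_{k-1}+1$, the Grassmannian bundle reduces to the projective bundle $\PP(\cV_{i_{k+1}}/\cV_{i_{k-1}})\to X_{h'}$ and the Gaussian binomial collapses to $[i_{k+1}-i_{k-1}]_q$, yielding \eqref{30}. The main subtlety is the verification of the Cartesian square, where the hypotheses $h(i_k)=h(i_{k+1})$ and $h^{-1}(i_k)=\emptyset$ are exactly what is needed; once this is done, the rest follows formally from Propositions \ref{21}, \ref{24} and the Leray-Hirsch theorem.
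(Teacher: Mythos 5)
Your proof is correct and follows essentially the same route as the paper: both identify $X_h$ as the fiber product $X_{h'}\times_{\Fl_{I'}(n)}\Fl_I(n)$, obtain the tensor decomposition of Chow rings (the paper via Leray plus Deligne's degeneration criterion, you via Leray-Hirsch with explicit Schur classes of the tautological subbundle --- these are equivalent here), and transfer $S_n$-equivariance via Propositions \ref{21} and \ref{24}. Your careful verification of the Cartesian square (including why the condition $xV_{i_k}\subset V_{h(i_k)}$ is automatic on the preimage of $X_{h'}$) and your explicit tracking of which Chern classes generate the fiber cohomology, namely those of $\cV_{i_k}/\cV_{i_{k-1}}$, supply details the paper leaves implicit.
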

\begin{proof}
Under our assumptions, it is easy to see that $X_h$ equals the fiber product  
$$X_h=X_{h'}\times_{\Fl_{I'}(n)} \Fl_I(n)$$
As the forgetful morphism 
$\Fl_I(n)\to \Fl_{I'}(n)$ is a Grassmannian bundle whose restriction to $X_{h'}$ is 
$\mathrm{Gr}(i_k-i_{k-1}, \cV_{i_{k+1}}/\cV_{i_{k-1}})\to X_{h'}$,
we find that $\rho_{h,h'}$ is \eqref{30b}.

By the Leray spectral sequence and Deligne's degeneration criterion in 
\cite[p.606]{GrHa}, we have an isomorphism
$$A^*(X_h)\cong A^*(X_{h'})\otimes A^*(\mathrm{Gr}(i_k-i_{k-1},i_{k+1}-i_{k-1})).$$
As the cohomology ring of the Grassmannian is generated by the Chern classes of 
$\cV_{i_{k+1}}/\cV_{i_{k-1}}$, by Proposition \ref{24}, we obtain \eqref{30a} because the Poincar\'e polynomial of a Grassmannian $\mathrm{Gr}(k,m)$ is $[m]_q!/[k]_q![m-k]_q!.$
\end{proof}

For example, when $I=\{r\}$ and $h\in\cH_{I,n}$ with $h(r)=n$, we find that $X_h$ is the Grassmannian $\mathrm{Gr}(r,n)$ and the equivariant Poincar\'e polynomial is 
\beq\label{83} \cF(h)=\frac{[n]_q!}{[r]_q![n-r]_q!}\eeq
with the characteristic $\mathsf{h}_n$ of the trivial representation of $S_n$ suppressed. 

\begin{corollary}\label{31} Let $h\in\cH_{I,n}$ be in the form of \eqref{27} and $\tilde{h}\in \cH_n$ be defined by \eqref{28}. Then
\beq\label{32}
\cF(\tilde{h})=\cF(h)\prod_{k=1}^{r+1}[i_k-i_{k-1}]_q!.\eeq 
\end{corollary}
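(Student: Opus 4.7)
The plan is to deduce \eqref{32} from Theorem \ref{26} by iteratively forgetting the indices in $[n-1]-I$, one at a time, in an order that keeps us in the projective bundle case \eqref{30} at every step. More precisely, I would build a chain $[n-1]=J^{(0)}\supsetneq J^{(1)}\supsetneq\cdots\supsetneq J^{(N)}=I$ and set $h^{(m)}=\tilde h|_{J^{(m)}\cup\{n\}}$; each $h^{(m)}$ is a bona fide generalized Hessenberg function because the image of $\tilde h$ lies in $I\cup\{n\}\subset J^{(m)}\cup\{n\}$ and weak monotonicity is inherited, and one checks that $\widetilde{h^{(m)}}=\tilde h$, so $h^{(0)}=\tilde h$ and $h^{(N)}=h$ sit at the two ends of the chain.

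To specify the chain, I would process the blocks $(i_{k-1},i_k]$ independently for $k=1,\dots,r+1$, removing within each block the non-$I$ indices $i_k-1,\,i_k-2,\,\dots,\,i_{k-1}+1$ in this decreasing order. At the step removing $j=i_k-s$ (with $s=1,\dots,i_k-i_{k-1}-1$), the predecessor of $j$ in $J^{(m)}\cup\{n\}$ is $j-1$, since all smaller non-$I$ indices of the block still survive, and the successor is $i_k$, since all larger non-$I$ indices of the block have already been removed. Because $\tilde h$ is constant equal to $h(i_k)$ on $(i_{k-1},i_k]$, we have $h^{(m)}(j)=h^{(m)}(i_k)$; and because the image of $h^{(m)}$ is contained in $I\cup\{n\}$, which is disjoint from $J^{(m)}-I\ni j$, the preimage $(h^{(m)})^{-1}(j)$ is empty. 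Thus the hypotheses of Theorem \ref{26} hold, and the relation $j=(j-1)+1$ puts us in the projective bundle case \eqref{30}, yielding a factor $[i_k-(j-1)]_q=[s+1]_q$.

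Multiplying these factors over the $i_k-i_{k-1}-1$ steps in block $k$ produces $[2]_q[3]_q\cdots[i_k-i_{k-1}]_q=[i_k-i_{k-1}]_q!$ (an empty product when $i_k=i_{k-1}+1$, consistent with $[1]_q!=1$), and aggregating over $k=1,\dots,r+1$ gives $\cF(\tilde h)=\cF(h)\prod_{k=1}^{r+1}[i_k-i_{k-1}]_q!$, which is \eqref{32}. I do not foresee any real obstacle: the argument is mere book-keeping once the decreasing order within each block is chosen, and the verification of Theorem \ref{26}'s hypotheses is routine given that $\tilde h$ takes values only in $I\cup\{n\}$.
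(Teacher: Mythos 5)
Your proof is correct and follows essentially the same route as the paper, which deduces \eqref{32} by applying the projective-bundle case \eqref{30} of Theorem \ref{26} repeatedly to the forgetful morphism $\rho_{\tilde h,h}:X_{\tilde h}\to X_h$. Your contribution is simply to make the bookkeeping explicit (the decreasing removal order within each block and the verification of the hypotheses of Theorem \ref{26} at each step), which the paper leaves to the reader.
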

\begin{proof} Applying \eqref{30} repeatedly to the forgetful morphism 
$$\rho_{\tilde{h},h}:X_{\tilde{h}}\lra X_h,$$
we obtain \eqref{32}.
\end{proof} 
\begin{example}
(1) If $I=\emptyset$ and $h:\{n\}\to \{n\}$ is the unique map in $\cH_{\emptyset, n}$, $X_{\tilde{h}}$ is the full flag variety $\Fl(n)$ and \eqref{32} tells us that $\cF(\tilde{h})$ is \eqref{23c}.

(2) If $I$ is in the form of \eqref{27} and $h(i_k)=n$ for all $k$, then $X_h$ is the partial flag variety $\Fl_I(n)$ and $X_{\tilde{h}}$ is the full flag variety $\Fl(n)$. Hence \eqref{23c} and \eqref{32} gives us 
\beq\label{89} 
\cF(h)=\frac{[n]_q!}{\prod_{k=1}^{r+1}[i_k-i_{k-1}]_q!}\eeq
with the characteristic $\mathsf{h}_n$ of the trivial representation of $S_n$ suppressed as usual.
\end{example}

We also need the pushforwards by inclusion when we deal with blowups.
For $h,h'\in \cH_{I,n}$, we write $h\leq h'$ if $h(i)\leq h'(i)$ for all $i\in I$.
\begin{proposition} \label{33} Let $h\le h'\in \cH_{I,n}$.
The pushforward
    \beq\label{34a} \iota_*:A^*(X_h)\longrightarrow A^{*+d}(X_{h'})\eeq
by the canonical inclusion $\iota:X_h\subset X_{h'}$ is $S_n$-equivariant, where $d$ denotes the codimension of $X_h$ in $X_{h'}$.
\end{proposition}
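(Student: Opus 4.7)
The plan is to follow the same fixed-point strategy as in Propositions \ref{21} and \ref{24}, but now adapted to pushforwards rather than pullbacks or cup products. First, I would observe that the assumption $h\le h'$ gives the inclusion $X_h\subset X_{h'}$, and by \eqref{14} the two $T$-fixed loci coincide as $S_n$-sets: $X_h^T=S_nP_I/P_I=X_{h'}^T$. Since $\iota$ is a closed immersion of smooth projective varieties, it is proper and $T$-equivariant, so it induces an equivariant pushforward $\iota_*:A^*_T(X_h)\to A^{*+d}_T(X_{h'})$. The injectivity of the restriction maps in \eqref{19} (which holds by the Bia\l ynicki-Birula decomposition noted in Remark \ref{25}) reduces the task to verifying $S_n$-equivariance after restriction to fixed points, and then \eqref{20} transports the result to the non-equivariant map $A^*(X_h)\to A^{*+d}(X_{h'})$.

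Next, I would invoke the equivariant self-intersection formula (equivalently, Atiyah--Bott localization for a regular closed embedding): for any $\alpha\in A^*_T(X_h)$ and any $v\in X_{h'}^T=X_h^T$,
\[\iota_*(\alpha)|_v=e_T(N_{X_h/X_{h'}})|_v\cdot\alpha|_v,\]
where $e_T$ denotes the $T$-equivariant Euler class of the normal bundle. This fits into a commutative square whose vertical sides are the injective restrictions of \eqref{19} and whose bottom arrow sends $(\alpha_v)_v\in\bigoplus_{v\in X_h^T}\QQ[t_1,\dots,t_n]$ to $(e_T(N_{X_h/X_{h'}})|_v\cdot\alpha_v)_v\in\bigoplus_{v\in X_{h'}^T}\QQ[t_1,\dots,t_n]$; the vertical arrows are $S_n$-equivariant by the very definition \eqref{18}, so the problem reduces to checking that the bottom arrow is $S_n$-equivariant.

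The heart of the argument is the identity
\[e_T(N_{X_h/X_{h'}})|_{v(\sigma)}=\sigma\cdot\bigl(e_T(N_{X_h/X_{h'}})|_{v(e)}\bigr)\qquad\text{for all }\sigma\in S_n,\]
which I would deduce from $v(\sigma)=\sigma\cdot v(e)$: because $\sigma$ normalizes $T$, the tangent spaces $T_{v(\sigma)}X_h=\sigma\cdot T_{v(e)}X_h$ and $T_{v(\sigma)}X_{h'}=\sigma\cdot T_{v(e)}X_{h'}$ have weights obtained from those at $v(e)$ by the replacement $t_a-t_b\mapsto t_{\sigma(a)}-t_{\sigma(b)}$, which is precisely the action of $\sigma$ on $\QQ[t_1,\dots,t_n]$ featuring in \eqref{18}. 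Given this identity, the relation $\mu\cdot(\nu\cdot f)=(\mu\nu)\cdot f$ together with \eqref{18} yields $(\mu\cdot\iota_*\alpha)|_{v(\sigma)}=\iota_*(\mu\cdot\alpha)|_{v(\sigma)}$ by direct substitution. I expect the main obstacle to be the careful bookkeeping of tangent weights at $v(\sigma)$, but this is essentially the same weight computation that lies behind the GKM description \eqref{19} and is already used in the proof of Theorem \ref{17}.
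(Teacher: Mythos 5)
Your argument is correct and follows essentially the same route as the paper: the paper also reduces to the finite fixed locus via the injective restriction maps and the commutative square $\jmath'^*\circ\iota_*=\iota^T_*\circ\bigl(c_d(N_{X_h/X_{h'}}|_{X_h^T})\cdot\jmath^*\bigr)$ from Fulton's Theorems 6.2--6.3, which is exactly your equivariant self-intersection formula at the common fixed points $X_h^T=X_{h'}^T$. The only difference is that you spell out, via the tangent-weight identity at $v(\sigma)$, the $S_n$-equivariance of multiplication by the equivariant Euler class of the normal bundle, a point the paper simply asserts.
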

\begin{proof}
The map $\iota$ is $T$-equivariant as both $X_h$ and $X_{h'}$ lie in $\Fl_I(n)$ as $T$-invariant subvarieties. So we have the Cartesian diagram
\[\xymatrix{
X_h\ar[r]^\iota & X_{h'}\\
X_h^T\ar@{^(->}[u]^\jmath \ar[r]^{\iota^T} & X^T_{h'}\ar@{^(->}[u]_{\jmath'}
}\]
of inclusions. By \cite[Theorems 6.2 and 6.3]{Ful}, we have the commutative diagram
\[\xymatrix{
A^*_T(X_h) \ar[rr]^{\iota_*} \ar[d]_{c_d(N_{X_h/X_{h'}}|_{X_h^T})\circ\jmath^*} && A^{*+d}_T(X_{h'})\ar[d]^{{\jmath'}^*}\\
A^{*+d}_T(X^T_h)\ar[rr]^{\iota^T_*} && A^{*+d}_T(X^T_{h'})
}\]
where $N_{X_h/X_{h'}}$ denotes the normal bundle to $X_h$ in $X_{h'}$. 
As $\iota^T$ is $S_n$-equivariant, the bottom arrow is $S_n$-equivariant. Since ${\jmath'}^*$ is injective and $\jmath^*$, ${\jmath'}^*$, and $c_d(N_{X_h/X_{h'}}|_{X_h^T})$ are $S_n$-equivariant, the top arrow is $S_n$-equivariant. 
By \eqref{20}, \eqref{34a} is $S_n$-equivariant as well. 
\end{proof}

By Theorem \ref{4}, a generalized Hessenberg variety $X_h$ is \emph{reducible} if and only if $h(i)=i$ for some $i\in I$.  
The cohomology of reducible $X_h$ can be computed from irreducible cases by the following.  
\begin{proposition}\label{34}  Let $h\in \cH_{I,n}$ with $h(i)=i$ for some $i\in I$. 
Using the notation of Theorem \ref{4} (2), we have $$\cF(h)=\cF(h')\cF(h'').$$ 
\end{proposition}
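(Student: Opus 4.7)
The plan is to promote the set-theoretic decomposition in Theorem \ref{4}(2) to an isomorphism of graded $S_n$-representations
\[A^*(X_h)\;\cong\;\mathrm{Ind}_{S_i\times S_{n-i}}^{S_n}\bigl(A^*(X_{h'})\otimes A^*(X_{h''})\bigr),\]
and then invoke the standard product formula for the Frobenius characteristic of a representation induced from a Young subgroup.

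First I would label the $\binom{n}{i}$ connected components of $X_h$ by $i$-subsets $\alpha\subset [n]$: the component $X_h^\alpha$ is the one on which the tautological $V_i$ equals the coordinate subspace $\mathrm{span}\{e_j:j\in\alpha\}$. The permutation $S_n$-action on $\CC^n$ acts transitively on $\{X_h^\alpha\}$, and the stabilizer of the standard component $Y:=X_h^{\{1,\ldots,i\}}$ is exactly the Young subgroup $H:=S_i\times S_{n-i}$. Under the splitting $\CC^n=\mathrm{span}\{e_1,\ldots,e_i\}\oplus\mathrm{span}\{e_{i+1},\ldots,e_n\}$, Theorem \ref{4}(2) identifies $Y\cong X_{h'}\times X_{h''}$.

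Next I would check two claims on the GKM presentation \eqref{19}. Claim (a): the Tymoczko action of $S_n$ on $A^*(X_h)=\bigoplus_\alpha A^*(X_h^\alpha)$ permutes the summands in accordance with the $S_n$-action on $\{\alpha\}$, so $H$ preserves the summand $A^*(Y)$; this is immediate from the description \eqref{14}-\eqref{15} of fixed points and the formula \eqref{18}. Claim (b): the resulting $H$-action on $A^*(Y)\cong A^*(X_{h'})\otimes A^*(X_{h''})$ (via K\"unneth) coincides with the external tensor product of the Tymoczko actions of $S_i$ on $A^*(X_{h'})$ and of $S_{n-i}$ on $A^*(X_{h''})$. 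The crux of claim (b) is that no GKM edge crosses between components: since $h(i)=i$, monotonicity of $h$ forces any $j<k$ in $I$ with $k\leq h(j)$ to satisfy either $j<k\leq i$ or $i<j<k$, so the transposition $(j,k)$ stabilizes $\{1,\ldots,i\}$ setwise. Combined with the obvious factorization of \eqref{18} once the variables $(t_1,\ldots,t_n)$ are split into $(t_1,\ldots,t_i)$ and $(t_{i+1},\ldots,t_n)$, this yields the claim.

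Given (a) and (b), the orbit-stabilizer decomposition supplies the displayed isomorphism, and the standard identity $\ch\bigl(\mathrm{Ind}_{S_i\times S_{n-i}}^{S_n}(V\boxtimes W)\bigr)=\ch(V)\cdot\ch(W)$ from the theory of symmetric functions, applied degree-by-degree in $q$, yields $\cF(h)=\cF(h')\cF(h'')$. The main obstacle is the verification of claim (b), specifically the non-crossing of GKM edges; this is combinatorial and follows cleanly from the hypothesis $h(i)=i$. Everything else is a formal manipulation of induced representations and symmetric functions.
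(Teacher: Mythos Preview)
Your proposal is correct and follows essentially the same approach as the paper: the paper's proof is a one-liner citing Theorem~\ref{4}(2) and the definition of the product of Frobenius characteristics via induction from $S_i\times S_{n-i}$, while you have filled in the details the paper leaves implicit, namely the verification (via the GKM presentation and the non-crossing of edges forced by $h(i)=i$) that the Tymoczko action genuinely permutes the components and restricts on the standard component to the external tensor product action. One minor phrasing issue: the ``permutation $S_n$-action on $\CC^n$'' does not preserve $X_h$ geometrically (it does not commute with $x$), so the transitivity on $\{X_h^\alpha\}$ should be understood, as you do in Claim~(a), at the level of the Tymoczko action on cohomology via the $S_n$-action on $X_h^T$.
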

\begin{proof}
It follows from Theorem \ref{4} (2) and the definition of the Frobenius character where 
the product is defined as 
$$\ch(V')\ch(V'')=\ch\,\mathrm{Ind}^{S_n}_{S_i\times S_{n-i}}(V'\otimes V'')$$
for an $S_i$-module $V'$ and an $S_{n-i}$-module $V''$. 
\end{proof}

\bigskip

\section{Birational geometry of generalized Hessenberg varieties}\label{36}

In this section, we show that certain natural morphisms between generalized Hessenberg varieties are actually blowups along smooth subvarieties which are also  generalized Hessenberg varieties. As a consequence, we obtain useful relations among the equivariant Poincar\'e polynomials of generalized Hessenberg varieties.
These relations provide us with an elementary proof of the Shareshian-Wachs conjecture \cite{SW, BC, GP2}. 
In \S\ref{71}, we present and prove a generalized Shareshian-Wachs conjecture that involves generalized Hessenberg varieties.
In \S\ref{91}, we see that the relations give us algorithms to compute the equivariant Poincar\'e polynomials of generalized Hessenberg varieties, which can be easily implemented on computer programs.

The following notation is useful. 
\begin{definition}\label{130}
Let $I\subset [n-1]$ be a set in the form of \eqref{27}. Let $1\le k\le r$ and $j=i_k$.  
\begin{enumerate}
    \item Let 
    $\kappa_{j}:\cH_{I,n}\to\cH_{I-\{j\},n}$ 
    be the function defined by 
   \[\kappa_jh(i) =\begin{cases}h(i) &\text{ if } i\notin h^{-1}(j) \\ 
   i_{k+1} & \text{ if }i\in h^{-1}(j).\end{cases}\]
    \item Let  
    $\tau_j: \{h\in \cH_{I,n}:h(i_{k-1})<h(i_k),~ j\le h(j)-1\in I\}\to\cH_{I,n}$ 
    be the function 
    defined by
    \[\tau_jh(i)= \begin{cases} h(i) &\text{ if }i\neq j\\
    h(j)-1 & \text{ if }i=j\end{cases}\]
    \item Let  
  $\tau^j:\{h\in \cH_{I,n}:h(i_k)<h(i_{k+1}), h(i_k)+1\in I\cup\{n\}\}\to \cH_{I,n}$
    be the function 
    defined by
    \[\tau^jh(i)= \begin{cases} h(i) & \text{ if }i\neq j\\
    h(j)+1 & \text{ if }i=j.\end{cases}\]
    We write $\tau_j^{j'}:=\tau^{j'}\tau_j$ for $j,j'$, and $\tau_{j,j'}:=\tau_{j'}\tau_{j}$ for $j< j'$.
\end{enumerate}
\end{definition}
For example, if we write $h=(h(1),\cdots,h(n))$, then for $h=(3,4,4,4)$, $\tau_1h=(2,4,4,4)$ and $\tau^1h=(4,4,4,4)$ while $\kappa_3h:\{1,2,4\}\to \{1,2,4\}$ is defined by $\kappa_3h(i)=4$ for all $i=1,2,4$.

\subsection{Deletion and blowup} \label{40}

In this section, we show that certain natural morphisms between generalized Hessenberg varieties are blowups under favorable circumstances. Recall that we write $\cH_{r,n}=\cH_{[r],n}$. 

For $h\in \cH_{r,n}$, there is a natural morphism
    \beq \label{41}
    \rho_j:X_h \longrightarrow X_{\kappa_jh},
    \eeq
    which deletes the subspace $V_j$ of dimension $j$. For $1\le j< r$, let $j_+=j+1$ and let $j_+=n$ for $j=r$ (or equivalently, let $r_+=n$). 
\begin{theorem} \label{42}
Let $h\in \cH_{r,n}$ and let $1\leq j\leq r$. Let $I=[r]-\{j\}$. The morphism $\rho_j$ in \eqref{41} sits in the following blowup diagram
\beq\label{124}\xymatrix{ E\ar[r] \ar[d] & X_h \ar[d]^{\rho_j}\\
Z\ar[r] & X_{\kappa_jh}}\eeq
if one of the following holds.
\begin{enumerate}
    \item $h(j)=h(j_+)$ and $h^{-1}(j)=\{j_0\}$: In this case, the blowup center is $Z=X_{\kappa_j\tau_{j_0}h}$
    of codimension $j_+-j+1$ and the exceptional divisor is $E=X_{\tau_{j_0}h}$. The equivariant Poincar\'e polynomials of these generalized Hessenberg varieties satisfy
\beq\label{51}
   \cF(h)=\cF(\kappa_jh)+q[j_+-j]_q\cF(\kappa_j\tau_{j_0}h). 
\eeq    
    \item $j<r$, $h(j+1)=h(j)+1> j+1$ and $h^{-1}(j)=\emptyset$: In this case, the blowup center is
    $Z=X_{\tau_{j+1}\kappa_j h}$ 
    of codimension 2 and the exceptional divisor is $E=X_{\tau_{j+1}h}$. The equivariant Poincar\'e polynomials of these generalized Hessenberg varieties satisfy
\beq\label{52}
   \cF(h)=\cF(\kappa_jh)+q\cF(\tau_{j+1}\kappa_jh). 
\eeq    
\end{enumerate}
\end{theorem}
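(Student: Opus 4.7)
The plan is to treat both cases uniformly: I will show that $\rho_j$ is a proper birational morphism of smooth projective varieties contracting a smooth divisor $E$ to a smooth subvariety $Z \subset X_{\kappa_j h}$ of codimension $c$, with $\rho_j|_E : E \to Z$ a $\PP^{c-1}$-bundle and $N_{E/X_h}$ restricting to $\sO(-1)$ on each fiber; the standard blowup criterion (Nakano--Fujiki) then identifies $\rho_j$ with the blowup $\Bl_Z X_{\kappa_j h} \to X_{\kappa_j h}$. Granted this, the formulas \eqref{51} and \eqref{52} follow from the usual blowup decomposition $\cF(\Bl_Z X) = \cF(X) + q[c-1]_q\,\cF(Z)$, which becomes $S_n$-equivariant via the equivariance of pullbacks, Chern class cap products, and pushforwards along closed embeddings (Propositions \ref{21}, \ref{24}, \ref{33}; note $\tau_{j_0} h \leq h$ in Case (1) and $\tau_{j+1} h \leq h$ in Case (2), so Proposition \ref{33} applies). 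Smoothness of all four varieties comes from Theorem \ref{4}.

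For Case (1), I will start by analyzing the fiber of $\rho_j$ over a point $(V_i)_{i \in I - \{j\}} \in X_{\kappa_j h}$: it consists of lines $V_j/V_{j-1}$ in $V_{j_+}/V_{j-1}$ containing the image $\overline{xV_{j_0}}$ of $xV_{j_0}$, since the remaining constraint $xV_j \subset V_{h(j)}$ is automatic (as $xV_{j_+} \subset V_{h(j_+)} = V_{h(j)}$). The crucial estimate is $\dim \overline{xV_{j_0}} \leq 1$: because $h^{-1}(j) = \{j_0\}$ and $h$ is monotone, $h(j_0 - 1) \leq j - 1$, so $xV_{j_0 - 1} \subset V_{h(j_0 - 1)} \subset V_{j-1}$, giving $\dim(xV_{j_0} \cap V_{j-1}) \geq j_0 - 1$. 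Thus $Z := \{xV_{j_0} \subset V_{j-1}\} = X_{\kappa_j \tau_{j_0} h}$ is precisely where the fiber is all of $\PP(V_{j_+}/V_{j-1}) \cong \PP^{j_+-j}$, while elsewhere $V_j/V_{j-1}$ is forced to equal $\overline{xV_{j_0}}$ and $\rho_j$ is an isomorphism. A direct dimension count using Theorem \ref{4} gives $c = j_+-j+1$. The exceptional divisor $E = \rho_j^{-1}(Z) = X_{\tau_{j_0} h}$ is cut out in $X_h$ as the zero locus of the section of the line bundle $(\cV_{j_0}/\cV_{j_0 - 1})^\vee \otimes (\cV_j/\cV_{j-1})$ induced by $x$, and this restricts to $\sO(-1)$ on each $\PP^{j_+-j}$-fiber because $\cV_{j_0}/\cV_{j_0-1}$ is pulled back from $Z$ while $\cV_j/\cV_{j-1}$ is the tautological subbundle of $\cV_{j_+}/\cV_{j-1}$ on the projective bundle $\PP(\cV_{j_+}/\cV_{j-1}) \to Z$.

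Case (2) is parallel but cleaner, with $V_{j+1}/V_{j-1}$ of rank two playing the role of $V_{j_+}/V_{j-1}$. The fiber of $\rho_j$ is the kernel of the induced map $\bar x : V_{j+1}/V_{j-1} \to V_{h(j)+1}/V_{h(j)}$, a single line where $\bar x \neq 0$ and all of $\PP^1$ on the locus $Z = \{\bar x = 0\} = X_{\tau_{j+1}\kappa_j h}$ of codimension two. The exceptional divisor $E = \rho_j^{-1}(Z) = X_{\tau_{j+1} h}$ is the zero locus of the section of $(\cV_{j+1}/\cV_j)^\vee \otimes (\cV_{h(j)+1}/\cV_{h(j)})$ induced by $x$, which restricts to $\sO(-1)$ on the $\PP^1$-fibers by the same pulled-back-versus-tautological dichotomy ($\cV_{j+1}/\cV_j$ becomes $\sO(1)$ as a quotient of the trivial rank-two bundle by $\sO(-1)$, while $\cV_{h(j)+1}/\cV_{h(j)}$ is pulled back from $Z$). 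The main technical obstacle in both cases is precisely the $\sO(-1)$ normal-bundle identification on fibers; with it established, the blowup criterion and the equivariant blowup formula combine to yield \eqref{51} and \eqref{52}.
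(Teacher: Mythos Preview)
Your proof is correct and follows essentially the same approach as the paper: both arguments invoke the Fujiki--Nakano criterion, identify $Z$ and $E$ as the indicated generalized Hessenberg varieties, verify that $\rho_j$ is an isomorphism off $Z$ via the reconstruction of $V_j$ (your fiber analysis is the same computation phrased contrapositively), and compute $N_{E/X_h}$ as the line bundle $(\cV_{j_0}/\cV_{j_0-1})^\vee\otimes(\cV_j/\cV_{j-1})$ in Case~(1) and $(\cV_{j+1}/\cV_j)^\vee\otimes(\cV_{h(j)+1}/\cV_{h(j)})$ in Case~(2), checking it restricts to $\sO(-1)$ on fibers. The only cosmetic difference is that the paper also computes $N_{Z/X_{\kappa_jh}}$ explicitly to identify $E\cong\PP(N_{Z/X_{\kappa_jh}})$, whereas you use a formulation of the criterion that bypasses this; both are standard.
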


\medskip
\begin{example}\label{121}
	Let $h:\{1,2,4\}\to \{1,2,4\}$ be a generalized Hessenberg function with $h(1)=2$ and $h(2)=h(4)=4$. 
	Then, 
	\[\rho_2:X_h\lra X_{\kappa_2h}=\PP^3, \quad (V_1\subset V_2\subset \C^4)\mapsto (V_1\subset \C^4)\]
	is bijective over $\PP^3-Z$, where
	\[Z:=X_{\kappa_2\tau_1h}=\{[1:0:0:0],[0:1:0:0],[0:0:1:0],[0:0:0:1]\},\]
	since $V_2$ can be uniquely reconstructed by the equality $V_2=V_1+xV_1$. On the other hand, fibers over $Z$ are $\PP(\C^4/V_1)\cong\PP^2$, since $V_1=xV_1$. In fact, $\rho_2$ is the blowup of $\PP^3$ along $Z$. See the proof of Theorem~\ref{42} below.
	
	By the blowup formula~\eqref{53} below, 
	we have
	\beq\label{131}
	\cF(h)=\cF(\kappa_2h)+q[2]_q\cF(\kappa_2\tau_1h) =[4]_q\mathsf{h}_4+q[2]_q\mathsf{h_3}\mathsf{h_1}\eeq 
	where the second equality holds by \eqref{23b} and Proposition~\ref{34}.
	
	Using the notation right below Definition \ref{130}, upon multiplying $[3]_q!$, \eqref{131} gives us the equality  
	\[[3]_q\cF(2,4,4,4)=\cF(4,4,4,4)+q[2]_q\cF(1,4,4,4)\]
	for Hessenberg varieties 
	by \eqref{32} 
	because
	\[\rho_3:X_{(2,4,4,4)}\lra X_h\]
	is a $\PP^1$-bundle and 
	\[\rho_2\circ \rho_3:\quad X_{(4,4,4,4)}\cong \Fl(4)\lra \PP^3 \and X_{(1,4,4,4)}\lra Z\] 
	are $\Fl(3)$-bundles by Theorem~\ref{26}.
\end{example}

\medskip

\noindent\emph{Proof of Theorem~\ref{42}}.
The assertions on the codimensions follow directly from Theorem \ref{4}.
By the Fujiki-Nakano criterion~\cite{FuNa}, it suffices to show the following to see that \eqref{124} is a blowup diagram.
\begin{enumerate}
		\item[(a)] $\rho_j$ is an isomorphism over $X_{\kappa_jh}- Z$,
		\item[(b)] $E\cong \PP(N_{Z/X_{\kappa_jh}})$ where $N_{Z/X_{\kappa_jh}}$ is the normal bundle to $Z$ in $X_{\kappa_jh}$,
		\item[(c)] $E$ is a Cartier divisor in $X_h$ with $\sO_{X_h}(E)|_E\cong \sO_{\PP(N_{Z/X_{\kappa_jh}})}(-1)$.
	\end{enumerate}
	
(1) Let $Z:=X_{\kappa_j\tau_{j_0}h}$, $E:=X_{\tau_{j_0}h}$,  $X:=X_{\kappa_jh}$ and $\tX:=X_h$. 
Under the assumptions of (1), $\rho_j$ is an isomorphism over $X-Z$, since $V_j$ can be uniquely reconstructed by the equality $V_j=V_{j-1}+xV_{j_0}$. Indeed, we have $\dim(V_{j-1}+xV_{j_0})=j$ on $X-Z$, since the conditions
\[(\kappa_jh)(j_0-1)=h(j_0-1)< j \and (\kappa_j\tau_{j_0}h)(j_0)=j-1\]
imply that 
$xV_{j_0}\cap V_{j-1}\supset xV_{j_0-1}$ on $X$, and $xV_{j_0}\subset V_{j-1}$ only on $Z$, so that 
\[V_{j-1}\cap xV_{j_0}=xV_{j_0-1} \quad \text{ on }X-Z.\]
On the other hand, $V_{j-1}+xV_{j_0}\subset V_j$ on $\tX$, since $h(j_0)=j$.

By Theorem \ref{26}, we have an isomorphism
$E=X_{\tau_{j_0}h}\cong \PP(\cV_{j_+}/\cV_{j-1})|_Z.$
Since $Z\subset X$ is the zero locus of the section of the vector bundle 
\beq\label{44} (\cV_{j_0}/\cV_{j_0-1})^*\otimes (\cV_{j_+}/\cV_{j-1})\eeq
of rank $j_+-j+1$ defined by 
$$x:\cV_{j_0}/\cV_{j_0-1}\lra \cV_{j_+}/\cV_{j-1},$$
$x$ is a transversal section as $Z$ is a smooth subvariety of codimension $j_+-j+1$ by Theorem \ref{4}. Hence the normal bundle $N_{Z/X}$ to $Z$ in $X$ is \eqref{44} restricted to $Z$. 
Since $(\cV_{j_0}/\cV_{j_0-1})^*$ is a line bundle, we have isomorphisms 
$$\PP N_{Z/X}\cong \PP(\cV_{j_+}/\cV_{j-1})|_Z\cong E,$$
$$\sO_{\PP N_{Z/X}}(-1)=(\cV_{j_0}/\cV_{j_0-1})^*\otimes\sO_{\PP (\cV_{j_+}/\cV_{j-1})|_Z}(-1).$$

By the same argument, the normal bundle $N_{E/\tX}$ to $E$ in $\tX$ is the line bundle
\beq\label{45} 
(\cV_{j_0}/\cV_{j_0-1})^*\otimes (\cV_{j}/\cV_{j-1})
=(\cV_{j_0}/\cV_{j_0-1})^*\otimes \sO_{\PP(\cV_{j_+}/\cV_{j-1})|_Z}(-1)
=\sO_{\PP N_{Z/X}}(-1).\eeq
This implies that $\rho_j:\tX\to X$ is the blowup along $Z$. 

By the blowup formula in \cite[\S6.7]{Ful} or \cite{GrHa}, 
we have an isomorphism
\beq\label{53}
A^*(\tX)\cong \rho_j^*A^*(X)\oplus \jmath_*A^*(E)/
\rho_j^*\imath_*A^*(Z)\eeq 
where $d+1$ is the codimension of $Z$ in $X$, and $\imath:Z\to X$ and $\jmath:E\to \tX$ denote the inclusion. Since $A^*(E)\cong A^*(Z)\otimes A^*(\PP^d)$ by \cite[Theorem 3.3]{Ful}, we find that  \eqref{53} is an isomorphism of $S_n$-representations by Propositions \ref{21},  \ref{33}, \eqref{44} and \eqref{45}. 
Upon applying the Frobenius characteristic, we obtain the formula \eqref{51}.

(2) Let $Z:=X_{\tau_{j+1}\kappa_j}h$, $X:=X_{\kappa_jh}$, $E:=X_{\tau_{j+1}h}$ and $\tX:=X_h$. 
The same argument as in (1) shows that $\rho_j$ is an isomorphism over $X-Z$, since $V_j$ can be uniquely reconstructed by the equality $V_j=V_{j+1}\cap x^{-1}V_{h(j+1)-1}$.

By Theorem \ref{26}, $E\cong \PP(\cV_{j+1}/\cV_{j-1})|_Z$. 
As in (1) above, the normal bundle $N_{Z/X}$ to $Z$ in $X$ is 
\[ (\cV_{j+1}/\cV_{j-1})^*\otimes (\cV_{h(j)+1}/\cV_{h(j)})|_Z\]
which is isomorphic to
\beq\label{46} (\cV_{j+1}/\cV_{j-1})\otimes \det (\cV_{j+1}/\cV_{j-1})^* \otimes (\cV_{h(j)+1}/\cV_{h(j)})|_Z\eeq
since $(\cV_{j+1}/\cV_{j-1})^*$ is a rank 2 bundle. Since $\det (\cV_{j+1}/\cV_{j-1})^* \otimes (\cV_{h(j)+1}/\cV_{h(j)})$ is a line bundle, we have an isomorphism
$$E\cong \PP(\cV_{j+1}/\cV_{j-1})|_Z\cong \PP N_{Z/X}.$$

The normal bundle $N_{E/\tX}$ to $E$ in $\tX$ is 
$$(\cV_{j+1}/\cV_{j})^*\otimes (\cV_{h(j)+1}/\cV_{h(j)})|_E$$
which is isomorphic to 
\beq\label{47}
\sO_{\PP(\cV_{j+1}/\cV_{j-1})}(-1)\otimes \det (\cV_{j+1}/\cV_{j-1})^* \otimes (\cV_{h(j)+1}/\cV_{h(j)})|_E\eeq
since $\O_{\PP(\cV_{j+1}/\cV_{j-1})}(-1)=\cV_j/\cV_{j-1}$ and 
$$\cV_j/\cV_{j-1}\otimes \cV_{j+1}/\cV_{j}\cong \det (\cV_{j+1}/\cV_{j-1}).$$
Comparing \eqref{46} and \eqref{47}, we find that
$$N_{E/\tX}\cong \sO_{\PP N_{Z/X}}(-1).$$
This proves that $\rho_j$ is the blowup along $Z$ with exceptional divisor $E$. 

The proof of \eqref{52} is the same as that of \eqref{51} and we omit it. 
\hfill$\square$

\bigskip

\begin{corollary}[Case $j=r$]\label{48}
    Let $h\in \cH_{r,n}$. Suppose that $h(r)=n$.
    \begin{enumerate}
        \item If $h^{-1}(r)=\emptyset$, then $\rho_r$ is the projectivized vector bundle $$\PP(\sO^{\oplus n}_{X_{\kappa_rh}}/\cV_{r-1})\lra X_{\kappa_rh},$$
and we have            
\beq\label{73a} \cF(h)=[n-r+1]_q \cF(h|_{[r-1]\cup\{n\}}).\eeq
        \item If $h^{-1}(r)=\{j\}$, then $\rho_r$ is the blowup along the subvariety $X_{\kappa_r\tau_{j}h}$
        of codimension $n-r+1$, and we have 
\beq\label{74}\cF(h)=\cF(((\tau^j)^{n-r}h)|_{[r-1]\cup\{n\}})+q[n-r]_q \cF((\tau_jh)|_{[r-1]\cup\{n\}}).\eeq 
        \item If $h$ is strictly increasing on $\{i~:~ h(i)<n\}$ and irreducible, then the forgetful morphism 
 \beq\label{49} \rho:X_h\lra \PP^{n-1}, \quad (V_i)_{i\in [r]}\mapsto V_1\eeq
        is the composition $\rho_2\circ\cdots \circ \rho_r$ where each $\rho_i$ is either a projective bundle or a blowup along a generalized Hessenberg variety $X_{h'}$ for some $h'\in \cH_{r',n}$ with $r'<r$. 
    \end{enumerate}
\end{corollary}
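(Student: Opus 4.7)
My plan is to derive (1) and (2) as direct specializations of Theorems~\ref{26} and~\ref{42}, respectively, and to prove (3) by descending induction on $r$.

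For (1), with $h(r)=n$ and $h^{-1}(r)=\emptyset$, the hypotheses of Theorem~\ref{26} are satisfied at $k=r$: taking $i_{k-1}=r-1$, $i_k=r$, $i_{k+1}=n$, one has $h(i_k)=h(i_{k+1})=n$ (using $h(n)=n$) and $h^{-1}(i_k)=\emptyset$. Since $i_k=i_{k-1}+1$, Theorem~\ref{26} identifies $\rho_r$ with the projective bundle $\PP(\cV_n/\cV_{r-1})\cong \PP(\sO^{\oplus n}_{X_{\kappa_rh}}/\cV_{r-1})$ over $X_{\kappa_rh}=X_{h|_{[r-1]\cup\{n\}}}$, and \eqref{30} gives \eqref{73a}.

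For (2), I apply Theorem~\ref{42}(1) with $(j,j_+,j_0)=(r,n,j)$. The hypothesis $h(j)=h(j_+)$ reads $h(r)=h(n)=n$, which holds, so \eqref{51} yields
\[\cF(h)=\cF(\kappa_rh)+q[n-r]_q\cF(\kappa_r\tau_jh).\]
The identification $\kappa_r\tau_jh=(\tau_jh)|_{[r-1]\cup\{n\}}$ is immediate since $\tau_j$ removes the sole preimage of $r$, so $(\tau_jh)^{-1}(r)=\emptyset$. The identification $\kappa_rh=((\tau^j)^{n-r}h)|_{[r-1]\cup\{n\}}$ follows by direct comparison of values: both sides send $j\mapsto n$, fix $n$, and agree with $h$ on $[r-1]-\{j\}$ (using $h^{-1}(r)=\{j\}$, so $h(i)\neq r$ for $i\neq j$); the iterated $\tau^j$ is interpreted inside the extended class $\cH_n$ via the associated $\tilde h$, where each intermediate bumping of $h(j)$ is meaningful.

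For (3), I induct downward on $r$. The base case $r=1$ forces $h(1)=n$ by irreducibility, whence $X_h=\PP^{n-1}$ and $\rho$ is the identity. For the inductive step, I first observe $h(r)=n$: indeed $h(r)\in[r]\cup\{n\}$, $h(r)\ge r$, and irreducibility excludes $h(r)=r$. Strict increase of $h$ on $\{i:h(i)<n\}$ forces $|h^{-1}(r)|\le 1$, so exactly one of (1) or (2) applies to $\rho_r$, presenting it either as a projective bundle or as a blowup along the generalized Hessenberg variety $X_{\kappa_r\tau_jh}\in\cH_{r-1,n}$. It then remains to verify that $\kappa_rh\in\cH_{r-1,n}$ inherits the hypotheses of (3): irreducibility follows from $\kappa_rh(i)\in\{h(i),n\}$ together with $h(i)>i$; strict increase on $\{i:\kappa_rh(i)<n\}$ is preserved because for any such $i$ one has $i\notin h^{-1}(r)$, so $\kappa_rh(i)=h(i)<n$ and strict increase descends from $h$. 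The inductive hypothesis then decomposes the residual map $X_{\kappa_rh}\to\PP^{n-1}$ as $\rho_2\circ\cdots\circ\rho_{r-1}$, completing (3). The main obstacle is exactly this bookkeeping—in particular, confirming the monotonicity survives the promoted value $\kappa_rh(j)=n$ in the blowup case—together with the notational care needed in (2) to interpret $(\tau^j)^{n-r}h$ via the extension $\tilde h\in\cH_n$.
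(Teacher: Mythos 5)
Your proposal is correct and follows essentially the same route as the paper: parts (1) and (2) are obtained by specializing Theorem~\ref{26} and Theorem~\ref{42}(1) at $j=r$ (with $j_+=n$) together with the identifications $\kappa_r h=((\tau^j)^{n-r}h)|_{[r-1]\cup\{n\}}$ and $\kappa_r\tau_j h=(\tau_j h)|_{[r-1]\cup\{n\}}$, and part (3) by iterating $\rho_r$ after checking that strict increase (hence $|h^{-1}(r)|\le 1$) and irreducibility (hence $h(r)=n$) are preserved under $\kappa_r$. You in fact spell out bookkeeping the paper leaves implicit, but the argument is the same.
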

\begin{proof}
(1) follows from Theorem \ref{26} 
and the fact that $\kappa_rh$ is equal to the restriction $h|_{[r-1]\cup\{n\}}$ of $h$ to $[r-1]\cup\{n\}$. 
(2) follows from Theorem \ref{42} and the equalities $\kappa_rh=(\tau^j)^{n-r}h|_{[r-1]\cup\{n\}}$ and $\kappa_r\tau_jh=(\tau_jh)|_{[r-1]\cup\{n\}}$.

If $h$ is strictly increasing on $[r]-h^{-1}(n)$, either (1) or (2) applies for $\rho_r$ and $\kappa_rh$ is also strictly increasing on $[r-1]-(\kappa_rh)^{-1}(n)$. The formulas \eqref{73a} and \eqref{74} follow from \eqref{53}. 
\end{proof}    

\smallskip
\begin{example}\label{50} Let $h_1:[n]\to [n]$, $h_1(i)=\min\{i+1,n\}$. The forgetful morphism \eqref{49} factors as 
\[\rho_2\circ \cdots \circ \rho_{n-1}:X_{h_1}=X_{h^{(n-1)}}\lra X_{h^{(n-2)}}\lra \cdots 
\longrightarrow X_{h^{(1)}}=\PP^{n-1}\]
into the blowups $\rho_{j+1}:X_{h^{(j+1)}}\xrightarrow{} X_{h^{(j)}}$
for $1\leq j \leq n-2$, where $h^{(j)}=\kappa_{j+1}h^{(j+1)}\in \cH_{j,n}$ is defined by 
$$h^{(j)}(i)=\left\{ \begin{matrix} i+1 & \text{ for} &1\le i<j\\
n & \text{ for} &i=j.\end{matrix}\right.$$
The blowup center of $\rho_{j+1}$ is the disjoint union of $\binom{n}{j}$ copies of the Hessenberg variety corresponding to the Hessenberg function $[j]\to [j]$ defined by $i\mapsto \min\{i+1,j\}$.

The above blowup sequence is exactly the blowup construction of the permutohedral variety \cite{Pro,Kap} and hence $X_{h_1}$ is isomorphic to the permutohedral variety.

Let $Q_n:=\cF(h_1)$ be the equivariant Poincar\'e polynomial of $X_{h_1}$ for $h_1\in\cH_n$. Set $Q_1:=1$. Applying \eqref{51} to $\rho_{j+1}$ above, we have
\beq\label{54}
    \begin{split}
        \cF(h^{(j+1)})&=\cF(h^{(j)})+q[n-j-1]_q \cF(\kappa_{j+1}\tau_jh)\\
        &=\cF(h^{(j)})+q[n-j-1]_q\mathsf{h}_{n-j}Q_j,
    \end{split}
\eeq 
where $\mathsf{h}_{n-j}$ denotes the Frobenius characteristic of the trivial representation of $S_{n-j}$. 
From \eqref{54}, we obtain the recursive formula for $Q_n$: 
\beq\label{55}
Q_n=[n]_q+q\sum_{j=1}^{n-2}[n-j-1]_q \mathsf{h}_{n-j}Q_j.\eeq
\end{example}

\begin{remark}
The Hessenberg variety $X_{h_1}$ in Example \ref{50} is also isomorphic to the Losev-Manin space $\overline{LM}_n$ in \cite{LM} of $n$-pointed chains of projective lines, which is just one of many compactifications of the moduli space ${M}_{0,n+2}$ of $n+2$ points in $\PP^1$ up to the automorphism group action of $\PP^1$.  
In \cite{Has}, Hassett constructed a moduli theoretic compactification $\overline{M}_A$ of $M_{0,n+2}$ for any sequence $A=(a_1,a_2,\cdots, a_{n+2})$ of rational numbers with $0< a_i\le 1$, $\sum_i a_i> 2$. The Losev-Manin space $\overline{LM}_n$ is actually the Hassett moduli space $\overline{M}_{(1,1,\epsilon,\cdots,\epsilon)}$ for $\epsilon>0$ sufficiently small. The blowups in Example \ref{50} are actually 
the \emph{wall crossings} of the moduli spaces $$\overline{M}_{(1,\delta,\epsilon,\cdots,\epsilon)}$$
as $\delta$ varies from $1$ to $1-n\epsilon$. 

Techniques in this paper apply to the study of the cohomology of various compactifications of $M_{0,n}$. 
See, for instance, \cite{CKL} for the study of $S_n$-representations on the cohomology of $\overline{M}_{0,n}=\overline{M}_{(1,1,\cdots,1)}$. 
\end{remark}

\subsection{An elementary proof of the Shareshian-Wachs conjecture}\label{60} 
In this subsection, we prove 
 the Shareshian-Wachs conjecture in \cite{SW}
 as an easy application of Theorem \ref{42}. 
Our proof is elementary in the sense that we only use classical algebraic geometry in  \cite{Hart, Ful, GrHa} and the arguments are straightforward. 

The Shareshian-Wachs conjecture relates the equivariant Poincar\'e polynomial $\cF(h)$ in \eqref{23a} of a Hessenberg variety $X_h$ with the chromatic quasi-symmetric function $\csf_q(h)$ of the graph $\Gamma_h$ associated to $h$.

\begin{definition}\label{61}
Let $h:[n]\to [n]$ be a Hessenberg function. The \emph{graph} $\Gamma_h$ \emph{associated to} $h$ consists of the set of vertices $\V(\Gamma_h)=[n]$ and the set of edges 
$$\E(\Gamma_h)=\{(i,j)\,:\, 1\le i<j\le h(i)\}.$$
A proper coloring of $\Gamma_h$ is a map $\gamma:\V(\Gamma_h)\to \N$ such that $\gamma(i)\ne \gamma(j)$ whenever $(i,j)\in \E(\Gamma_h)$. 
The \emph{chromatic quasi-symmetric function} of $\Gamma_h$ is defined as 
   \[\csf_q(h)=\sum_{\substack{\gamma:\V(\G_h)\to\N\\\text{proper}}}q^{\asc_{h}(\gamma)}x_{\gamma} \in \Lambda^n[q]\]
    where $x_{\gamma}:=\prod_{i=1}^n x_{\gamma(i)}$ and 
    $$\asc_h(\gamma)=|\{(i,j)\in \E(\G_h)\,:\, \gamma(i)<\gamma(j)\}|.$$
Here  $\Lambda^n$ denotes the subgroup of $\varprojlim_m \Z[x_1,\cdots,x_m]^{S_m}$ which consists of homogeneous symmetric functions of degree $n$ and $|\cdot |$ denotes the number of elements. 
\end{definition}

The celebrated \emph{Stanley-Stembridge conjecture} in \cite{SS} asks if all the coefficients of the chromatic quasi-symmetric function $\csf_q(h)$ are positive as a polynomial in $q$ and the elementary symmetric functions. 

Let $\omega:\Lambda \to\Lambda $ denote the involution of the ring $\Lambda=\bigoplus_{m}\Lambda^m$ sending a Schur function to its transpose.
This is a graded ring isomorphism. We also denote by $\omega:\Lambda[q] \xrightarrow{}\Lambda[q]$ its natural extension to $\Lambda[q]$, which sends $aq^i$ to $\omega(a)q^i$ for $a\in \Lambda$ and $i\geq0$. 

\begin{theorem}[Shareshian-Wachs conjecture]\label{62} \cite{SW, BC, GP2}
$\omega\cF(h)=\csf_q(h).$
\end{theorem}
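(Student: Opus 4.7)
The plan is to exhibit a three-term linear recursion, the \emph{modular law}, that is satisfied by $\cF$ on the geometric side and by $\csf_q$ on the combinatorial side, and then check that the two sides agree on enough base cases to pin down the recursion.

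To establish the geometric modular law (Proposition \ref{64}), I would exploit the two blowup identities \eqref{51} and \eqref{52} in Theorem \ref{42}. Each writes $\cF(h) - \cF(\kappa_j h)$ as a $q$-multiple of the Poincar\'e polynomial of a simpler generalized Hessenberg variety. Applying these formulas to pairs of Hessenberg functions that agree except at a single coordinate where they differ by one, and eliminating the common $\cF(\kappa_j h)$ term, should yield a relation
\[\alpha(q)\,\cF(h_+) + \beta(q)\,\cF(h_0) + \gamma(q)\,\cF(h_-) = 0,\]
where $h_-, h_0, h_+$ are three Hessenberg functions agreeing off of one entry at which they take three consecutive values and $\alpha, \beta, \gamma \in \Z[q]$ are explicit. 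Propositions \ref{21}, \ref{24} and \ref{33} guarantee that every term carries a natural $S_n$-action and that the morphisms producing the relation are $S_n$-equivariant, so this is an identity in the representation ring.

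On the combinatorial side, $\csf_q$ is known (Guay-Paquet) to satisfy the same modular law, via a bijective argument on proper colorings that swaps values across the one edge by which the three graphs $\Gamma_{h_-}, \Gamma_{h_0}, \Gamma_{h_+}$ differ. Since $\omega$ is a ring involution, applying it to the geometric modular law gives a recursion for $\omega\cF$ identical to the recursion for $\csf_q$.

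For the base case, take $h \equiv n$: then $X_h = \Fl(n)$, so $\cF(h) = [n]_q!\cdot \mathsf{h}_n$ by \eqref{23c}, while $\Gamma_h = K_n$, and a direct count of proper colorings gives $\csf_q(h) = [n]_q!\cdot \mathsf{e}_n$, matching $\omega([n]_q!\cdot \mathsf{h}_n)$. When $h(i) = i$ for some $i$, Proposition \ref{34} produces a product formula for $\cF(h)$ that matches the multiplicativity of $\csf_q$ on the disjoint union $\Gamma_h = \Gamma_{h'}\sqcup \Gamma_{h''}$ coming from the corresponding vertex split. An induction on $\sum_i (h(i)-i)$, driven by the modular law and anchored at these base cases, then extends the identity $\omega\cF(h) = \csf_q(h)$ to every $h \in \cH_n$. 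The hardest part is the first step: one must verify that the modular law, derived from Theorem \ref{42}, actually covers \emph{every} irreducible $h$ not already in the base case so that the induction closes without gaps; this is precisely the content of Proposition \ref{64}, after which the remaining combinatorics is routine.
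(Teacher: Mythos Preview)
Your approach matches the paper's: prove the modular law for $\cF$ from the blowup identities of Theorem~\ref{42} (this is exactly Proposition~\ref{64}), invoke the known modular law for $\csf_q$, check the multiplicative case via Proposition~\ref{34} and the base case $h\equiv n$ via \eqref{23c}, and conclude by uniqueness.

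One caveat: your ``induction on $\sum_i(h(i)-i)$'' does not work as stated. The modular law $[2]_q\cF(h_0)=\cF(h_+)+q\cF(h_-)$ relates $h_0$ to a \emph{larger} $h_+$ and a smaller $h_-$, while your anchor $h\equiv n$ sits at the \emph{top} of this partial order, so neither direction of a naive one-parameter induction closes. The paper sidesteps this by citing Theorem~\ref{63} (which is \cite[Theorem~1.1]{AN}): the three conditions uniquely determine the function, and the reduction algorithm proving this is more delicate than a monotone induction. You correctly locate the hard geometric step in Proposition~\ref{64}, but the ``routine combinatorics'' you allude to is really this uniqueness theorem, and you should cite it rather than sketch an induction that does not quite run.
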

This is a beautiful theorem relating a purely combinatorial object $\G_h$ with the geometric object $X_h$. In \cite{BC}, Brosnan and Chow proved Theorem \ref{62} by studying degeneration and monodromy. In \cite{GP2}, Guay-Paquet proved this by using the Hopf algebra on Dyck paths. 

Our proof of Theorem \ref{62} is based on the following.
\begin{theorem}\cite[Theorem 1.1]{AN}\label{63}
$\csf_q(h)$ is the unique function satisfying 
\begin{enumerate}
\item the modular law in Proposition \ref{64} below,
\item $\csf_q(h)=\csf_q(h') \csf_q(h'')$ in the notation of Theorem \ref{4} (2), if $h(i)=i$ for some $i\in [n]$, and
\item $\csf_q(h)=[n]_q!\mathsf{e}_n$ if $h(i)=n$ for all $i\in [n]$. 
\end{enumerate}
\end{theorem}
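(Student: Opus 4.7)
The plan is to prove Theorem~\ref{63} in two stages: establish that the combinatorial $\csf_q(h)$ from Definition~\ref{61} satisfies the three listed properties, and then show these properties determine a unique function on $\bigsqcup_n \cH_n$.

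For existence, properties (2) and (3) follow by direct inspection of $\Gamma_h$. When $h\equiv n$, the graph is complete, so a proper coloring is an injection $\gamma\colon [n]\to\N$; grouping injections by image $S\subset \N$ and using the classical identity $\sum_{\sigma\in S_n} q^{\asc(\sigma)}=[n]_q!$ gives one factor of $[n]_q!$ per image, yielding $\csf_q(h)=[n]_q!\,\mathsf{e}_n$. When $h(i)=i$ for some $i$, monotonicity of $h$ forces every edge of $\Gamma_h$ to lie entirely in $[i]$ or in $\{i+1,\dots,n\}$, so $\Gamma_h=\Gamma_{h'}\sqcup \Gamma_{h''}$ in the notation of Theorem~\ref{4}(2); proper colorings and ascents decouple, and the product in $\Lambda$ matches the induction-product structure of the Frobenius characteristic, giving $\csf_q(h)=\csf_q(h')\csf_q(h'')$. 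Property (1) is established separately as Proposition~\ref{64}, which I take as given.

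For uniqueness, suppose $\Phi,\Psi\colon \bigsqcup_n \cH_n \to \Lambda[q]$ both satisfy (1)--(3). I would show $\Phi=\Psi$ by induction on $n$ and, within each fixed $n$, on the statistic $N(h):=|\E(\Gamma_h)|=\sum_{i=1}^n(h(i)-i)$. The case $n=1$ is forced by (3). For the inductive step, fix $h\in \cH_n$: if $h(i)=i$ for some $i<n$, property (2) reduces $\Phi(h)$ and $\Psi(h)$ to products over Hessenberg functions of smaller size, dispatched by the outer induction; if $h\equiv n$, property (3) applies directly; otherwise $h$ is irreducible with some $h(i)<n$, and I would locate a modular-law triple $(h_-,h_0,h_+)$ from Proposition~\ref{64} containing $h$ in which each of the other two members is either reducible or satisfies $N<N(h)$. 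The modular law then expresses $\Phi(h)$ as a linear combination of $\Phi$-values on strictly simpler arguments, and identically for $\Psi$, so the inductive hypothesis forces equality.

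The main obstacle is the combinatorial guarantee in the last step: for every irreducible $h$ with $h\not\equiv n$, one must produce a modular-law triple whose resolution solves for $\Phi(h)$ in terms of strictly simpler Hessenberg functions. I would handle this by encoding $h$ as its Dyck path $D_h$ and performing a case analysis on the leftmost index $i_0$ where $h$ deviates from the constant value $n$, producing a local move on $D_h$ that invokes the modular law with a guaranteed drop in $N$ on one companion and reducibility (or smaller $N$) on the other. Consistency across different reduction paths is automatic, since existence supplies an explicit solution $\csf_q$ to (1)--(3); thus uniqueness reduces to the combinatorial statement that the modular law, together with (2) and (3), generates enough relations to pin down every value.
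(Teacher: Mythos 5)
The paper itself does not prove this statement: Theorem~\ref{63} is cited verbatim from \cite[Theorem~1.1]{AN}, so you are attempting to supply a proof the paper takes as a black box. Your existence verification of (2) and (3) for $\csf_q$ is fine, but the uniqueness argument has a genuine gap. You claim that every irreducible $h\in\cH_n$ with $h\not\equiv n$ sits in a modular--law triple $(h_-,h_0,h_+)$ in which \emph{both} other members are reducible or have strictly smaller $N(h)=\sum_i(h(i)-i)$. This is false already for $h=h_1$, i.e.\ $h_1(i)=\min\{i+1,n\}$. Checking the conditions in Proposition~\ref{64} directly: $h_1$ is never the top element $\tau^{j_0}g$ or $\tau^jg$ of a modular--law triple (case~(1) would force $g(j_0)=j_0=g(j_0+1)$, impossible since $g(j_0+1)\ge j_0+1$; case~(2) would force $g(j+1)=j+1>j+1$, a contradiction), and it is never the bottom element either. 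Whenever $h_1$ is the middle element, the remaining two companions are $\tau_{j_0}h_1$ (or $\tau_{j+1}h_1$), which is reducible, and $\tau^{j_0}h_1$ (or $\tau^jh_1$), which is irreducible, has $N$ one larger than $N(h_1)$, and (for $n\ge 4$) is not the complete graph. So your inductive step never fires on $h_1$, and the induction does not close. A related issue: when $h$ is the middle element of a triple, solving $[2]_q\Phi(h)=\Phi(h_+)+q\Phi(h_-)$ for $\Phi(h)$ requires dividing by $[2]_q$, which you would need to justify (this is fine because $\Lambda[q]$ is a torsion--free $\Z[q]$-module, but it should be said); and more importantly it does not reduce to strictly smaller $N$.

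The actual Abreu--Nigro argument is subtler than an induction on $N$: one cannot descend one modular--law step at a time toward the base cases, because cases like $h_1$ sit in short cycles of relations (e.g.\ for $n=4$ the two triples through $h_1$ involving $(2,4,4,4)$ and $(3,3,4,4)$ substitute into one another to give a tautology). One needs either a different well-founded order on $\cH_n$ together with a downward recursion from $h\equiv n$ that solves for the \emph{bottom} element, or a careful analysis of the full linear system of modular--law relations to show it has unique solution given (2) and (3). Your Dyck-path case analysis is the right kind of idea, but as stated the key combinatorial claim it is meant to deliver is not true, so the proposal does not yet constitute a proof.
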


\noindent{\em Proof of Theorem~\ref{62}.}
Obviously Theorem \ref{62} is proved if we show that $\omega \cF(h)$ satisfies the three conditions in Theorem \ref{63}. By \eqref{23c}, we have $\omega \cF(h)=[n]_q! \mathsf{e}_n$ since $\omega \mathsf{h}_n=\mathsf{e}_n$. Also Proposition \ref{34} establishes (2) in Theorem \ref{63} for $\cF(h)$ and hence the same holds for $\omega \cF(h)$ because $\omega$ is a ring homomorphism. Therefore Theorem \ref{62} follows from the following proposition.
\hfill $\square$

\begin{proposition}[The modular law] \label{64} 
Let $h\in \cH_n$ and let $1\leq j<n$. \begin{enumerate}
    \item If $h(j)=h(j+1)$ and $h^{-1}(j)=\{j_0\}$, then
    \beq\label{69} [2]_q\cF(h)=\cF(\tau^{j_0}h)+q \cF(\tau_{j_0}h).\eeq
    \item If $h(j+1)=h(j)+1> j+1$ and $h^{-1}(j)=\emptyset$, then
    \beq\label{70} [2]_q\cF(h)=\cF(\tau^{j}h)+q \cF(\tau_{j+1}h).\eeq
\end{enumerate}
\end{proposition}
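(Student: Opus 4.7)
The plan is to derive Proposition~\ref{64} as a short combination of Theorems~\ref{42} and~\ref{26}. In each case, I apply Theorem~\ref{42} once to $h$ at index $j$ to obtain a blowup-type formula $\cF(h)=\cF(A)+q\,\cF(B)$, and apply Theorem~\ref{26} twice, to the perturbed functions $\tau^{\bullet}h$ and $\tau_{\bullet}h$ at index $j$, to obtain projective-bundle formulas $\cF(\tau^{\bullet}h)=[2]_q\cF(A)$ and $\cF(\tau_{\bullet}h)=[2]_q\cF(B)$; multiplying the blowup formula by $[2]_q$ and substituting then gives the modular law.

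For case~(1), Theorem~\ref{42}(1) applies directly to $h$ (with $j_+$ equal to $j+1$ or $n$, so that $[j_+-j]_q=1$), yielding
\[\cF(h)=\cF(\kappa_jh)+q\,\cF(\kappa_j\tau_{j_0}h).\]
Since $h(j_0)=j$ with $h(j)\geq j+1$ forces $j_0<j$, both $\tau^{j_0}h$ and $\tau_{j_0}h$ alter $h$ only at the index $j_0\neq j,j+1$, so $(\tau^{\bullet}h)^{-1}(j)=\emptyset$ while $\tau^{\bullet}h(j)=\tau^{\bullet}h(j+1)$ persists; this triggers Theorem~\ref{26} (with $i_k=j$, $i_{k\pm1}=j\pm1$), giving the factor $[2]_q$. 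The identity $\kappa_j\tau^{j_0}h=\kappa_jh$ is immediate from Definition~\ref{130}, since both functions send $j_0$ to $j+1$ (or to $n$ when $j=n-1$) and agree with $h$ elsewhere. This yields $\cF(\tau^{j_0}h)=[2]_q\cF(\kappa_jh)$ and $\cF(\tau_{j_0}h)=[2]_q\cF(\kappa_j\tau_{j_0}h)$, and hence \eqref{69}.

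For case~(2), Theorem~\ref{42}(2) gives
\[\cF(h)=\cF(\kappa_jh)+q\,\cF(\tau_{j+1}\kappa_jh).\]
The perturbations $\tau^jh$ and $\tau_{j+1}h$ act only at indices $j$ and $j+1$, producing $\tau^{\bullet}h(j)=\tau^{\bullet}h(j+1)$ (using $h(j+1)=h(j)+1$) and $(\tau^{\bullet}h)^{-1}(j)=\emptyset$, so Theorem~\ref{26} yields $\cF(\tau^jh)=[2]_q\cF(\kappa_j\tau^jh)=[2]_q\cF(\kappa_jh)$ and $\cF(\tau_{j+1}h)=[2]_q\cF(\kappa_j\tau_{j+1}h)=[2]_q\cF(\tau_{j+1}\kappa_jh)$. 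The final identity $\kappa_j\tau_{j+1}h=\tau_{j+1}\kappa_jh$ is the commutation of $\kappa_j$ and $\tau_{j+1}$, routine from Definition~\ref{130} since $h^{-1}(j)=\emptyset$ disables the $\kappa_j$-reassignment rule and $\tau_{j+1}$ acts away from the deleted index $j$. Substituting into the blowup formula and multiplying by $[2]_q$ gives \eqref{70}.

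The hard part will be routine bookkeeping: verifying that each perturbed function $\tau^{\bullet}h,\tau_{\bullet}h$ meets the hypotheses of Theorem~\ref{26} (empty preimage of $j$, equal values at $j,j+1$) and that the identifications $\kappa_j\tau^{j_0}h=\kappa_jh$, $\kappa_j\tau^jh=\kappa_jh$, and $\kappa_j\tau_{j+1}h=\tau_{j+1}\kappa_jh$ hold. Each is a short unwinding of Definition~\ref{130}. Conceptually, the $\tau$-perturbations are engineered precisely so that the blowup center in Theorem~\ref{42} becomes, after perturbation, the base of a $\PP^1$-bundle whose total space is the perturbed Hessenberg variety, turning the asymmetric blowup formula into the symmetric modular law.
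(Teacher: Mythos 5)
Your proof is correct and follows essentially the same route as the paper's: apply Theorem~\ref{42} to $h$ at index $j$ to get the blowup relation $\cF(h)=\cF(\kappa_jh)+q\,\cF(\kappa_j\tau_{j_0}h)$ (resp.\ $\cF(h)=\cF(\kappa_jh)+q\,\cF(\tau_{j+1}\kappa_jh)$), and then express $\cF$ of each $\tau$-perturbation of $h$ as $[2]_q$ times $\cF$ of the corresponding $\kappa_j$-reduction via Theorem~\ref{26}. The only cosmetic difference is in citation: the paper obtains $\cF(\tau_{j_0}h)=[2]_q\cF(\kappa_j\tau_{j_0}h)$ (resp.\ $\cF(\tau_{j+1}h)=[2]_q\cF(\kappa_j\tau_{j+1}h)$) directly from the exceptional-divisor description in Theorem~\ref{42}, while you re-derive it by applying Theorem~\ref{26} to the perturbed function; the two statements coincide since the exceptional divisor is precisely the $\PP^1$-bundle that Theorem~\ref{26} identifies. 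Your bookkeeping (that $j_0<j$ so the perturbation does not disturb the hypotheses of Theorem~\ref{26} at $j$, and the identities $\kappa_j\tau^{j_0}h=\kappa_jh$, $\kappa_j\tau^jh=\kappa_jh$, $\kappa_j\tau_{j+1}h=\tau_{j+1}\kappa_jh$) matches the paper's implicit checks.
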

\begin{proof}
(1) Notice that $\kappa_jh=\kappa_j\tau^{j_0}h$ under our assumption. By Theorem \ref{42} (1), we have
\beq\label{65} \cF(h)=\cF(\kappa_jh)+q\cF(\kappa_j\tau_{j_0}h),\quad \cF(\tau_{j_0}h)=[2]_q\cF(\kappa_j\tau_{j_0}h).\eeq
By Theorem \ref{26} and \eqref{30}, we have 
\beq\label{66} \cF(\tau^{j_0}h)=[2]_q\cF(\kappa_j\tau^{j_0}h)=[2]_q\cF(\kappa_jh).\eeq
It is easy to deduce \eqref{69} from \eqref{65} and \eqref{66}.

(2) By Theorem \ref{42} (2), we have 
\beq\label{67} \cF(h)=\cF(\kappa_jh)+q\cF(\kappa_j\tau_{j+1}h),\quad
\cF(\tau_{j+1}h)=[2]_q\cF(\kappa_j\tau_{j+1}h).\eeq
By Theorem \ref{26}, we have
\beq\label{68} \cF(\tau^jh)=[2]_q\cF(\kappa_j\tau^jh)=[2]_q\cF(\kappa_jh).\eeq
It is easy to deduce \eqref{70} from \eqref{67} and \eqref{68}.
\end{proof}

\begin{remark}
Theorem \ref{62} can also be proved by our Algorithm 1 in \S\ref{103}, instead of Theorem \ref{63}.  
By Proposition \ref{73}, we have the formulas \eqref{75} and \eqref{76}. It is straightforward to see that the same formulas hold for $\csf_q(h)$ by the modular law.
By applying Algorithm 1 to $\csf_q(h)$, Theorem \ref{63} is reduced to the statement about $h_1$ (Example \ref{50}) where the theorem is obvious. 

\end{remark}

\begin{remark} Our motivation for this paper is to apply the techniques developed in \cite{CKL} to Hessenberg varieties. 
While we were completing this paper, we learned that Proposition \ref{64} was proved by Precup and Sommers in \cite{PS} by a different method and the above proof of (1) was sketched in \cite[Example 3.5]{AN2} without details.  
See \cite{KL,HMS} for an analogous modular law for the twin manifolds of Hessenberg varieties \cite{AB}.
\end{remark}

In \S\ref{71} below, we present natural generalizations of the chromatic quasi-symmetric functions, the Stanley-Stembridge conjecture and the Shareshian-Wachs conjecture, adapted for generalized Hessenberg functions. Then we prove the generalized Shareshian-Wachs conjecture.

\subsection{Modifications}\label{72}

In this subsection, we use Theorems \ref{26} and \ref{42} to find more relations among generalized Hessenberg varieties. These are used for new algorithms to compute the equivariant Poincar\'e polynomials of generalized Hessenberg varieties in \S\ref{91}. 

By Theorem \ref{42}, we have the following rational maps, which we call \emph{modifications}.
\begin{proposition}\label{73}
    Let $h\in \cH_{r,n}$. Suppose that there exists $1\leq j<r$ such that $h(j-1)<h(j)=h(j+1)\neq j+1$.
    \begin{enumerate}
        \item If $h^{-1}(j)=\emptyset$, then the rational map
        $X_h\dashrightarrow X_{\tau_jh}$
        which generically replaces $V_j$ by $V_{j+1}\cap x^{-1}V_{h(j+1)-1}$ can be resolved as 
        \[\xymatrix{ 
        X_h \ar@{-->}[rr]\ar[rd]_{\rho_j} && X_{\tau_{j}h}\ar[ld]^{\rho_j}\\
        &X_{\kappa_jh}=X_{\kappa_j\tau_jh}&}\]
        where
        \begin{enumerate}
            \item the first $\rho_j$ is a $\PP^1$-bundle,
            \item the second $\rho_j$ is the blowup along $X_{\tau_{j+1}\kappa_jh}$.
        \end{enumerate}
        In this case, we have 
\beq\label{75} \cF(h)=[2]_q\cF(\tau_jh)-q\cF(\tau_{j,j+1}h).\eeq
        \item If $h^{-1}(j)=\{j_0\}$, then the rational map
        $X_h\dashrightarrow X_{\tau^{j_0}_jh}$
        which generically replaces $V_j$ by $V_{j+1}\cap x^{-1}V_{h(j+1)-1}$ can be resolved as 
        \[\xymatrix{ 
        X_h \ar@{-->}[rr]\ar[rd]_{\rho_j} && X_{\tau^{j_0}_{j}h}\ar[ld]^{\rho_j}\\
        &X_{\kappa_jh}=X_{\kappa_j\tau^{j_0}_jh}&}\]
        where 
        \begin{enumerate}
            \item the first $\rho_j$ is the blowup along $X_{\kappa_j\tau_{j_0}h}$,
            \item the second $\rho_j$ is the blowup along $X_{\tau_{j+1}\kappa_jh}$.
        \end{enumerate}
    \end{enumerate}
    In this case, we have
\beq\label{76} \cF(h)=\cF(\tau^{j_0}_jh)-q\cF(\tau_{j,j+1}h)+q\cF(\tau_{j_0,j}h).\eeq
\end{proposition}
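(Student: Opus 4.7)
The plan is to verify the geometric pictures in both cases using Theorems~\ref{26} and~\ref{42}, then derive \eqref{75} and \eqref{76} by algebra. The key bookkeeping observation is that the hypothesis $h(j-1)<h(j)=h(j+1)\neq j+1$ forces $h(j)=:m\geq j+2$, so all the unit decrements $\tau_j$, $\tau_{j+1}$, $\tau^{j_0}$ at the relevant indices remain strictly above $j$. A direct computation from Definition~\ref{130} then yields $\kappa_j h=\kappa_j\tau_j h$ in Case~(1) and $\kappa_j h=\kappa_j\tau_j^{j_0}h$ in Case~(2), which explains why both descending $\rho_j$'s in each resolution land in the common target $X_{\kappa_j h}$.

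For Case~(1), $h$ itself satisfies the hypothesis of Theorem~\ref{26} at $j$ (namely $h(j)=h(j+1)$ and $h^{-1}(j)=\emptyset$), so the left $\rho_j$ is a $\PP^1$-bundle and $\cF(h)=[2]_q\cF(\kappa_j h)$. For the right $\rho_j$, I check that $\tau_j h$ satisfies the hypotheses of Theorem~\ref{42}(2): $\tau_j h(j+1)=m=\tau_j h(j)+1>j+1$ and $(\tau_j h)^{-1}(j)=\emptyset$; this identifies it as the blowup along $X_{\tau_{j+1}\kappa_j h}$ with $\cF(\tau_j h)=\cF(\kappa_j h)+q\cF(\tau_{j+1}\kappa_j h)$. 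Theorem~\ref{26} also applies to $\tau_{j,j+1}h$ at $j$ and gives $\cF(\tau_{j,j+1}h)=[2]_q\cF(\tau_{j+1}\kappa_j h)$; combining the three identities yields~\eqref{75}.

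For Case~(2), Theorem~\ref{42}(1) applies to $h$, producing the first $\rho_j$ as the blowup along $X_{\kappa_j\tau_{j_0}h}$ with $\cF(h)=\cF(\kappa_j h)+q\cF(\kappa_j\tau_{j_0}h)$. The function $\tau_j^{j_0}h$ satisfies Theorem~\ref{42}(2), because its value at $j_0$ has moved from $j$ to $j+1$, so $(\tau_j^{j_0}h)^{-1}(j)=\emptyset$, and $\tau_j^{j_0}h(j+1)=m=\tau_j^{j_0}h(j)+1$; this yields the blowup along $X_{\tau_{j+1}\kappa_j h}$ with $\cF(\tau_j^{j_0}h)=\cF(\kappa_j h)+q\cF(\tau_{j+1}\kappa_j h)$. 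To derive \eqref{76}, I then apply Theorem~\ref{42}(2) to $\tau_{j_0,j}h$ to obtain $\cF(\tau_{j_0,j}h)=\cF(\kappa_j\tau_{j_0}h)+q\cF(\tau_{j+1}\kappa_j\tau_{j_0}h)$ (using $\kappa_j\tau_{j_0,j}h=\kappa_j\tau_{j_0}h$), and Theorem~\ref{42}(1) to $\tau_{j,j+1}h$ to obtain $\cF(\tau_{j,j+1}h)=\cF(\tau_{j+1}\kappa_j h)+q\cF(\tau_{j+1}\kappa_j\tau_{j_0}h)$ (using $\kappa_j\tau_{j,j+1}h=\tau_{j+1}\kappa_j h$ and $\kappa_j\tau_{j_0}\tau_{j,j+1}h=\tau_{j+1}\kappa_j\tau_{j_0}h$); a short linear combination of these four identities delivers~\eqref{76}.

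Finally, the rational maps are described explicitly on the open locus where $xV_{j+1}\not\subset V_{m-1}$: there, $V_{j+1}\cap x^{-1}V_{m-1}$ is $j$-dimensional and satisfies the $\tau_j h$ (resp.\ $\tau_j^{j_0}h$) conditions, so it serves as the new $V_j$. The complement of this open locus is precisely the second blowup center, which is why the two $\rho_j$'s resolve the rational map. The main obstacle will be the careful bookkeeping of the compositions $\kappa_j$, $\tau_j$, $\tau^{j_0}$, $\tau_{j+1}$ and verifying, case by case, that each derived Hessenberg function appearing in the argument satisfies exactly the hypotheses needed to invoke Theorems~\ref{26} or~\ref{42}.
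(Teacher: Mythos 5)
Your proposal is correct and follows essentially the same route as the paper: the diagrams come from Theorems~\ref{26} and \ref{42} after the bookkeeping checks on $\kappa_j$, $\tau_j$, $\tau^{j_0}$, and the formulas \eqref{75}, \eqref{76} are obtained by exactly the same combination of \eqref{30}, \eqref{51} and \eqref{52} applied to $h$, $\tau_jh$ (resp.\ $\tau_j^{j_0}h$), $\tau_{j,j+1}h$ and $\tau_{j_0,j}h$ that the paper uses in \eqref{76a}--\eqref{76c}.
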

\begin{proof}
The diagrams in (a) and (b) are immediate from Theorems \ref{26} and \ref{42}. 
In (1), the formula \eqref{75} follows from \eqref{30} and \eqref{52} since $X_{\tau_{j,j+1}h}$ is a $\PP^1$-bundle over $X_{\tau_{j+1}\kappa_jh}$ by Theorem \ref{30}. 

In (2),  from \eqref{51} and from \eqref{52},  we have 
\beq\label{76a}\cF(h)-q\cF(\kappa_j\tau_{j_0}h) =\cF(\kappa_jh)=\cF(\tau^{j_0}_jh)-q\cF(\tau_{j+1}\kappa_jh).\eeq
Applying \eqref{51} with $h$ replaced by $\tau_{j,j+1}h$, we have 
\beq\label{76b}\cF(\tau_{j,j+1}h)=\cF(\tau_{j+1}\kappa_jh)+q\cF(\tau_{j+1}\kappa_j\tau_{j_0}h).\eeq
Applying \eqref{52} with $h$ replaced by $\tau_{j_0,j}h$, we have
\beq\label{76c}\cF(\tau_{j_0,j}h)=\cF(\kappa_j\tau_{j_0}h)+q\cF(\tau_{j+1}\kappa_j\tau_{j_0}h).\eeq
The formula \eqref{76} follows from \eqref{76a}, \eqref{76b} and \eqref{76c}.
\end{proof}
It is easy to see that the modular law \eqref{70} is equivalent to \eqref{75} by Corollary \ref{31}. 

We say that $h$ is \emph{modified} to $\tau_jh$ or $\tau_{j}^{j_0}h$ by the modifications at $j$ in Proposition \ref{73} (1) or (2).

One immediate advantage of these modifications is that they complement Corollary \ref{48} which 
reduces $X_h$ for $h\in \cH_{r,n}$ to $X_{\kappa_rh}$ with $\kappa_rh\in \cH_{r-1,n}$, when 
$\lvert h^{-1}(r)\rvert\le 1$. When $\lvert h^{-1}(r)\rvert\geq 2$, we can apply Proposition \ref{73} to modify $X_h$ to $X_{\hat{h}}$ with $| \hat{h}^{-1}(r) |\le 1.$ 

\begin{corollary} \label{77}
Let $h\in \cH_{r,n}$. Suppose that $h(r)=n$ and $\lvert h^{-1}(r)\rvert \geq2$. 
Let $\{r_i\}$ denote the sequence of integers defined recursively by $$r_0=r, \ \ r_i=\min h^{-1}(r_{i-1}) \ \ \text{ for }h^{-1}(r_{i-1})\ne \emptyset.$$ Let 
$j=\min\{i\,:\, |h^{-1}(r_{i})|\le 1\}.$ Then we have a modification 
$$X_h\dashrightarrow X_{h'}, \quad h'=\tau_{r_j}h \text{ or } \tau_{r_j}^{r_{j+1}}h$$
by Proposition \ref{73}. By repeating this process finitely many times, we obtain a dominant rational map 
$$X_h\dashrightarrow X_{\hat{h}}\ \ \text{with }\hat{h}(r)=n \text{ and } |\hat{h}^{-1}(r)|\le 1$$
which is a composition of the modifications in Proposition \ref{73}.
\end{corollary}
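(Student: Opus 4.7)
The proof is by iteration: I would apply Proposition \ref{73} at the position $p := r_j$, show the resulting $h'$ still satisfies the standing hypotheses, and iterate until $|\hat h^{-1}(r)|\le 1$, with termination ensured by a lexicographic monovariant.

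Step 1 (the sequence is strictly decreasing). Whenever $|h^{-1}(r_{i-1})|\ge 2$, I claim $r_i<r_{i-1}$: if instead $r_i=r_{i-1}$ then $h(r_{i-1})=r_{i-1}$, and the monotonicity of $h$ together with $h(k)\ge k$ would force $h^{-1}(r_{i-1})=\{r_{i-1}\}$, contradicting $|h^{-1}(r_{i-1})|\ge 2$. Hence $r_0>r_1>\cdots>r_j\ge 1$, so $j\le r$ is finite.

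Step 2 (hypotheses of Proposition \ref{73} hold at $p:=r_j$). Since $j\ge 1$, we have $1\le p<r$. The minimality $p=\min h^{-1}(r_{j-1})$ gives $h(p-1)<r_{j-1}=h(p)$. Because $|h^{-1}(r_{j-1})|\ge 2$, there is some $p'>p$ with $h(p')=r_{j-1}$, so by monotonicity $h(p+1)=r_{j-1}=h(p)$. For the condition $h(p)\ne p+1$: assuming $X_h$ is irreducible (otherwise I first apply Proposition \ref{34} to split $X_h$ into irreducible factors and argue on each), Theorem \ref{4}(2) gives $h(k)>k$ for every $k\in I$; then if $h(p)=p+1$, the second preimage $p'>p$ of $r_{j-1}=p+1$ satisfies $p<p'\le h(p')=p+1$, forcing $p'=p+1$ and hence $h(p+1)=p+1$, a fixed point, contradicting irreducibility. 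Finally, $|h^{-1}(p)|\le 1$ by choice of $j$.

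Step 3 (the modification). Proposition \ref{73} now yields a birational modification $X_h\dashrightarrow X_{h'}$ with $h'=\tau_{p}h$ if $h^{-1}(p)=\emptyset$, and $h'=\tau_{p}^{r_{j+1}}h$ if $h^{-1}(p)=\{r_{j+1}\}$. In both cases $h'$ agrees with $h$ on all positions greater than $p<r$, so $h'(r)=h(r)=n$. A direct check (using $r_{j-1}\ge r_j+2$ and, in case (2), $r_{j+1}<r_j$ in the irreducible case) shows that $h'$ remains irreducible, so the procedure may be iterated.

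Step 4 (termination). Define $\phi(h):=(h(r),h(r-1),\ldots,h(1))$ and order $\cH_{r,n}$ lexicographically by $\phi$. Each modification strictly decreases $h(p)$ while leaving every $h(k)$ with $k>p$ unchanged, so $\phi(h')<\phi(h)$ in this well-order, and the iteration terminates at some $\hat h$. Since Steps 2--3 produce a modification whenever $|h^{-1}(r)|\ge 2$ and $h(r)=n$, termination forces $|\hat h^{-1}(r)|\le 1$, and the composite rational map $X_h\dashrightarrow X_{\hat h}$ is dominant as a finite composition of birational modifications. The main subtlety is the verification of $h(p)\ne p+1$ in Step 2, which is precisely where one invokes the irreducibility reduction via Proposition \ref{34}.
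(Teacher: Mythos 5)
Your overall strategy is the same as the paper's: check that Proposition \ref{73} applies at the position $p=r_j$, modify, and iterate with a monovariant guaranteeing termination. Your Steps 1 and most of Step 2 are correct and in fact more detailed than the paper's two-line proof, and your termination argument (reverse-lexicographic order on $(h(r),\dots,h(1))$, which strictly drops because the largest position changed by either type of modification is $p$ and $h(p)$ decreases there) is a valid variant of the paper's monovariant, which instead observes that each modification either drops $\sum_i h(i)$ by one or keeps the sum and strictly increases $(h(1),\dots,h(r))$ lexicographically.

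The weak point is your verification of $h(p)\neq p+1$. The corollary assumes no irreducibility, and the proposed reduction, namely to ``split $X_h$ into irreducible factors by Proposition \ref{34} and argue on each,'' does not prove the statement as it stands: Proposition \ref{34} and Theorem \ref{4}(2) concern the product decomposition of the variety and its Poincar\'e polynomial, whereas the corollary asserts modifications of $X_h$ itself, i.e.\ Proposition \ref{73} applied to $h$ at the position $r_j$; transporting a modification of a factor back to a modification of $h$ is an extra compatibility you have not established. Fortunately no irreducibility is needed: if $h(p)=p+1=r_{j-1}$, then every $k\in h^{-1}(r_{j-1})$ satisfies $r_{j-1}-1=r_j\le k\le h(k)=r_{j-1}$, so $|h^{-1}(r_{j-1})|\ge 2$ (which holds by minimality of $j$, or by hypothesis when $j=1$) forces $h(r_{j-1})=r_{j-1}$; but $h(r_{j-1})=r_{j-2}>r_{j-1}$ for $j\ge 2$ (by your Step 1), and $h(r_0)=h(r)=n>r$ for $j=1$, a contradiction. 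With this direct argument the irreducibility bookkeeping in Step 3 also becomes unnecessary. One further small correction: the modifications of Proposition \ref{73}(1) are not birational, since $\dim X_{\tau_jh}=\dim X_h-1$ ($X_h$ is a $\PP^1$-bundle over $X_{\kappa_jh}$ while $X_{\tau_jh}$ is a blowup of it); they are still dominant, which is all the final claim needs, so the last sentence of Step 4 should invoke a composition of dominant rational maps rather than of birational ones.
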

\begin{proof}
Such a $j$ exists because $[n]\supset h^{-1}(r)\sqcup h^{-1}(r_1)\sqcup h^{-1}(r_2)\sqcup \cdots$ is a finite set. 
The modification process terminates in finite time, since each modification drops the sum $\sum_{i=1}^rh(i)$ by one or strictly increases the sequence $(h(1),\cdots,h(r))$ in the lexicographic order among those sequences having the same sum $\sum_{i=1}^r h(i)$.
\end{proof}

\begin{definition}\label{84}
    For $k\geq 1$, let $h_k:[n]\xrightarrow{}[n]$ denote the Hessenberg function defined by 
    \[h_k(i)=\min\{i+k,n\}.\]
    \end{definition}

When $h\geq h_k$, we can always modify $h$ to $h_k$. 
\begin{corollary} \label{78}
    Let $k\geq1$. Let $h\in \cH_n$ be a Hessenberg function satisfying $h\geq h_k$. Then, there exists a dominant rational map $X_h \dashrightarrow X_{h_k}$ which is a composition of the modifications in Proposition \ref{73}.
\end{corollary}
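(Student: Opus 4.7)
The plan is to realize the map as the composition of a finite sequence $h=h^{(0)}, h^{(1)}, \ldots, h^{(N)} = h_k$, where each transition from $h^{(i)}$ to $h^{(i+1)}$ is one of the two modifications of Proposition~\ref{73}. Composing the resulting dominant rational maps then yields the desired $X_h \dashrightarrow X_{h_k}$.

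The central step I would isolate as a \emph{key lemma}: whenever $h \in \cH_n$ satisfies $h \geq h_k$ and $h \neq h_k$, there is an index $j^*$ satisfying $h(j^*-1) < h(j^*) = h(j^*+1)$ (with the convention $h(0)=0$) such that $h(j^*) > h_k(j^*)$ and $|h^{-1}(j^*)| \leq 1$. Granted this, the chain of inequalities $h(j^*) > h_k(j^*) = j^*+k \geq j^*+1$ (valid because $k \geq 1$) forces $h(j^*) \neq j^*+1$ automatically, so the relevant case of Proposition~\ref{73} applies at $j^*$; moreover, since the modification only decreases $h$ at $j^*$ by $1$ (and may increase it at some $j_0 \in h^{-1}(j^*)$), the relation $h^{(i+1)} \geq h_k$ is preserved.

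For the key lemma, let $J$ denote the set of plateau starts $j$ with $h(j) > h_k(j)$. Taking the largest $i_0$ with $h(i_0) > h_k(i_0)$, the inequalities $h(i_0) \geq i_0+k+1$, $h_k(i_0+1) = i_0+k+1$, and $h(i_0+1) = h_k(i_0+1)$ (by maximality of $i_0$) force $h(i_0) = h(i_0+1) = i_0+k+1$, so $j_0 := \min h^{-1}(i_0+k+1)$ lies in $J$ and $J \neq \emptyset$. I would then pick $j^* \in J$ minimizing the value $h(j^*)$. If $h^{-1}(j^*) = [a,b]$ had $b \geq a+1$, then $a$ would itself be a plateau start with $h(a) = j^*$, and $h(a+1) = j^* \geq h_k(a+1) = a+k+1$ would upgrade to $h(a) > h_k(a)$, so $a \in J$; but $h(a) = j^* < h(j^*)$ because $h(j^*) > j^*+k > j^*$, contradicting the minimality of $h(j^*)$. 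Hence $|h^{-1}(j^*)| \leq 1$.

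Termination follows just as in Corollary~\ref{77}: type~(1) modifications strictly decrease $\sum_i h(i)$, while type~(2) modifications preserve this sum and strictly increase the lex order of $(h(1), \ldots, h(n))$; both quantities are bounded, so the process halts in finitely many steps, and by the key lemma it can halt only at $h = h_k$. The main technical obstacle is the key lemma itself---locating an applicable modification point with $|h^{-1}(j^*)| \leq 1$ as required by Proposition~\ref{73}---and it is resolved by choosing $j^*$ so as to minimize the plateau value $h(j^*)$ among eligible plateau starts.
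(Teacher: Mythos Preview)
Your proof is correct and follows essentially the same strategy as the paper: repeatedly apply a modification from Proposition~\ref{73} at a well-chosen plateau index, check that $h \geq h_k$ is preserved, and invoke the same termination argument (sum decreases or lex order increases). The paper's version is more economical in one respect: it simply takes the \emph{smallest} $j < n-k$ with $h(j) = h(j+1)$ and observes that $h$ is then strictly increasing on $[1,j]$, which immediately gives $|h^{-1}(j)| \le 1$. Your $j^*$---a plateau start in $J$ minimizing $h(j^*)$---turns out to be the very same index (since $h$ is weakly increasing, distinct plateau starts have strictly increasing $h$-values, so minimizing $h(j^*)$ over $J$ is the same as taking the smallest element of $J$, which coincides with the paper's $j$). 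Your route to $|h^{-1}(j^*)| \le 1$ via the contradiction ``$a \in J$ with $h(a) < h(j^*)$'' is valid but more elaborate than the paper's one-line observation. On the other hand, your framing makes the preservation step $h(j^*) - 1 \geq h_k(j^*)$ immediate from the definition of $J$, whereas the paper leaves this implicit.
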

\begin{proof}
Note that $h=h_k$ if and only if $h\geq h_k$ and
\[\{1\leq i< n-k~:~h(i)=h(i+1)\}=\emptyset.\]
If $h\ne h_k$, there exists $1\le j<n-k$ with $h(j)=h(j+1)$. 
For the smallest such $j$, we have $|h^{-1}(j)|\le 1$
and hence we can apply Proposition \ref{73} to modify $h$ to $\tau_jh$ or $\tau^{j_0}_jh$. 
Note that if $h>h_k$, then the resulting $\tau_jh$ or $\tau^{j_0}_jh$ is greater than or equal to $h_k$.  
So we can repeatedly apply this modification until we reach $h_k$. 
This modification process stops in finite time 
because each modification drops the sum $\sum_{i=1}^n h(i)$ by one or strictly increases the sequence $(h(1),\cdots, h(n))$ in the lexicographic order among those sequences having the same sum $\sum h(i)$.
\end{proof}

In particular, for every $h\in \cH_n$ with $h(i)>i$ for all $i<n$, there is a dominant rational map $X_h\dashrightarrow X_{h_1}$ which is a composition of the modifications in Proposition \ref{73}. 
Since $\cF(h_1)=Q_n$ is given by the recursive formula \eqref{55}, we can compute $\cF(h)$ by keeping track of the modifications. 
In \S\ref{91}, we give explicit algorithms that calculate $\cF(h)$.

\bigskip

\section{The generalized Shareshian-Wachs conjecture}\label{71}

In \S\ref{60}, we provided an elementary proof of the Shareshian-Wachs conjecture (Theorem \ref{62}) which relates the chromatic quasi-symmetric function of the associated graph $\Gamma_h$ with the equivariant Poincar\'e polynomial of the Hessenberg variety $X_h$ when $h\in \cH_n$ is an ordinary Hessenberg function. 
In this section, we present a natural generalization of the Shareshian-Wachs conjecture for an arbitrary generalized Hessenberg function $h$ and prove it (Theorem \ref{86}).

\medskip

For a graph $\Gamma$, we have the \emph{chromatic function} 
\[\chi_\Gamma:\N\lra \Z, \quad \chi_\Gamma(n)=|\{\gamma:V(\Gamma)\to [n]\,:\, \textrm{proper}\}|\]
where a coloring $\gamma:V(\Gamma)\to [n]$ of the vertices is called proper if $\gamma(i)\ne \gamma(j)$ whenever there is an edge whose end points are $i$ and $j$. The chromatic function was introduced in the early 20th century by G. Birkhoff and H. Whitney to study the four color problem which says $\chi_\Gamma(4)>0$ for planar graphs.  
As we have seen in Definition \ref{61}, the chromatic function $\chi_\Gamma$ can be refined to the chromatic quasi-symmetric function $\csf_q$ for the graph associated to an ordinary Hessenberg function \cite{Sta,SW}. 
To generalize the Shareshian-Wachs conjecture for generalized Hessenberg functions, 
we extend these notions to weighted graphs as follows. 

\begin{definition} A graph $\Gamma$ is called \emph{weighted} if it comes with a map,  called the weight, 
$$w:\V(\Gamma)\lra \N$$
on the set of vertices. 
For a subset $S\subset \N$, an \emph{$S$-coloring} on the weighted graph $(\G,w)$ is a function 
    \[\gamma:\V(\G)\longrightarrow 2^S\]
    which associates to each vertex $i\in \V(\G)$ a subset $\gamma(i)\subset S$ of order $w(i)$. We simply call $\gamma$ a \emph{coloring} if $S=\N$.
 An $S$-coloring $\gamma$ on $(\G,w)$ is called \emph{proper} if $\gamma(i)$ and $\gamma(j)$ are disjoint whenever two vertices $i,j$ are the end points of an edge.
\end{definition}

The following is a natural generalization of $\chi_\Gamma$.
\begin{definition} The \emph{chromatic function} $\chi_{\G,w}$ of the weighted graph $(\G,w)$ is defined by
    \[\chi_{\G,w}(n)=|\{\text{proper $[n]$-colorings of }(\G,w)\}|\]
    for $n\in \N$.
\end{definition}
Obviously, $\chi_{\Gamma,w}$ equals $\chi_\Gamma$ if $w(i)=1$ for all $i\in \V(\G).$

\medskip 

For generalized Hessenberg functions, Definition \ref{61} is generalized as follows.
\begin{definition}\label{82} 
Let $h\in \cH_{I,n}$ be a generalized Hessenberg function with $I=\{i_1,\cdots,i_r\}$ and $0=i_0<i_1<\cdots<i_r<i_{r+1}=n$.
\begin{enumerate}
    \item The weighted graph $(\G_h,w_h)$ \emph{associated to $h$} consists of the vertex set $\V(\G_h)=I\cup \{n\},$ the edge set 
        \[\E(\G_h)=\{(i,j): i <j\leq h(i), ~ i,j\in \V(\Gamma_h)\}\]
and the weight $w_h$ defined by
        $w_h(i_k)=i_k-i_{k-1}.$
    In particular, the total weight $\lvert w_h\rvert=\sum_{i\in \V(\G)}w_h(i)$ is $n$.
    \item The \emph{chromatic quasi-symmetric function} $\csf_q(h)$ of the weighted graph $(\G_h,w_h)$ is defined by
    \[\csf_q(h)=\sum_{\substack{\gamma:\V(\G_h)\to2^{\N}\\\text{proper}}}q^{\asc_{h}(\gamma)}x_{\gamma} \in \Lambda^n[q]\]
    where $x_{\gamma}=\prod_{i\in \V(\G_h)}\prod_{a\in \gamma(i)}x_a$ and 
    \[\asc_{h}(\gamma)=\sum_{\substack{(i,j)\in \E(\G_h)}} |\{(a,b)\in \gamma(i)\times \gamma(j):a<b\}|.\]
\end{enumerate}
\end{definition}
It is obvious that when $h\in \cH_n$ is an ordinary Hessenberg function, we get back to Definition \ref{61}.
\begin{remark}
(1) After completing the first draft of this paper, we were informed by Jaeseong Oh that Definition \ref{82} was introduced independently in \cite{Hwa}. (See \cite{Gas} for the $q=1$ case.) It is proved in \cite{Hwa} that $\csf_q(h)$ for any generalized Hessenberg function $h$ is symmetric.
Actually it also follows from Lemma \ref{80} below.
Indeed, using the notation in the proof of Theorem \ref{86}, by  \eqref{80d},
$$\csf_q(h)=\csf_q(\tilde{h})/\prod_{k=1}^{r+1}[i_k-i_{k-1}]_q!$$
belongs to $\Lambda^n[q]$, since we know that $\csf_q(\tilde{h})\in \Lambda^n[q]$ for the ordinary Hessenberg function $\tilde{h}$. 

(2) It is obvious that when $h\in \cH_n$ is an ordinary Hessenberg function, $w_h(i)=1$ for all $i$ and Definition \ref{82} becomes  Definition \ref{61}.
\end{remark}

\begin{example}\label{90b} 
Let $h\in\cH_{I,n}$ be a generalized Hessenberg function.

(1) When $I=\emptyset$ and $h:\{n\}\to \{n\}$, the associated weighted graph consists of a single point with weight $n$ and we have 
\beq\label{90a}\csf_q(h)=\mathsf{e}_n=\omega\cF(h)\eeq 
where $\mathsf{e}_n=\sum_{i_1<\cdots<i_n}x_{i_1}\cdots x_{i_n}$ is the $n$-th elementary symmetric function, 
because $X_h$ is a point and $\omega \mathsf{h}_n=\mathsf{e}_n$ where $\mathsf{h}_n$ is the characteristic of the trivial representation of $S_n$. Here $\omega$ is the involution of $\Lambda[q]$ 
defined right before Theorem \ref{62} in \S\ref{60}.

(2) When $I=\{1\}$ and $h(1)=n$, the associated graph $\G_h$ is the path 
\[\mathbf{1}~\text{---}~\mathbf{n}\]
with two vertices $\mathbf{1}, \mathbf{n}$ and one edge connecting them. The weight function is defined by 
\[w_h(\mathbf{1})=1 ~\text{ and }~ w_h(\mathbf{n})=n-1.\] 
So, for every proper coloring $\gamma$, $\gamma(\mathbf{1})\cap \gamma(\mathbf{n})=\emptyset$, $|\gamma(\mathbf{1})|=1$ and $|\gamma(\mathbf{n})|=n-1$. For $S\subset\N$ with $|S|=n$ and a nonnegative integer $d$, the number of proper colorings $\gamma$ satisfying
\[\gamma(\mathbf{1})\sqcup \gamma(\mathbf{n})=S ~\text{ and }~ \asc_h(\gamma)=d\]
is equal to the number of words of length $n$ with one copy of $\mathbf{1}$ and $n-1$ copies of $\mathbf{n}$ having precisely $d$ pairs $(\mathbf{1},\mathbf{n})$ in which $\mathbf{1}$ precedes $\mathbf{n}$. This number is $1$ for $0\leq d\leq n-1$ and $0$ otherwise. Since $X_h=\PP^{n-1}$, we have 
\[\csf_q(h)=(1+q+\cdots + q^{n-1})\mathsf{e}_n=[n]_q\mathsf{e}_n=\omega\cF(h).\]

By the same argument, we find that when $I=\{r\}$ and $h(r)=n$,
$$\csf_q(h)=\frac{[n]_q!}{[r]_q![n-r]_q!} \mathsf{e}_n=\omega\cF(h)$$
by \eqref{83}.

(3) When $I=\{1,2\}$ and $h(1)=h(2)=n$, the associated graph $\G_h$ is the triangle with vertices $\mathbf{1},\mathbf{2}, \mathbf{n}$. The weight function is defined by 
\[w_h(\mathbf{1})=w_h(\mathbf{2})=1~\text{ and }~ w_h(\mathbf{n})=n-2.\] 
So, for every proper coloring $\gamma$, $\gamma(\mathbf{1}),\gamma(\mathbf{2}),\gamma(\mathbf{n})$ are mutually disjoint with $|\gamma(\mathbf{1})|=1$, $\gamma(\mathbf{2})|=1$ and $\gamma(\mathbf{n})=n-2$. 
For $S\subset\N$ with $|S|=n$ and a nonnegative integer $d$, the number of proper colorings $\gamma$ satisfying
\[\gamma(\mathbf{1})\sqcup \gamma(\mathbf{2})\sqcup \gamma(\mathbf{n})=S ~\text{ and }~ \asc_h(\gamma)=d\]
is equal to the number of pairs $(a,b)$ of integers, indicating the locations of $\mathbf{1}$ and $\mathbf{2}$ in a word of length $n$, satisfying
\[a+b=d \text{ or }d-1,\quad 0\leq a,b\leq n-2.\]
It is straightforward to see that the number equals the coefficient of $q^d$ in $[n]_q[n-1]_q$. Therefore, we have 
\[\csf_q(h)=[n]_q[n-1]_q\mathsf{e}_n=\omega \cF(h)\]
by \eqref{89} since $X_h$ is the partial flag variety $\Fl_{\{1,2\}}(n)$.

By the same argument, we find that when $I$ is in the form of \eqref{27} and $h(i_k)=n$ for all $k$,
\[\csf_q(h)=\frac{[n]_q!}{[i_1]_q![i_2-i_1]_q!\cdots [n-i_r]_q!}\mathsf{e}_n=\omega\cF(h).\]
When $I=[n-1]$ and $h(i)=n$ for all $i$, the graph $\G_h$ is the complete graph and 
\beq\label{90}\csf_q(h)=[n]_q!=\omega\cF(h)\eeq
since $X_h=\Fl(n)$ is the full flag variety. 

(4) When $I=\{1,2\}$, $h(1)=2$ and $h(2)=n$, the associated graph $\G_h$ is the path 
$\mathbf{1}~\text{---}~\mathbf{2}~\text{---}~\mathbf{n}$ 
and the weight is  
\[w_h(\mathbf{1})=w_h(\mathbf{2})=1~\text{ and }~w_h(\mathbf{n})=n-2.\]
In this case, for each proper coloring $\gamma$, either
\begin{enumerate}
    \item[(i)] $\gamma(\mathbf{1}),\gamma(\mathbf{2}),\gamma(\mathbf{n})$ are mutually disjoint, or
    \item[(ii)] $\gamma(\mathbf{1})\subset \gamma(\mathbf{n})$ and $\gamma(\mathbf{2})\cap \gamma(\mathbf{n})=\emptyset$.
\end{enumerate}
The contribution of the proper colorings of type (i) to $\csf_q(h)$ is
\[\sum_{k=0}^{n-2}\Big((n-k-1)q+(k+1)\Big)q^k\mathsf{e}_n=\Big([n]_q+nq[n-2]_q\Big)\mathsf{e}_n\]
while the contribution of type (ii) is 
\[\sum_{k=0}^{n-2}\Big(\sum_{j=k+2}^{n-1}\sum_{i_1<\cdots<i_{n-1}}x_{i_j}x_{i_1}\cdots x_{i_{n-1}}q+\sum_{j=1}^k\sum_{i_1<\cdots<i_{n-1}}x_{i_j}x_{i_1}\cdots x_{i_{n-1}}\Big)q^k\]
\[=q[n-2]_q\sum_{j=1}^{n-1}\sum_{i_1<\cdots<i_{n-1}}x_{i_j}x_{i_1}\cdots x_{i_{n-1}}=q[n-2]_q\Big(\mathsf{e}_{(n-1,1)}-n\mathsf{e}_n\Big)\]
where $\mathsf{e}_{(n-1,1)}=\mathsf{e}_{n-1}\mathsf{e}_1$.
Therefore, we have 
\[\csf_q(h)=[n]_q\mathsf{e}_n+q[n-2]_q\mathsf{e}_{(n-1,1)}=\omega\cF(h)\]
since $X_h$ is the blowup of $\PP^{n-1}$ 
along the $n$ coordinate points on which $S_n$ acts by permuting the coordinates.
\end{example}

The above examples indicate that the Shareshian-Wachs conjecture should hold for generalized Hessenberg functions. 
\begin{theorem}[Generalized Shareshian-Wachs conjecture]\label{86} 
For any generalized Hessenberg function $h\in \cH_{I,n}$,  we have the equality 
\[\csf_q(h)=\omega\cF(h)\]
where $\omega$ is the involution of $\Lambda[q]$ sending a Schur function to its transpose. 
\end{theorem}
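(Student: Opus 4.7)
The plan is to reduce to the ordinary Shareshian--Wachs conjecture (Theorem~\ref{62}) via the Hessenberg function $\tilde h \in \cH_n$ associated to $h$ by \eqref{28}. On the geometric side, Corollary~\ref{31} already gives $\cF(\tilde h) = \cF(h)\prod_{k=1}^{r+1}[i_k-i_{k-1}]_q!$, so it suffices to establish the parallel combinatorial identity
\[ \csf_q(\tilde h) \;=\; \csf_q(h)\prod_{k=1}^{r+1}[i_k-i_{k-1}]_q! \qquad(\star) \]
for chromatic quasi-symmetric functions. Granting $(\star)$, since $\omega$ is $\Z[q]$-linear and $\prod_k[i_k-i_{k-1}]_q! \in \Z[q]$, applying $\omega$ to Theorem~\ref{62} for $\tilde h$ and dividing yields
\[ \omega\cF(h) \;=\; \frac{\omega\cF(\tilde h)}{\prod_{k=1}^{r+1}[i_k-i_{k-1}]_q!} \;=\; \frac{\csf_q(\tilde h)}{\prod_{k=1}^{r+1}[i_k-i_{k-1}]_q!} \;=\; \csf_q(h), \]
which is the desired equality.

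To prove $(\star)$, the approach is to construct a weight-preserving bijection between proper colorings $\tilde\gamma:[n]\to\N$ of $\Gamma_{\tilde h}$ and pairs $(\gamma,\sigma)$ in which $\gamma$ is a proper $\N$-coloring of $(\Gamma_h,w_h)$ and $\sigma=(\sigma_1,\ldots,\sigma_{r+1})$ is a tuple of linear orderings of the color sets $\gamma(i_1),\ldots,\gamma(i_{r+1})$. Given $\tilde\gamma$, set $\gamma(i_k) := \{\tilde\gamma(i_{k-1}+1),\ldots,\tilde\gamma(i_k)\}$. The key structural observations are: (i) the block $\{i_{k-1}+1,\ldots,i_k\}$ is a clique in $\Gamma_{\tilde h}$ because $\tilde h(i)=h(i_k)\ge i_k$ on this block, so the $i_k-i_{k-1}$ values of $\tilde\gamma$ on the block are distinct and $|\gamma(i_k)|=w_h(i_k)$; (ii) for $i$ in block $k$ and $j$ in block $k'$ with $k<k'$, the pair $(i,j)$ is an edge of $\Gamma_{\tilde h}$ iff $i_{k'}\le h(i_k)$, iff $(i_k,i_{k'})$ is an edge of $\Gamma_h$---independently of the choice of $i,j$ within their blocks. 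Hence $\gamma$ is a proper $(\Gamma_h,w_h)$-coloring, and $\sigma_k$ records the order in which $\tilde\gamma$ visits the elements of $\gamma(i_k)$.

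It remains to verify that the bijection is weight-preserving. Plainly $x_{\tilde\gamma}=x_\gamma$. By (i) and (ii) the ascent statistic decomposes as $\asc_{\tilde h}(\tilde\gamma) = \asc_h(\gamma) + \sum_{k=1}^{r+1}\asc(\sigma_k)$, where $\asc(\sigma_k)$ counts pairs $a<b$ in $\gamma(i_k)$ with $a$ preceding $b$ in the ordering $\sigma_k$. Summing over all linear orderings of a fixed set of size $m$ gives the standard $q$-factorial identity $\sum_\sigma q^{\asc(\sigma)}=[m]_q!$, so summing over $\sigma$ for fixed $\gamma$ produces the factor $\prod_k [i_k-i_{k-1}]_q!$, establishing $(\star)$. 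The main obstacle is bookkeeping---verifying the inter-block edge decomposition and keeping the ascent convention consistent---but no new conceptual input is required beyond observations (i), (ii), and the standard $q$-factorial identity.
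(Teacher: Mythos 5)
Your proposal is correct and follows essentially the same route as the paper: reduce to the ordinary case via $\tilde h$, Theorem \ref{62} and Corollary \ref{31}, and prove the refinement identity $\csf_q(\tilde h)=\csf_q(h)\prod_{k=1}^{r+1}[i_k-i_{k-1}]_q!$ by matching colorings of $\Gamma_{\tilde h}$ with weighted colorings of $(\Gamma_h,w_h)$ together with total orderings of the color sets, the $[m]_q!$ factors accounting for intra-block ascents. The only cosmetic difference is that the paper establishes this identity one block at a time by iterating Lemma \ref{80}, whereas you carry out the bijection over all blocks simultaneously.
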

\begin{proof}
Let $\tilde{h}\in \cH_n$ be the Hessenberg function defined by \eqref{28} where $I$ is written in the form of \eqref{27}. By Theorem \ref{62}, we have $\csf_q(\tilde{h})=\omega \cF(\tilde{h})$. By Corollary \ref{31}, we find that the theorem follows directly from 
\beq\label{80d} \csf_q(\tilde{h})=\csf_q(h) \prod_{k=1}^{r+1}[i_k-i_{k-1}]_q!.\eeq
It is immediate that this equation follows from repeated applications of Lemma \ref{80} below. 
\end{proof}

\begin{lemma}\label{80}
Let $h\in \cH_{I,n}$ with $I=\{i_1,\cdots,i_r\}$ in the form of \eqref{27}. Let $1\leq k\leq r+1$, and let $I'=I\cup \{i_{k-1}+1, \cdots, i_k-1\}$. Let $h'\in \cH_{I',n}$ be 
\[h'|_I=h \quad \text{ and }\quad h'(i)=h(i_k) ~\text{ for }~ i_{k-1}<i\leq i_k.\]
Then we have $\csf_q(h')=[i_k-i_{k-1}]_q!\, \csf_q(h)$.
\end{lemma}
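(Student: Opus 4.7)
The plan is to construct an $m!$-to-$1$ surjection from proper colorings of $(\G_{h'},w_{h'})$ onto proper colorings of $(\G_h,w_h)$, where $m:=i_k-i_{k-1}$, compatibly with the monomial $x_\gamma$, and then to show that the fibre-sum of $q^{\asc_{h'}(\gamma')}$ over each $\gamma$ equals $[m]_q!\cdot q^{\asc_h(\gamma)}$.

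First I would compare the two associated weighted graphs. Write $v_a:=i_{k-1}+a$ for $1\le a\le m$, so that $I'=(I\setminus\{i_k\})\cup\{v_1,\ldots,v_m\}$ and $v_m=i_k$. By construction $w_{h'}(v_a)=1$, and since $h'(v_a)=h(i_k)\ge i_k$, the set $\{v_1,\ldots,v_m\}$ spans a clique in $\G_{h'}$; this clique is collapsed to the single vertex $i_k$ of weight $m$ in $\G_h$. For each $j\in I\cup\{n\}$ outside this clique with $j<i_k$, the constraint $h(j)\in I\cup\{n\}$ forces either $h(j)\ge i_k$ or $h(j)\le i_{k-1}$, from which it follows that \emph{all} edges $(j,v_1),\ldots,(j,v_m)$ appear in $\G_{h'}$ if and only if $(j,i_k)\in\E(\G_h)$. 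The mirror statement for $j>i_k$ is immediate from $h'(v_a)=h(i_k)$. Hence $\E(\G_{h'})$ is obtained from $\E(\G_h)$ by replacing each edge incident to $i_k$ with its $m$ parallel copies and then adjoining the new clique.

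Next, I would define the map on colorings by $\gamma'\mapsto\gamma$, where $\gamma(j)=\gamma'(j)$ for $j\ne i_k$ and $\gamma(i_k)=\gamma'(v_1)\cup\cdots\cup\gamma'(v_m)$. The clique condition forces the $\gamma'(v_a)$ to be pairwise disjoint singletons, so $|\gamma(i_k)|=m=w_h(i_k)$, and properness of $\gamma$ follows from the edge comparison. Conversely, each proper $\gamma$ has exactly $m!$ preimages, one per bijection $\{v_1,\ldots,v_m\}\to\gamma(i_k)$, and $x_{\gamma'}=x_\gamma$ since both use the same multiset of colors.

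Finally, I would compute $\asc_{h'}(\gamma')$ edge by edge. Edges of $\G_{h'}$ whose endpoints both lie in $I\cup\{n\}\setminus\{i_k\}$ contribute identically to $\asc_{h'}(\gamma')$ and $\asc_h(\gamma)$. For each edge $(j,i_k)\in\E(\G_h)$ with $j<i_k$, the $m$ corresponding edges $(j,v_a)$ jointly contribute
\[\sum_{a=1}^m|\{(x,y)\in\gamma(j)\times\gamma'(v_a):x<y\}|=|\{(x,y)\in\gamma(j)\times\gamma(i_k):x<y\}|,\]
recovering the $(j,i_k)$-contribution to $\asc_h(\gamma)$; the analogous identity handles $j>i_k$. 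The only remaining contribution to $\asc_{h'}(\gamma')$ comes from the new clique and equals the number of ascending pairs in the sequence $(c_1,\ldots,c_m)$ with $\gamma'(v_a)=\{c_a\}$. Summing this clique contribution over the $m!$ orderings of $\gamma(i_k)$ yields $\sum_{\pi\in S_m}q^{\mathrm{noninv}(\pi)}=[m]_q!$ by the classical generating-function identity. Combining everything and summing over $\gamma$ gives $\csf_q(h')=[m]_q!\,\csf_q(h)$. I expect the only real obstacle to be the bookkeeping in the edge-comparison dichotomy and the three-way split of $\asc_{h'}(\gamma')$; everything else is routine.
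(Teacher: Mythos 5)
Your proposal is correct and takes essentially the same approach as the paper: collapsing the clique $\{v_1,\ldots,v_m\}$ to the vertex $i_k$ via the map $\varphi$, identifying the fibres of the induced $m!$-to-$1$ map on proper colorings with total orderings of $\gamma(i_k)$, and observing that the clique's contribution to the ascent statistic sums to $[m]_q!$ over the fibre. The paper leaves the edge-comparison dichotomy and the ascent bookkeeping as ``straightforward to check,'' which you have spelled out explicitly; otherwise the arguments coincide.
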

\begin{proof}
Let $m:=i_k-i_{k-1}.$ If $m=1$, there is nothing to prove. So we may assume $m>1$.  
The graph of $\Gamma_{h'}$ is obtained by replacing the vertex $\mathbf{i}_k$ by $m$ vertices 
and we have a map $\varphi:V(\G_{h'})\to V(\G_h)$ collapsing these $m$ vertices to $\mathbf{i}_k$. 
For $i<j$ in $I'\cup \{n\}$ with $\mathbf{i}$ or $\mathbf{j}$ not lying in the interval $(i_{k-1},i_k]$, there is an edge from $\mathbf{i}$ to $\mathbf{j}$ in $\Gamma_{h'}$ if and only if 
there is an edge from $\varphi(\mathbf{i})$ to $\varphi(\mathbf{j})$. 
For $\mathbf{i}$ and $\mathbf{j}$ in the interval $(i_{k-1},i_k]$, there is always an edge. 

A proper coloring $\gamma'$ of $(\G_{h'},w_{h'})$ induces a proper coloring $\gamma$ of $(\G_h, w_h)$ by $\gamma(\mathbf{i}_k)=\cup_{\mathbf{j}\in\varphi^{-1}(\mathbf{i}_k)}\gamma'(\mathbf{j}).$ A proper coloring $\gamma$ of $(\G_h,w_h)$ together with a total order on the set $\gamma(\mathbf{i}_k)$ defines a proper coloring of $(\G_{h'},w_{h'})$ uniquely. 
Therefore the difference between $\csf_q(h)$ and $\csf_q(h')$ comes from total orderings of the set $\gamma(\mathbf{i}_k)$ for each proper coloring $\gamma$ of $\G_h$, exactly as in the cases of \eqref{90a} and \eqref{90}. 
With these observations, it is straightforward to check that $[m]_q!\csf_q(h)=\csf_q(h')$. 
\end{proof}

Direct consequences of Theorem~\ref{86} are the positivity and the unimodality of $\csf_q(h)=\omega\cF(h)$ in the Schur basis.

\begin{corollary}
Let $h$ be a generalized Hessenberg function. Then the coefficients of $\csf_q(h)$ in the Schur basis are unimodal polynomials with non-negative coefficients.
\end{corollary}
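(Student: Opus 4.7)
The plan is to transport the statement to $\cF(h)$ via Theorem~\ref{86} and then invoke hard Lefschetz on the smooth projective variety $X_h$ with a Tymoczko-invariant K\"ahler class.

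First, by Theorem~\ref{86}, $\csf_q(h) = \omega\cF(h)$, and since $\omega$ sends $s_\lambda$ to $s_{\lambda'}$ it merely permutes the Schur basis. Hence it suffices to show that, writing $\cF(h) = \sum_\lambda g_\lambda(q) s_\lambda$, each $g_\lambda(q)\in\Z[q]$ has non-negative, unimodal coefficients. Non-negativity is immediate from Theorem~\ref{17}: each $A^k(X_h)\cong H^{2k}(X_h)$ is a genuine $S_n$-module under Tymoczko's action, so $\ch\,A^k(X_h)$ is Schur-positive, and the coefficient of $q^k$ in $g_\lambda(q)$ is precisely the multiplicity of the Specht module $\mathsf{S}^\lambda$ in $A^k(X_h)$.

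For unimodality I would exhibit a Tymoczko-invariant ample class $\eta\in A^1(X_h)$, the natural choice being
\[\eta \;=\; \sum_{k=1}^{r} c_1\bigl(\sO_{X_h}^{\oplus n}/\cV_{i_k}\bigr) \;=\; -\sum_{k=1}^{r} c_1(\cV_{i_k}).\]
On the partial flag variety $\Fl_I(n) = G/P_I$ this is a strictly dominant integer combination of fundamental-weight line bundles, hence ample; its restriction to the closed subvariety $X_h$ is then ample as well. A straightforward variant of Proposition~\ref{24}, applying the same argument to the tautological quotients $\sO^{\oplus n}/\cV_{i_k}$, shows that $\eta$ is fixed by the Tymoczko action, so cup product with $\eta$ is an $S_n$-equivariant map $A^k(X_h)\to A^{k+1}(X_h)$. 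By hard Lefschetz, this map is injective when $k$ is below the middle degree and surjective when $k$ is above, and $S_n$-equivariance then makes the cokernel (respectively kernel) an honest $S_n$-representation. Hence $g_{\lambda,k}\le g_{\lambda,k+1}$ below the middle and $g_{\lambda,k}\ge g_{\lambda,k+1}$ above it, giving unimodality.

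The main obstacle is the extension of Proposition~\ref{24} needed to conclude that $\eta$ is Tymoczko-invariant, since that proposition is stated only for quotients $\cV_j/\cV_i$ with both $i,j\in I$. This extension is essentially bookkeeping: the proof of Proposition~\ref{24} rests only on (i) Chern classes commuting with restriction to $X_h^T$ and (ii) an $S_n$-equivariant description of the fiberwise $T$-weights at each fixed point, both of which hold verbatim for $\sO^{\oplus n}/\cV_{i_k}$. Nevertheless, it is the one step not entirely immediate from the body of the paper, and if one wishes to avoid any extension one can take $\eta$ to be a positive combination of the $c_1(\cV_{i_{k+1}}/\cV_{i_k})$ together with a single boundary term $c_1(\sO^{\oplus n}/\cV_{i_r})$, reducing the verification to the already-proved Proposition~\ref{24} plus one boundary case.
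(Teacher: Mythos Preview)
Your proof is correct and essentially identical to the paper's: both transfer the statement to $\cF(h)$ via Theorem~\ref{86} and $\omega$, get Schur-positivity from the fact that each $A^k(X_h)$ is a genuine $S_n$-module, and deduce unimodality from hard Lefschetz with respect to an $S_n$-invariant ample class. In fact your class $\eta=-\sum_{k}c_1(\cV_{i_k})$ is literally the paper's $c_1(L)$ for $L=\bigotimes_{i\in I}\det\cV_i^*$, and the paper handles the minor extension of Proposition~\ref{24} you flag with the phrase ``as in Proposition~\ref{24}''; your explicit justification of that step is a welcome clarification rather than a deviation.
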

\begin{proof}
By Theorem~\ref{86}, it is enough to prove the same statement for $\cF(h)$, since the involution $\omega$ preserves the Schur basis. The non-negativity is obvious for $\cF(h)$ since $A^*(X(h))$ is a genuine $S_n$-representation in each degree. 

To see the unimodality, let $h\in \cH_{I,n}$ and consider the line bundle $L:=\bigotimes_{i\in I}\det \cV_i^*$ on $X_h$, which is very ample. The first Chern class homomorphism $c_1(L):A^k(X_h)\to A^{k+1}(X_h)$ by $L$ is $S_n$-equivariant as in Proposition~\ref{24}. The unimodality follows from the hard Lefschetz property.
\end{proof}

The positivity part is independently proved in \cite{Hwa} by combinatorial means.

The following question seems also very interesting. 

\begin{question}[Generalized Stanley-Stembridge conjecture]\label{81}
Is $\csf_q(h)$ $\mathsf{e}$-positive for an arbitrary generalized Hessenberg function $h$? Equivalently, is the equivariant Poincar\'e polynomial $\cF(h)$ $\mathsf{h}$-positive for any $h$, where $\mathsf{h}$ denotes the complete homogeneous basis?
\end{question}

Note that when $h$ is an ordinary Hessenberg function, Question \ref{81} is the Stanley-Stembridge conjecture in \cite{SS}. 

The following is a generalization of \cite[Conjecture 5.3]{AN}.
\begin{question}[Log-concavity]
Are the coefficients of $\csf_q(h)$ in the elementary basis log-concave polynomials for every generalized Hessenberg function $h$?
\end{question}

\bigskip

\section{New algorithms for the $S_n$-characters} \label{91}
In this section, we provide two new algorithms (Propositions \ref{101} and \ref{105}) for computation of the equivariant Poincar\'e polynomials $\cF(h)$ of generalized Hessenberg varieties $X_h$. Both are based on 
Corollary \ref{48} and Proposition \ref{73}. 
As an application, we provide a combinatorial formula (Theorem \ref{102}) for 
the equivariant Poincar\'e polynomial $\cF(h)$. 
Our algorithms for $\cF(h)$ are completely different from the previously known algorithm in \cite{AN} and can be easily implemented on computer programs like Sage. Codes for our algorithms on Sage will be available upon request.

\medskip

By Corollary \ref{31}, the equivariant Poincar\'e polynomials $\cF(h)$ for generalized Hessenberg functions $h$ are readily obtained once we know those for ordinary Hessenberg functions. 
We say a generalized Hessenberg function $h$ is \emph{irreducible} if $X_h$ is. 

\subsection{Algorithm 1: reduction to the permutohedral varieties} \label{103}
The first algorithm is based on Corollary~\ref{78} and Example~\ref{50}. Geometrically, this is the reduction of $X_h$ to the permutohedral variety $X_{h_1}$ by the birational geometry described in Proposition \ref{73}.

Let $h\in \cH_n$ be irreducible, so that $h(i)>i$ for $i<n$ and hence $h\ge h_1$. 
By Corollary \ref{78}, the rational map
\[X_h \dashrightarrow X_{h_1},\]
which generically sends a complete flag $(V_i)_{i\in [n]}$ to another complete flag $(W_i)_{i\in[n]}$ defined by
\[W_{n-i}=V_{n-1}\cap x^{-1}V_{n-1}\cap \cdots \cap x^{-i+1}V_{n-1},\qquad 1\leq i<n\]
is a composition of the modifications in Proposition \ref{73}. 
Hence $\cF(h)$ can be computed by applying \eqref{75} and \eqref{76} repeatedly until we reach $\cF(h_1)=Q_n$ for which we can apply \eqref{55}. 
So we have the following. 

\begin{proposition}[Algorithm 1] \label{101}
Let $h\in \cH_n$. Let $p\geq0$. The $S_n$-representation on $A^p(X_h)$ is computed as follows.
\begin{enumerate} 
    \item[Step 0.] The algorithm is based on induction on $n$ and degree $p$. We may assume that the representations on the cohomology of Hessenberg varieties with smaller $n$ or $p$ are already known.
    \item[Step 1.] If $h(i)=i$ for some $i<n$, then use Proposition \ref{34}.
     Return to Step 0 for $h'$ and $h''$.
    \item[Step 2.] If $h$ is irreducible and $h(i)=h(i+1)$ for some $i< n-1$, then apply Corollary~\ref{78} to reach $h_1$.
    \begin{enumerate}
        \item All the blowup centers contribute to $A^p(X_h)$ with classes of  degree $<p$. Return to Step 0 for the blowup centers. 
        \item The representation on $A^*(X_{h_1})$ is known by \eqref{55}.
    \end{enumerate}
\end{enumerate}
\end{proposition}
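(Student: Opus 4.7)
The plan is to verify that Algorithm~1 is a well-founded recursion that produces $\ch(A^p(X_h))$ in finitely many reductions. I would organize the argument as a double induction: the outer induction is lexicographic on the pair $(n, p)$, and for a fixed $(n, p)$ with $h$ irreducible there is an inner induction on the pair
$$\mu(h) := \left(\,\sum_{i=1}^n h(i),\ -(h(1), \ldots, h(n))\,\right)$$
with reverse lex in the second component; this is precisely the termination measure used in the proof of Corollary~\ref{78}. The base cases are immediate: if $n = 1$ then $X_h$ is a point, and if $p = 0$ then $A^0(X_h)$ is the permutation representation of $S_n$ on the set of irreducible components, read off from Theorem~\ref{4}(2).

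When $h$ is reducible, Step~1 invokes Proposition~\ref{34} to write $\cF(h) = \cF(h')\cF(h'')$ with $h' \in \cH_i$ and $h'' \in \cH_{n-i}$ of strictly smaller rank; the outer induction on $n$ supplies $\cF(h')$ and $\cF(h'')$, and the $q^p$-coefficient of the product (the Frobenius characteristic of the representation induced from $S_i \times S_{n-i}$) yields $\ch(A^p(X_h))$. When $h$ is irreducible, a simple pigeonhole argument shows that either $h = h_1$, or there exists $j < n-1$ with $h(j) = h(j+1)$: indeed, if $h \ne h_1$ one finds a smallest index $i$ with $h(i) \ge i+2$, and then the $n-i$ non-decreasing values $h(i), \ldots, h(n-1)$ lie in the $n-i-1$-element set $\{i+2, \ldots, n\}$, forcing a repetition. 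In the terminal case $h = h_1$, formula~\eqref{55} expresses $Q_n = \cF(h_1)$ as a $\Lambda[q]$-linear combination of $Q_1, \ldots, Q_{n-2}$ with explicit $q$-integer coefficients, and extracting the $q^p$-coefficient uses only $Q_j$ for $j < n$, provided by the outer induction.

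In the remaining subcase ($h$ irreducible, $h \ne h_1$), Corollary~\ref{78} selects the smallest $j < n-1$ with $h(j) = h(j+1)$, which automatically satisfies $|h^{-1}(j)| \leq 1$, and applies Proposition~\ref{73} to obtain formula~\eqref{75} or~\eqref{76}. Each of these expresses $\cF(h)$ as a $\Z[q]$-linear combination of (a) $\cF$ of a modified Hessenberg function ($\tau_j h$ or $\tau_j^{j_0} h$), strictly smaller than $h$ under $\mu$, and (b) $\cF$ of the blowup centers $\tau_{j,j+1} h$ and $\tau_{j_0,j} h$, each appearing with a factor of $q$. Extracting the $q^p$-coefficient: the terms in (a) fall under the inner $\mu$-induction, while those in (b) contribute only through the $q^{p-1}$-coefficient and so fall under the outer induction on $p$. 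The main obstacle is bookkeeping---verifying that each recursive call genuinely decreases some component of the well-ordering, which follows verbatim from the lex-order argument in Corollary~\ref{78}, and checking that the identities of Proposition~\ref{73} deliver an actual $S_n$-character rather than merely a virtual one. The latter is automatic because each formula arises from a blowup decomposition of the form~\eqref{53}, and all the pullback, pushforward, and Chern-class maps involved are $S_n$-equivariant by Propositions~\ref{21}, \ref{24}, and~\ref{33}.
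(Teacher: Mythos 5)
Your proposal is correct and follows essentially the same reasoning the paper uses: the statement rests on Corollary~\ref{78} (which supplies the termination measure on $\sum_i h(i)$ with a secondary lexicographic tiebreak), Proposition~\ref{73} (which supplies the recursions \eqref{75}, \eqref{76} and the equivariance of the blowup decompositions), Proposition~\ref{34} for the reducible step, and formula~\eqref{55} for the terminal case $h = h_1$. You have merely made explicit the double induction and the well-foundedness argument that the paper leaves implicit in the discussion preceding the proposition.
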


Recall that $Q_n:=\cF(h_1)$ for $h_1\in \cH_n$ and $Q_1=1$.  
Let $Q_\lambda:=Q_{\lambda_1}\cdots Q_{\lambda_l}$ for each partition $\lambda=(\lambda_1,\cdots, \lambda_l)$ of $n$. 
As a result of Algorithm 1, we obtain $\cF(h)$ in the form of
\[\cF(h)=\sum_{\lambda\vdash n} d_{\lambda}Q_\lambda \]
where $d_\lambda\in \mathbb{Z}[q]$.
We can write down $d_\lambda$ more explicitly. Let $h\in \cH_n$ be an irreducible Hessenberg function. 
\begin{definition}
A sequence of Hessenberg functions $(h^{(0)},\cdots, h^{(\ell)})$ in $\cH_n$ is called \emph{admissible} if 
 $h^{(\ell)}\leq h_1$, and for $0\leq r<\ell$,  letting 
   $$i_r=\min\{1\leq i<n~:~h^{(r)}(i)=h^{(r)}(i+1)\neq i+1\}$$
   so that $\lvert(h^{(r)})^{-1}(i_r)\rvert\le 1$, 
        \[h^{(r+1)}=\begin{cases}\tau_{i_r}h^{(r)}\quad \text{ or }\quad  \tau_{i_r,i_r+1}h^{(r)}  &\text{ if } (h^{(r)})^{-1}(i_r)=\emptyset \\
        \tau_{i_r}^{i'_r}h^{(r)},\quad \tau_{i'_r,i_r}h^{(r)}\quad \text{ or }\quad \tau_{i_r,i_r+1}h^{(r)} &\text{ if } (h^{(r)})^{-1}(i_r)=\{i'_r\}.\end{cases}\]
\end{definition}
\begin{definition} Let $\overrightarrow{h}=(h^{(0)},\cdots, h^{(\ell)})$ be an admissible sequence of Hessenberg functions.
\begin{enumerate}
    \item We define a polynomial $w_{\overrightarrow{h}}\in \mathbb{Z}[q]$ by
    \[w_{\overrightarrow{h}}(q):=\prod_{r=0}^{\ell-1}w_{\overrightarrow{h},r}(q)\]
    where for each $0\leq r<\ell$, $w_{\overrightarrow{h},r}\in \mathbb{Z}[q]$ is defined by
    \[w_{\overrightarrow{h},r}=\begin{cases}1 &\text{ if } h^{(r+1)}=\tau^{i'_r}_{i_r}h^{(r)}\\
    1+q &\text{ if } h^{(r+1)}=\tau_{i_r}h^{(r)}\\
    q &\text{ if } h^{(r+1)}=\tau_{i'_r,i_r}h^{(r)}\\
    -q &\text{ if } h^{(r+1)}=\tau_{i_r,i_r+1}h^{(r)}.
    \end{cases}\]
    When $\ell=0$, define $w_{\overrightarrow{h}}(q):=1$.
    \item Let $\lambda(\overrightarrow{h})$ be the partition of $n$ defined by
    \[\lambda(\overrightarrow{h}):=(j_1,j_2-j_1,\cdots, j_k-j_{k-1})\]
    where $\{j:h^{(\ell)}(j)=j\}=\{j_1,\cdots,j_k\}$ with $j_1<\cdots<j_k=n$ and $j_0:=0$. In particular, if $h^{(\ell)}=h_1$, then $\lambda(\overrightarrow{h})=(n)$ is trivial.
\end{enumerate}
\end{definition}

Our proof of Corollary \ref{77} and Corollary \ref{78} together with Proposition \ref{101} give us the following. 
\begin{theorem}\label{102}
Let $\cA_h$ be the set of admissible sequences of Hessenberg functions starting with $h$, i.e. $h^{(0)}=h$. Then,
\[\cF(h)=\sum_{\overrightarrow{h}\in \cA_h}w_{\overrightarrow{h}}(q) Q_{\lambda(\overrightarrow{h})} \in \Lambda^n[q].\]
\end{theorem}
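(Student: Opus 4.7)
The plan is to prove the formula by strong induction on the well-founded measure $\mu(h):=\bigl(\sum_{i=1}^n h(i),\,-(h(1),\ldots,h(n))\bigr)$ introduced in the proofs of Corollaries \ref{77} and \ref{78}, ordered first by the integer coordinate and then by reverse lexicographic order on the tuple. Every modification appearing in Proposition \ref{73} strictly decreases $\mu$: $\tau_{i_r}$ drops $\sum_i h(i)$ by one, while $\tau_{i_r,i_r+1}$ and $\tau_{i_r',i_r}$ drop it by two; $\tau_{i_r}^{i_r'}$ preserves $\sum_i h(i)$ but strictly advances the tuple in lex order, since $i_r'<i_r$ makes the first differing position be $i_r'$, where the value strictly increases. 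Hence the induction on Hessenberg functions reachable by admissible sequences from $h$ is well-founded, and this is exactly the inductive framework underlying Algorithm 1 (Proposition \ref{101}).

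The base case is $h\leq h_1$, that is, $h(i)\in\{i,i+1\}$ for $i<n$. Then no $i$ satisfies $h(i)=h(i+1)\ne i+1$, so the unique admissible sequence starting at $h$ is the singleton $(h)$ of length $\ell=0$, with $w_{\overrightarrow{h}}(q)=1$ and $\lambda(\overrightarrow{h})=(j_1,j_2-j_1,\ldots,j_k-j_{k-1})$ for $\{j:h(j)=j\}=\{j_1<\cdots<j_k=n\}$. Iterating Proposition \ref{34} at $j_1,\ldots,j_{k-1}$ decomposes $X_h$ as a disjoint union of products $\prod_s X_{h_1^{(m_s)}}$ with $m_s=j_s-j_{s-1}$, since the restriction of $h$ to each block $(j_{s-1},j_s]$ agrees with $h_1$ on that block. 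Combined with $Q_m=\cF(h_1^{(m)})$ from Example \ref{50} and the convention $Q_1:=1$, this yields $\cF(h)=\prod_s Q_{m_s}=Q_{\lambda(\overrightarrow{h})}$ as required.

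For the inductive step with $h\not\leq h_1$, the proof of Corollary \ref{78} produces the smallest $i_0<n-1$ with $h(i_0)=h(i_0+1)>i_0+1$, and a short monotonicity argument shows $|h^{-1}(i_0)|\leq 1$, so Proposition \ref{73} applies at $i_0$. Formula \eqref{75} (respectively \eqref{76}) expresses $\cF(h)$ as a $\mathbb{Z}[q]$-linear combination of $\cF(h^{(1)})$ over the two (respectively three) allowed next steps, with coefficients $1+q,\,-q$ (respectively $1,\,-q,\,q$) matching exactly the weight $w_{\overrightarrow{h},0}$ assigned in the admissibility definition to the corresponding choice of $h^{(1)}$. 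Expanding each $\cF(h^{(1)})$ by the inductive hypothesis and using the evident bijection $\overrightarrow{h}\leftrightarrow(h,\overrightarrow{h^{(1)}})$ between $\{\overrightarrow{h}\in\cA_h:\ell\geq 1\}$ and $\bigsqcup_{h^{(1)}}\cA_{h^{(1)}}$, together with the multiplicativity $w_{\overrightarrow{h}}=w_{\overrightarrow{h},0}\cdot w_{\overrightarrow{h^{(1)}}}$ and $\lambda(\overrightarrow{h})=\lambda(\overrightarrow{h^{(1)}})$, delivers the claimed formula. The main obstacle is purely bookkeeping: checking case-by-case that the coefficient-weight matching is correct and that the list of permitted next steps in the admissibility definition at $i_0$ coincides with the functions appearing on the right-hand side of Proposition \ref{73}; both verifications are immediate by inspection of the four definitions.
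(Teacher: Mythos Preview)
Your proof is correct and follows essentially the same approach as the paper. The paper's proof of Theorem~\ref{102} is a single sentence pointing to Corollaries~\ref{77} and~\ref{78} and Proposition~\ref{101}; you have written out the underlying induction explicitly, verifying the well-foundedness of the measure, the base case $h\le h_1$, and the bijection between admissible sequences from $h$ and those from its immediate successors, with the weight and coefficient matching checked against Proposition~\ref{73}.
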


For $h\in \cH_n$, we write $h=(h(1),\cdots, h(n))$ and $\cF(h)=\cF(h(1),\cdots, h(n))$.
\begin{example}\label{79a}
	Let $h=(3,3,3)\in \cH_3$. By Algorithm 1, we have a modification $X_h\dasharrow X_{(2,3,3)}=X_{h_1}$ in Proposition~\ref{73}(1) at 1. By \eqref{75}, 
	\[\cF(h)=[2]_q\cF(2,3,3)-q\cF(2,2,3)=[2]_qQ_3-qQ_{(2,1)}=[3]_q!\mathsf{h}_3\]
	since $\cF(h_1)=Q_3=[3]_q\mathsf{h}_3+q\mathsf{h}_{(2,1)}$, $Q_2=[2]_q\mathsf{h}_2$ and $Q_1=\mathsf{h}_1$.
\end{example}
\begin{example}\label{79b}
	Let $h=(3,4,4,4)\in \cH_4$. By Algorithm 1, we have two modifications
	\[X_h\dasharrow X_{(3,3,4,4)}\dasharrow X_{(2,3,4,4)}=X_{h_1}\]
	in Proposition~\ref{73}(1) at 2 and 1 respectively. By \eqref{75}, 
	\begin{equation*}
		\begin{split}
			&\cF(h)=[2]_q\cF(3,3,4,4)-q\cF(3,3,3,4),\\
			&\cF(3,3,4,4)=[2]_q\cF(h_1)-q\cF(2,2,4,4)
		\end{split}
	\end{equation*}
	where $\cF(3,3,3,4)=\cF(3,3,3)Q_1=[2]_qQ_{(3,1)}-qQ_{(2,1,1)}$ by Example~\ref{79a} and $\cF(2,2,4,4)=Q_{(2,2)}$. Hence,
	\begin{equation*}
		\begin{split}
			\cF(h)&=[2]_q^2Q_4-q[2]_qQ_{(3,1)}-q[2]_qQ_{(2,2)}+q^2Q_{(2,1,1)}\\
			&=[4]_q[2]_q^2\mathsf{h}_4+q^2[2]_q\mathsf{h}_{(3,1)}
		\end{split}
	\end{equation*}
	since $\cF(h_1)=Q_4=[4]_q\mathsf{h}_4+q[2]_q\mathsf{h}_{(3,1)}+q[2]\mathsf{h}_{(2,2)}$.
\end{example}
\begin{example}\label{79c}
	Let $h=(3,4,5,5,5) \in \cH_5$. We have four modifications
	\[X_h\dasharrow X_{(4,4,4,5,5)}\dasharrow X_{(3,4,4,5,5)}\dasharrow X_{(3,3,4,5,5)}\dasharrow X_{(2,3,4,5,5)}=X_{h_1}\]
	at 3, 1, 2 and 1 respectively. By \eqref{75} and \eqref{76},
	\begin{equation*}
		\begin{split}
			&\cF(h)=\cF(4,4,4,5,5)-q\cF(3,4,4,4,5)+q\cF(2,4,4,5,5),\\
			&\cF(4,4,4,5,5)=[2]_q\cF(3,4,4,5,5)-q\cF(3,3,4,5,5),\\
			&\cF(3,4,4,5,5)=[2]_q\cF(3,3,4,5,5)-q\cF(3,3,3,5,5),\\
			&\cF(3,3,4,5,5)=[2]_q\cF(h_1)-q\cF(2,2,4,5,5)
		\end{split}
	\end{equation*}
	where by Examples~\ref{79a} and \ref{79b} 	
	\begin{equation*}
		\begin{split}
			\cF(3,4,4,4,5)=&\cF(3,4,4,4)Q_1\\
						=&[2]_q^2Q_{(4,1)}-q[2]_qQ_{(3,1,1)}-q[2]_qQ_{(2,2,1)}+q^2Q_{(2,1,1,1)},\\
			\cF(3,3,3,5,5)=&\cF(3,3,3)Q_2=[2]_qQ_{(3,2)}-qQ_{(2,2,1)},\\
			\cF(2,2,4,5,5)=&Q_2\cF(2,3,3)=Q_{(3,2)}.
		\end{split}
	\end{equation*}
	Hence it suffices to compute $\cF(2,4,4,5,5)$.
	From the modification
	\beq\label{122}X_{(2,4,4,5,5)}\dasharrow X_{(3,3,4,5,5)}\eeq
	at 2, we have
	\begin{equation}\label{122b}
		\begin{split}
			\cF(2,4,4,5,5)&=\cF(3,3,4,5,5)-q\cF(2,3,3,5,5)+q\cF(1,3,4,5,5)\\
						&=\cF(3,3,4,5,5)-qQ_{(3,2)}+qQ_{(4,1)}.
		\end{split}
	\end{equation}
	Combining all these, we have
	\begin{equation*}
		\begin{split}
			\cF(h)=&[2]_q^3Q_5+(-q[2]_q^2+q^2)Q_{(4,1)}+(-2q[2]_q^2-q^2)Q_{(3,2)}\\
			&+q^2[2]_qQ_{(3,1,1)}+2q^2[2]_qQ_{(2,2,1)}-q^3Q_{(2,1,1,1)} \\
			=&(q^7+4q^6+7q^5+8q^4+8q^3+7q^2+4q+1)\mathsf{h}_5\\
				&+(2q^5+4q^4+4q^3+2q^2)\mathsf{h}_{(4,1)}+(q^4+q^3)\mathsf{h}_{(3,2)}
		\end{split}
	\end{equation*}
	where $Q_5=[5]_q\mathsf{h}_5+q[3]_q\mathsf{h}_{(4,1)}+q([3]_q+[2]_q^2)\mathsf{h}_{(3,2)}+q^2\mathsf{h}_{(2,2,1)}$.
\end{example}

\subsection{Algorithm 2: reduction to the projective spaces}\label{104} 
The second algorithm is based on  Corollary~\ref{48}, Proposition \ref{73} and Corollary~\ref{77}. Geometrically this is reduction to projective spaces. One big difference from Algorithm 1 is that this essentially involves the generalized Hessenberg varieties of the form $X_h$, $h\in \cH_{r,n}$.

For each $h\in \cH_{r,n}$, consider the rational map
\[X_h \dashrightarrow \PP^{n-1}, \quad (V_i)_{i\in[r]}\mapsto V_1\]
which generically forgets all the subspaces except $V_1$. By Corollary~\ref{48} and Corollary~\ref{77}, this rational map can be resolved by the blowups in Corollary~\ref{48} and the modifications in Proposition \ref{73}.
So we have the following.

\begin{proposition}[Algorithm 2]\label{105} 
Let $h\in \cH_{r,n}$. Let $p\geq 0$. The $S_n$-representation on $A^p(X_h)$ is computed as follows.
\begin{enumerate}
    \item[Step 0.] The algorithm is based on the induction on $n$ and degree $p$. We may assume that the representations on the cohomology of generalized Hessenberg varieties with smaller $n$ or $p$ and arbitrary $r$ are already known.
    \item[Step 1.] Fix $n$ and $p$. Then the algorithm uses the induction on $r$. Hence we further assume that the representations on $A^p(X_h)$ for $h\in \cH_{r',n}$ are already known for all $r'<r$.
    \item[Step 2.] If $h(r)<n$, then $h(r)=r$, hence by Proposition \ref{34},
    \[\ch(A^p(X_h))=\ch(A^p(X_{h|_{[r]}}))\mathsf{h}_{n-r}\]
    with $h|_{[r]}\in \cH_r$. 
 	Return to Step 0 for $h|_{[r]}\in \cH_r$.
 	\item[Step 3.] 
   	Suppose $r>1$ and $h(r)=n$.
    \begin{enumerate}
        \item[(1)] If $\lvert h^{-1}(r)\rvert \leq 1$, then apply Corollary~\ref{48}. The base $X_{\kappa_rh}$ of $\rho_r$ is handled by $\kappa_rh\in\cH_{r-1,n}$. Return to Step 1 for the base. 
        
        	In case $\lvert h^{-1}(r)\rvert=1$, the blowup center contributes from degrees smaller than $p$.  
        	Return to Step 0 for the blowup center.
        \item[(2)] If $\lvert h^{-1}(r)\rvert\geq2$, then apply Corollary \ref{77}.
            The resulting $h'$ satisfies $|{h'}^{-1}(r)|\le 1$. Return to Step 3-(1).
            
            All the blowup centers which appear in the course of application of Corollary~\ref{77} contribute from degrees smaller than $p$. Return to Step 0 for the blowup centers.
    \end{enumerate}
    \item[Step 4.] Suppose $r=1$ and $h(1)=n$. Then 
    $X_h=\PP^{n-1}$, and the representations on its cohomology are trivial, as we have seen in \eqref{23b}.
\end{enumerate}
\end{proposition}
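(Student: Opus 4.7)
The plan is to verify termination and correctness of the recursive procedure by a triple induction: primary on $n$, secondary on the degree $p$, and tertiary on $r$. The base cases are Step~4, where $X_h\cong\PP^{n-1}$ has trivial $S_n$-action by \eqref{23b}, together with the elementary case $p=0$, where $A^0(X_h)$ is spanned by the fundamental classes of the connected components of $X_h$ and its $S_n$-action is read off from Theorem~\ref{4}(2). The inductive step then requires showing that each of Steps~2--3, applied to a pair $(h,p)$ outside the base cases, reduces the computation to instances that have already been handled.

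For Step~2, the hypothesis $h(r)<n$ together with $h(r)\ge r$ and $h(r)\in [r]\cup\{n\}$ forces $h(r)=r$, so Proposition~\ref{34} (via Theorem~\ref{4}(2)) yields $\cF(h)=\cF(h|_{[r]})\,\mathsf{h}_{n-r}$ with $h|_{[r]}\in\cH_r$; this is an ordinary Hessenberg function in strictly smaller ambient dimension, covered by the outer induction on $n$. For Step~3-(1), when $h(r)=n$ and $|h^{-1}(r)|\le 1$, Corollary~\ref{48} supplies either a projective bundle \eqref{73a} giving $\cF(h)=[n-r+1]_q\,\cF(\kappa_rh)$ with $\kappa_rh\in\cH_{r-1,n}$ (handled by the inner induction on $r$), or the blowup formula \eqref{74} whose second term carries a factor of $q[n-r]_q$ and thus contributes to $A^p(X_h)$ only through $A^{<p}$ of a generalized Hessenberg variety (handled by the middle induction on $p$), while its first term reduces to an element of $\cH_{r-1,n}$.

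The main obstacle will be Step~3-(2), the case $|h^{-1}(r)|\ge 2$, where Corollary~\ref{48} does not apply directly. The plan there is to invoke Corollary~\ref{77}, which constructs a finite chain of modifications from Proposition~\ref{73} transporting $h$ to some $\hat h$ with $\hat h(r)=n$ and $|\hat h^{-1}(r)|\le 1$ (which Step~3-(1) then handles). Each individual modification, via \eqref{75} or \eqref{76}, rewrites $\cF(h)$ as a $\mathbb{Z}[q]$-linear combination of $\cF$ evaluated at a Hessenberg function strictly closer to $\hat h$ in the monovariant of Corollary~\ref{77} (decreasing $\sum_i h(i)$, or lex-larger among those with the same sum), together with correction terms carrying a factor of $q$ coming from generalized Hessenberg varieties serving as blowup centers. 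Iterating across the finite chain expresses $\cF(h)$ as $P(q)\,\cF(\hat h)+q\,R(q)$, where the first summand is computed by Step~3-(1) and the second, owing to its factor of $q$, affects only $A^{<p}$ of strictly simpler generalized Hessenberg varieties, reached by the induction on $p$.

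Putting the pieces together, every non-base invocation of the procedure dispatches to cases with strictly smaller $(n,p)$ in lexicographic order, or with the same $(n,p)$ but smaller $r$, or (inside Step~3-(2)) to a bounded-length expansion ending in such a case. Termination is therefore guaranteed by well-foundedness of the triple induction combined with finiteness from Corollary~\ref{77}, and correctness of each step is immediate from the cited formulas, so the algorithm returns the $S_n$-character of $A^p(X_h)$ as claimed.
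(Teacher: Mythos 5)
Your proposal is correct and follows essentially the same route as the paper, whose justification for Algorithm 2 is precisely the resolution of the forgetful rational map $X_h\dashrightarrow\PP^{n-1}$ via Corollary~\ref{48}, Proposition~\ref{73} and Corollary~\ref{77}; you merely make the triple induction and the degree bookkeeping (the factor of $q$ in \eqref{74}, \eqref{75}, \eqref{76} pushing blowup-center contributions into degrees $<p$, and the finiteness of the modification chain) explicit.
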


    The output of Algorithm 2 is of the form $\sum_{\lambda\vdash n}c_\lambda \mathsf{h}_\lambda$ for some $c_\lambda \in \Z$, where $\mathsf{h}_\lambda$ are the complete homogeneous symmetric polynomials.

\medskip

Examples~\ref{121} and \ref{50} implement Algorithm 2. We provide another example below, in which we compute $\cF(2,4,4,5,5)$ using Algorithm 2.
One can compare this with \cite[Example 2.9]{AN}.

\begin{example}\label{125}
	Let $h=(2,4,4,5,5)$. We have
	\[X_h\dashrightarrow X_{(3,3,4,5,5)}\xrightarrow{~\rho_4~}X_{h'}\]
	where the first rational map is the modification in \eqref{122}. By Corollary~\ref{48}(2), the forgetful map $\rho_4$ is the blowup of $X_{h'}$ along $X_{h''}$ where $h'=\kappa_4(3,3,4,5,5)$ and $h''=\kappa_4\tau_3(3,3,4,5,5):\{1,2,3,5\}\to \{1,2,3,5\}$ are given by 
	\[\begin{split}
		&(h'(1),h'(2),h'(3),h'(5))=(3,3,5,5)\\
		&(h''(1),h''(2),h''(3),h''(5)=(3,3,3,5).
	\end{split}\]
	In particular, by \eqref{74} and Proposition~\ref{34},
	\[\cF(3,3,4,5,5)=\cF(h')+q\cF(h'')=\cF(h')+q[3]_q!\mathsf{h}_{(3,2)}\]
	where the second equality reflects the fact that $X_{h''}$ is the disjoint union of $\binom{5}{3}$ copies of $\Fl(3)$. By \eqref{76}, which reads as \eqref{122b} in this case, we have
	\beq\label{123a}
	\begin{split}
		\cF(h)&=\cF(3,3,4,5,5)+q[4]_q\mathsf{h}_{(4,1)}-q[3]_q!\mathsf{h}_{(3,2)}+q^2[2]_q\mathsf{h}_{(3,1,1)}\\
		&=\cF(h')+q[4]_q\mathsf{h}_{(4,1)}+q^2[2]_q\mathsf{h}_{(3,1,1)}.
	\end{split}
	\eeq 
	
	To compute $\cF(h')$, consider the modification at 1
	\[X_{h'}\dashrightarrow X_{\tau_1h'}\]
	where $\tau_1h'$ is given by $(\tau_1h(1),\tau_1h(2),\tau_1h(3),\tau_1h(5))=(2,3,5,5)$. Since $\tau_1h'$ is equal to $h^{(3)}$ in Example~\ref{50} with $n=5$, we have  
	by \eqref{54}
	\beq \label{123c}\begin{split}
		\cF(\tau_1h')=\cF(h^{(3)})&=\cF(h^{(2)})+q[2]_q^2\mathsf{h}_{(3,2)}\\
		&=\cF(h^{(1)})+q[3]_q\mathsf{h}_{(4,1)}+q[2]_q^2\mathsf{h}_{(3,2)}\\
		&=[5]_q\mathsf{h}_{5}+q[3]_q\mathsf{h}_{(4,1)}+q[2]_q^2\mathsf{h}_{(3,2)}.
	\end{split}\eeq
	Note that \eqref{54}, hence \eqref{123c}, are computation by Algorithm 2.
	By \eqref{75},  
	\beq \label{123b}\begin{split}
		\cF(h')&=[2]_q\cF(\tau_1h')-q\cF(\tau_{1,2}h')\\
		&=[2]_q([5]_q\mathsf{h}_{5}+q[3]_q\mathsf{h}_{(4,1)}+q[2]_q^2\mathsf{h}_{(3,2)})-q[3]_q[2]_q\mathsf{h}_{(3,2)}\\
		&=[2]_q([5]_q\mathsf{h}_{5}+q[3]_q\mathsf{h}_{(4,1)}+q^2\mathsf{h}_{(3,2)})
	\end{split}\eeq
	where the second equality is given by \eqref{123c} and 
	the fact that $X_{\tau_{1,2}h'}$ is the disjoint union of $\binom{5}{2}$ copies of $\Fl(2)\times \Fl_{\{1\}}(3)\cong \PP^1\times \PP^2$.
	
	Combining \eqref{123a} and \eqref{123b}, we have
	\[\cF(h)=[5]_q[2]_q\mathsf{h}_{5}+q([4]_q+[3]_q[2]_q)\mathsf{h}_{(4,1)}+q^2[2]_q\mathsf{h}_{(3,2)}+q^2[2]_q\mathsf{h}_{(3,1,1)}.\]
\end{example}
\medskip

\subsection{Multiplicities of trivial representations}
We compute the multiplicities of the trivial representation in the expansions in $\mathsf{h}_\lambda$ and in the Schur basis respectively. Our algorithms allow another simple proofs of the following formulas in \cite{SW,AHHM,ST}. 
\begin{proposition}\label{106} 
Let $h\in \cH_{r,n}$.
\begin{enumerate}
    \item The coefficient of $\mathsf{h}_{n}$ in the expansion of $\cF(h)$ in $\{\mathsf{h}_\lambda\}_{\lambda\vdash n}$ is
    \[c_{(n)}=[n]_q\prod_{i=1}^{r-1}[h(i)-i]_q,\]
    for an irreducible $h$.
    \item The dimension of the $S_n$-invariant part is
    \[\sum_{\lambda \vdash n}c_{\lambda}=\prod_{i=1}^r[h(i)-i+1]_q.\]
\end{enumerate}
\end{proposition}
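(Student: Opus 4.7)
The plan is to prove both formulas by induction using Algorithm~2 (Proposition~\ref{105}) together with the modifications of Proposition~\ref{73}, checking that each formula is preserved under every step of the recursion.

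First I would address part (2). Frobenius reciprocity gives $F(h):=\sum_\lambda c_\lambda = \dim H^*(X_h)^{S_n}$ (graded by cohomological degree), so the target formula $F(h)=\prod_{i=1}^r[h(i)-i+1]_q$ can be verified by induction: the base case $X_h=\PP^{n-1}$ (when $r=1$, $h(1)=n$) gives $F(h)=[n]_q$; the projective bundle step of Corollary~\ref{48}(1) yields $F(h)=[n-r+1]_q F(\kappa_r h)$, matching the new factor $[h(r)-r+1]_q=[n-r+1]_q$; and under Proposition~\ref{34}, $F$ is multiplicative because $\mathsf{h}_\mu\mathsf{h}_\nu=\mathsf{h}_{\mu\cup\nu}$ preserves the multiplicity of the trivial representation. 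The modification in Proposition~\ref{73}(1) translates, with $m=h(j)=h(j+1)$, into the identity $[m-j+1]_q=[2]_q[m-j]_q-q[m-j-1]_q$, a direct consequence of $[k+1]_q=1+q[k]_q$. The three-term identity from Proposition~\ref{73}(2) and the blowup identity $[b+1]_q[a+1]_q=[a+b+1]_q+q[b]_q[a]_q$ arising in Corollary~\ref{48}(2) are verified similarly by elementary expansion.

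For part (1), I would apply the same inductive strategy to $c_{(n)}(h):=c_{(n)}(\cF(h))$. The crucial preliminary observation is that $c_{(n)}(\cF(h))=0$ whenever $h$ is reducible: indeed by Proposition~\ref{34}, $\cF(h)=\cF(h')\cF(h'')$ with both factors of positive degree, and in $\mathsf{h}_\mu\mathsf{h}_\nu=\mathsf{h}_{\mu\cup\nu}$ the single-part partition $(n)$ cannot arise from nonempty $\mu,\nu$. For irreducible $h\in\cH_{r,n}$ with $h^{-1}(r)=\emptyset$, irreducibility combined with $h(r-1)\in[r]\cup\{n\}$, $h(r-1)>r-1$, and $h(r-1)\ne r$ forces $h(r-1)=n$, so the projective bundle identity appends precisely the factor $[h(r-1)-(r-1)]_q=[n-r+1]_q$ expected by the formula. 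The blowup step of Corollary~\ref{48}(2) and the modifications of Proposition~\ref{73} reduce to the very same $q$-identities appearing in part (2), with the added convenience that reducible sub-problems contribute $0$ by the vanishing observation above.

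The main obstacle I anticipate is in part (1), where modifications can produce intermediate $h'$ that are reducible precisely at the boundary position $i=r$. At such positions the naive product $[n]_q\prod_{i=1}^{r-1}[h'(i)-i]_q$ does not vanish (the product runs only to $r-1$), while $c_{(n)}(h')=0$ geometrically. The resolution will be to use the geometric vanishing directly rather than the naive formula; a careful case analysis then shows that the offending terms (coefficients like $-q\cdot c_{(n)}(h')=0$) drop out of the recursion entirely, leaving simpler two-term identities that reduce to the elementary $q$-identities used in part (2).
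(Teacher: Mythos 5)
Your proposal is correct and follows essentially the same route as the paper: induction via Algorithm~2 with base case $\PP^{n-1}$, checking compatibility of both formulas with the relations of Corollary~\ref{48} and Proposition~\ref{73} through the identity $[a+1]_q[b+1]_q=[a+b+1]_q+q[a]_q[b]_q$ (the paper states this and omits the details). Your additional observations — multiplicativity of the invariant part under Proposition~\ref{34}, and the vanishing $c_{(n)}=0$ for reducible intermediate functions (needed in part (1) exactly when Corollary~\ref{48}(2) produces a center reducible at the last position, where the naive product does not detect the zero) — are correct and simply supply the verification the paper leaves to the reader.
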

\begin{proof}
We prove these using Algorithm 2. When $X_h=\PP^{n-1}$, they trivially hold, since in this case $\cF(h)=[n]_q$ with $r=1$. Hence it suffices to show that the formulas are compatible with the relations given in 
Corollary~\ref{48} and Proposition \ref{73}. 
It can be immediately checked that they are indeed compatible in (1) and (2), using the identity
\[[a+1]_q[b+1]_q=[a+b+1]_q+q[a]_q[b]_q\]
for $a,b\geq 0$. We omit the detail.
\end{proof}
\begin{remark}
 For $h\in \cH_n$, the above formulas match with \cite[Theorem 7.1 and Theorem 6.9]{SW} respectively via the involution $\omega$ in Theorem \ref{62}. On the geometry side, the formula (2) was proved in  \cite{AHHM} by constructing a ring isomorphism $A^*(X_h)^{S_n}\cong A^*(X_h^\mathfrak{n})$, where $X_h^\mathfrak{n}$ denotes the regular nilpotent Hessenberg variety associated to $h\in \cH_n$. The Poincar\'e polynomial of $X_h^\mathfrak{n}$ was computed in \cite{ST}.
\end{remark}

\medskip

By Algorithm 2, we can also compute other coefficients. 
\begin{example}[\cite{SW,AHM}] \label{ex1}
For $1\leq r <n$, let $h^{(r)}\in \cH_{r,n}$ be defined by $h^{(r)}(1)=r$ and $h^{(r)}(i)=n$ for $2\leq i\leq r$. By Corollary~\ref{48}, the forgetful morphism $X_{h^{(r)}}\to \Fl_{r-1}(n)$ is the blowup along $X_{h^{(r-1)}}$ for $r\geq 2$, and $X_{h^{(1)}}$ is the set of coordinate $n$ points in $\PP^{n-1}$. In particular, we have 
\[\cF(h^{(r)})=\frac{[n]_q!}{[n-r+1]_q!}\mathsf{h}_n +q[n-r]_q\cF(h^{(r-1)})\]
for $r\geq 2$, and $\cF(h^{(1)})=\mathsf{h}_{(n-1,1)}$. From this, it follows that
\[\cF(h^{(r)})=q^{r-1}\frac{[n-2]_q!}{[n-r-1]_q!}\mathsf{h}_{(n-1,1)} \quad \text{ modulo } ~\mathsf{h}_n.\]

We also remark that the above description allows a geometric basis which is $S_n$-invariant and the stabilizer of each element of which is isomorphic to the Young subgroup $S_{n-1}\times S_1$ or  $S_n$.
\end{example}

\bigskip

\section{Degree $k+1$ for $h_k$}\label{110}
In this section, we provide an alternative proof of some recent results on the representations in low degrees  
in \cite{AMS} by our algorithms. Using these and Algorithm 2, we compute the $S_n$-representations on the $(k+1)$-st cohomology $A^{k+1}(X_{h_k})$ of the Hessenberg variety $X_{h_k}$. (See Definition~\ref{84} for $h_k$.) 

We denote by $M^\lambda$ the permutation module of $S_n$ corresponding to a partition $\lambda$ of $n$. Recall that $\ch~ M^\lambda=\mathsf{h}_\lambda$.

\begin{theorem} \label{111}
Let $n\geq k+2$.
\begin{enumerate}
    \item When $k=2$,
  \[  \begin{split}A^3(X(h_2))=&\sum_{i=0}^3\binom{n-2}{i}M^{(n)}+\Big(n^2-7n+14\Big)M^{(n-1,1)}\\
    &+(n-4)M^{(n-2,2)}+\sum_{i=3}^{n-3}M^{(n-i,i)}.\end{split}\]
    \item When $k\geq 3$,
    \[A^{k+1}(X(h_k))=aM^{(n)}+\Big(n^2-(k+3)n+2k+1\Big)M^{(n-1,1)},\]
    where the coefficient $a$ of $M^{(n)}$ is determined by any of the following two ways:
    \begin{enumerate}
        \item $[n]_q[k-1]_q![k]_q^{n-k}=\cdots + aq^3+\cdots$, or
        \item $[k]_q![k+1]_q^{n-k}=\cdots + \Big(a+n^2-(k+3)n+2k+1\Big)q^3+\cdots$.
    \end{enumerate}
\end{enumerate}
\end{theorem}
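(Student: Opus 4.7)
The plan is to apply Algorithm~2 (Proposition~\ref{105}) together with the previously established Theorems~\ref{114} and \ref{113}, which handle $A^{\le k}(X_{h_k})$ and supply the base of an induction in degree. Since degree $k+1$ is only one step beyond what is known, essentially a single blowup reduction will suffice once one identifies the correct morphism.

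The two numerical formulas for $a$ in part (2) are consequences of Proposition~\ref{106} once the decomposition $A^{k+1}(X_{h_k}) = aM^{(n)}\oplus bM^{(n-1,1)}$ has been established. Indeed, Proposition~\ref{106}(1) gives that the coefficient of $\mathsf{h}_n$ in $\cF(h_k)$ is
\[[n]_q\prod_{i=1}^{n-2}[h_k(i)-i]_q = [n]_q[k-1]_q![k]_q^{n-k},\]
whose $q^{k+1}$-coefficient is $a$, matching (a); and Proposition~\ref{106}(2) gives that the total multiplicity of the trivial representation in $A^{k+1}(X_{h_k})$ equals $[q^{k+1}]\prod_{i=1}^{n-1}[h_k(i)-i+1]_q = [q^{k+1}]\bigl([k+1]_q^{n-k}[k]_q!\bigr)$, and since each $M^\lambda$ contains exactly one copy of the trivial $S_n$-summand this equals $a+b$, yielding formula (b).

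The substantive step is to prove the decomposition itself for $k\ge 3$. I would begin with the forgetful morphism $\rho_{n-1}:X_{h_k}\to X_{\kappa_{n-1}h_k}$: for $n\ge k+2$ one has $h_k^{-1}(n-1)=\{n-k-1\}$, so by Corollary~\ref{48}(2) this is the blowup of codimension $2$ along the generalized Hessenberg variety $X_{\kappa_{n-1}\tau_{n-k-1}h_k}$. The $S_n$-equivariant blowup formula \eqref{53}, combined with Propositions~\ref{21} and \ref{33}, then gives
\[A^{k+1}(X_{h_k}) \cong A^{k+1}(X_{\kappa_{n-1}h_k}) \oplus A^{k}(X_{\kappa_{n-1}\tau_{n-k-1}h_k})\]
as $S_n$-representations. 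Both summands are cohomology of strictly simpler generalized Hessenberg varieties in lower degrees, so one iterates Algorithm~2 and invokes Theorem~\ref{114} to control them.

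The main obstacle is the bookkeeping: one must verify that every blowup center and base variety appearing in the recursion has $A^j$-cohomology, for the relevant $j\le k+1$, supported only on $\mathsf{h}_n$ and $\mathsf{h}_{(n-1,1)}$ when $k\ge 3$. This should follow from the inductive hypothesis, but care is needed to confirm that no new partition is introduced when peeling off indices, and the hypothesis $k\ge 3$ is what provides enough room in $h_k$ for this to hold. For part (1) with $k=2$, the same approach applies but the narrower gap forces additional blowup centers into degree $k+1=3$; these should contribute the terms $(n-4)M^{(n-2,2)}$ and a single $M^{(n-i,i)}$ for $3\le i\le n-3$, whose counts should fall out of an explicit enumeration of the centers produced by Corollaries~\ref{48} and \ref{77} during the reduction.
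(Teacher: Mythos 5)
Your setup is the same as the paper's Step 1: the Algorithm~2 chain $X_{h_k}\to\cdots\to X_{\kappa_{>k+1}h_k}\to \Fl_k(n)$, whose first map $\rho_{n-1}$ is indeed the codimension-2 blowup along $X_{\kappa_{n-1}\tau_{n-k-1}h_k}$, and your derivation of the two formulas for $a$ from Proposition~\ref{106} is correct (with the exponent read as $q^{k+1}$, which is the consistent reading). The problem is what you dismiss as ``bookkeeping.'' After the full chain of forgetful maps, the contribution of the codimension-$j$ center $Z_j=X_{\kappa_{>n-j}\tau_{n-j-k+1}h_k}$ to $A^{k+1}(X_{h_k})$ is $\bigoplus_{i=1}^{j-1}A^{k+1-i}(Z_j)$, so you need $A^{\le k}(Z_j)$ for every $j$, including the top degree $k$. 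But the generalized Hessenberg function of $Z_j$ has one index with gap $k-1$ (it only satisfies $h(i)\ge\min\{i+k-1,n\}$), so Theorem~\ref{113} controls $Z_j$ only in degrees $\le k-1$, and Theorem~\ref{114} only in degree $1$; the degree-$k$ part of $Z_j$ is exactly ``one degree beyond the threshold'' — the same type of problem as the theorem you are trying to prove. Iterating Algorithm~2 on $Z_j$ does not help directly either: there $h^{-1}(n-j)$ has two elements, so one must first pass through the modifications of Proposition~\ref{73}/Corollary~\ref{77}, producing signed terms, and the induction you invoke is never set up so as to close; as sketched it is circular rather than a finite reduction to known quantities.

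The paper closes precisely this gap with two ingredients that your proposal never identifies. First, it uses the $\Fl(j)$-bundle $X_{h'}\to Z_j$ together with the transpose duality $A^*(W_j)\cong A^*(W_j^t)$ (the equivariant isomorphism induced by $(V_i)\mapsto((\C^n/V_{n-i})^*)$, cf.\ \cite[Theorem 1.2]{AN}) to replace $Z_j$ by an ordinary Hessenberg variety $W_j$ with $h^{W_j}(i)=j+k-1$ for $i\le j$ and $h^{W_j}(i)=h_k(i)$ for $i>j$, so that $A^*(W_j)=[j]_q!\,A^*(Z_j)$. Second, it reduces $W_j$ by a further blowup chain to the two-block generalized Hessenberg variety $\overline{W}_j$ of type $h_{j,n-j-k+1}$, whose low-degree cohomology is computed by the explicit recursion of Lemma~\ref{119}, while the intermediate centers $W_j^{l+k}$ contain $h_{k-1}$ and are therefore within reach of Theorem~\ref{113} in the degrees $\le k-1$ actually needed there. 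The quantitative outputs \eqref{eq:pf6} and \eqref{eq:pf7} of this two-step maneuver are what produce the coefficient $n^2-(k+3)n+2k+1$ for $k\ge3$ and the extra terms $(n-4)M^{(n-2,2)}+\sum_{i=3}^{n-3}M^{(n-i,i)}$ for $k=2$; without them your plan has no mechanism for computing the degree-$k$ classes on the centers, so the proof does not go through as proposed.
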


The equalities (a) and (b) are from Proposition~\ref{106} (1) and (2) respectively.

\medskip
The rest of this section is devoted to a proof of Theorem \ref{111}. 
In the course of the proof, we use several known results as well as Algorithm 2. In the first two subsections, we reprove the results in a recent paper \cite{AMS} on the representations on low degree cohomology, using the algorithms in \S\ref{91}. Then, we prove Theorem \ref{111} in the last subsection.

\subsection{Degree 1} For degree 1, the representations are completely known. Let $h\in \cH_{r,n}$.

\begin{theorem}\cite[Theorem 6.4]{CHL} \cite[Theorem 5.2]{AMS}  \label{114}
Let $h\in \cH_{r,n}$ be irreducible. For $1\leq i<r$, let
\[\beta_i:=\begin{cases}(n-i,i) &\text{ if } h(i-1)=i,~h(i)=i+1,\\
            (n-1,1) & \text{ if }h(i-1)=h(i)=i+1\\
            (n) & \text{ otherwise}, \end{cases}\]
where we set $h(0)=1$. Then we have the equality of $S_n$-representations
\[A^1(X_h)=M^{(n)}+\sum_{i=1}^{r-1}M^{\beta_i}.\]
\end{theorem}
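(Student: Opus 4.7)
The plan is to prove Theorem~\ref{114} by induction on $r$, with Algorithm~2 (Proposition~\ref{105}) as the main tool. The base case $r=1$ is immediate: irreducibility forces $h(1)=n$, so $X_h=\PP^{n-1}$ and $A^1(X_h)=M^{(n)}$, while the sum on the right-hand side is empty.

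For the inductive step with $r\geq 2$, irreducibility together with $h(r)\in [r]\cup\{n\}$ forces $h(r)=n$. Suppose first that $|h^{-1}(r)|\leq 1$. If $h^{-1}(r)=\emptyset$, Corollary~\ref{48}(1) realizes $\rho_r:X_h\to X_{\kappa_r h}$ as a projective bundle, so by Propositions~\ref{21} and \ref{24} we get $A^1(X_h)=A^1(X_{\kappa_r h})\oplus M^{(n)}$ as $S_n$-representations. If $h^{-1}(r)=\{j\}$, Corollary~\ref{48}(2) makes $\rho_r$ a blowup along $Z:=X_{\kappa_r\tau_j h}$ of codimension $n-r+1\geq 2$, and the blowup formula in degree~$1$, combined with Propositions~\ref{21} and \ref{33}, yields $A^1(X_h)=A^1(X_{\kappa_r h})\oplus A^0(Z)$. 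A short analysis via Theorem~\ref{4}(2) computes $A^0(Z)$: if $j<r-1$, then $\kappa_r\tau_j h$ is irreducible, so $Z$ is irreducible and $A^0(Z)=M^{(n)}$; if $j=r-1$, then $(\kappa_r\tau_j h)(r-1)=r-1$, so $Z$ decomposes as the disjoint union of $\binom{n}{r-1}$ isomorphic pieces indexed by $(r-1)$-subsets of $[n]$, giving $A^0(Z)=M^{(n-r+1,r-1)}$.

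The combinatorial matching is then direct. The inductive hypothesis applied to $\kappa_r h\in \cH_{r-1,n}$ gives $A^1(X_{\kappa_r h})=M^{(n)}+\sum_{i=1}^{r-2}M^{\beta'_i}$, and since $\kappa_r h$ agrees with $h$ on $[r-2]$ (and on $[r-1]$ in the projective bundle case), we have $\beta'_i=\beta_i$ for all $i\leq r-2$. It remains to compute $\beta_{r-1}$ for $h$. In the projective bundle case and in the blowup subcase $j<r-1$, irreducibility together with $r-1\notin h^{-1}(r)$ and monotonicity forces $h(r-1)=n$, so neither special condition in the definition of $\beta_{r-1}$ holds and $\beta_{r-1}=(n)$, matching the new $M^{(n)}$ summand. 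In the blowup subcase $j=r-1$, $h(r-1)=r$ together with irreducibility and monotonicity forces $h(r-2)=r-1$, triggering the first condition and giving $\beta_{r-1}=(n-r+1,r-1)$, which matches $A^0(Z)$.

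The remaining case $|h^{-1}(r)|\geq 2$ is handled by first applying Corollary~\ref{77} to obtain a sequence of modifications via Proposition~\ref{73} reducing $h$ to some $\hat h$ with $|\hat h^{-1}(r)|\leq 1$. Each modification gives an identity of $\cF$'s whose coefficient of $q$ expresses $\ch A^1(X_h)$ as a signed combination of $\ch A^1$ and $\ch A^0$ of nearby generalized Hessenberg varieties, and every $\ch A^0$ term is a permutation character computable from Theorem~\ref{4}(2). The main obstacle will be the combinatorial bookkeeping at this stage: one must track how the consecutive pairs $(h(i-1),h(i))$ that determine $\beta_i$ evolve under each $\tau_j$ or $\tau_j^{j_0}$ step and verify that the signed sum of permutation characters produced by \eqref{75} and \eqref{76} accounts exactly for the net change in $\sum_{i=1}^{r-1}M^{\beta_i}$ as one passes from $\hat h$ back to $h$. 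Since each $A^0$ contribution from a reducible subvariety has the shape $M^{(n-s,s)}$ with $s$ determined by the local modification site, this reduces to a finite case check.
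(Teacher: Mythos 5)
Your proposal is essentially correct but follows a genuinely different reduction strategy from the paper. The paper's proof invokes Algorithm~1: one modifies an irreducible $h\in\cH_{r,n}$ down to $\kappa_{r+1}\cdots\kappa_{n-1}h_1$ (whose $A^1$ is read off from the iterated blowup structure of $X_{h_1}$ in Example~\ref{50}) and checks that the stated formula is stable under the modular-law identities \eqref{75} and \eqref{76}. You instead run Algorithm~2: induct on $r$ through the forgetful map $\rho_r$, handling $|h^{-1}(r)|\leq 1$ via the explicit projective-bundle/blowup structure of Corollary~\ref{48}, and using Corollary~\ref{77} plus the modular law only to reduce $|h^{-1}(r)|$ when needed. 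Both proofs ultimately defer the same core verification --- compatibility of the $\beta_i$-formula with \eqref{75} and \eqref{76} --- to a ``finite case check'' or ``simple computation''; your version needs it only along the chain of modifications in Corollary~\ref{77}, whereas the paper needs it along the full chain down to $\kappa_{>r}h_1$, but the local combinatorics are the same. What your approach buys is a cleaner identification of where each summand $M^{\beta_{r-1}}$ geometrically comes from (a new $\PP^{n-r}$-bundle class, or $A^0$ of the blowup center).

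One imprecision worth flagging in your $|h^{-1}(r)|=1$ analysis: you assert that ``$\kappa_r h$ agrees with $h$ on $[r-2]$.'' This is false when $j<r-1$, because $\kappa_r h(j)=n\neq r=h(j)$. The conclusion $\beta'_i=\beta_i$ for $i\leq r-2$ does hold, but one must check it directly: at $i=j$ one compares $(h(j-1),r)$ with $(h(j-1),n)$ and observes that both fall in the ``otherwise'' branch because $r\neq j+1$ and $n\neq j+1$ for $j<r-1<n-1$; at $i=j+1$ one compares $(r,n)$ with $(n,n)$ and again both yield $(n)$; all other $i$ are unaffected. With this repaired, your argument for $|h^{-1}(r)|\leq 1$ is complete and correct.
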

\begin{proof} We use a version of Algorithm 1 for generalized Hessenberg varieties. By the same argument in the proof of Corollary~\ref{78}, one can modify a given irreducible $h\in \cH_{r,n}$ to  
$\k_{>r}h_1 \in \cH_{r,n}$ where $\k_{>r}h_1=\kappa_{r+1}\cdots\kappa_{n-1}h_1$.

Since the assertion holds for $\k_{>r}h_1$ by observing that $A^1(X_{\k_{>r}h_1})=\sum_{i=1}^{r-1}M^{(n-i,i)}$ as in Example~\ref{50}, it suffices to show that the assertion is compatible with \eqref{75} and \eqref{76}. 
This can be checked by simple computation. 
\end{proof}

\subsection{Pullback morphisms from $\Fl_r(n)$} 
In this subsection, we prove another result of \cite{AMS} on the representations $A^p(X_h)$ in low degrees, under some conditions.
\begin{theorem} \cite[Theorem 6.1, Corollary 6.2]{AMS} \label{113}
Let $h\in \cH_{r,n}$ and $k\geq 2$. Suppose that $h(i)\geq \min\{i+k,n\}$ for $i\in [r]$. 
    \begin{enumerate}
        \item for each $p<k$, the pullback
    \[A^{p}(\Fl_r(n))\longrightarrow A^{p}(X_h)\]
    by the natural inclusion is an isomorphism, and
        \item when $p=k$,  
        \[A^{k}(X_h)=A^k(\Fl_r(n))+m\Big(M^{(n-1,1)}-M^{(n)}\Big),\]
    where $m=|\{i \in [r]~:~ h(i)=i+k<n\}|$.
\end{enumerate}
\end{theorem}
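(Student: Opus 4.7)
The plan is to induct on $r$ using the projective-bundle/blowup structure of $\rho_r\colon X_h\to X_{\kappa_r h}$ from Corollary~\ref{48}. For the base case $r=1$, the hypothesis combined with $k\geq 2$ and $h(1)\in\{1,n\}$ forces $h(1)=n$, so $X_h=\PP^{n-1}=\Fl_1(n)$ and the claim is immediate. For the inductive step, one first observes the hypothesis likewise forces $h(r)=n$ (since $h(r)\in\{1,\ldots,r,n\}$ and $h(r)\geq\min\{r+k,n\}>r$). If $|h^{-1}(r)|\geq 2$, the plan is to first apply Corollary~\ref{77} to reduce to $|h^{-1}(r)|\leq 1$, tracking low-degree cohomology through the formulas \eqref{75} and \eqref{76}; the function $\kappa_rh\in \cH_{r-1,n}$ inherits the hypothesis, so the inductive step applies to it.

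In Case (i), $|h^{-1}(r)|=0$: both $\rho_r$ and $\Fl_r(n)\to\Fl_{r-1}(n)$ are $\PP^{n-r}$-bundles arising from the same vector bundle $\O^{\oplus n}/\cV_{r-1}$, so the projective bundle formula decomposes both $A^p(X_h)$ and $A^p(\Fl_r(n))$ as direct sums over powers of $\zeta=c_1((\cV_r/\cV_{r-1})^*)$, and the pullback acts coefficient-wise. The inductive hypothesis applied to $\kappa_rh$ gives (1) and (2) immediately, with $m'=m$ since $h(r)=n$ does not contribute to $m$. In Case (ii), $h^{-1}(r)=\{j\}$: $\rho_r$ is the blowup along $Z=X_{\kappa_r\tau_jh}$ of codimension $d=n-r+1$, and the blowup formula \eqref{53} decomposes
\[
A^p(X_h)=\rho_r^*A^p(X_{\kappa_rh})\oplus\bigoplus_{i=0}^{d-2}\jmath_*\bigl(\xi^i\cdot A^{p-1-i}(Z)\bigr).
\]
The key geometric input to identify the image of the pullback from $\Fl_r(n)$ is that away from the exceptional divisor $E$, the line bundle $\cV_r/\cV_{r-1}|_{X_h}$ coincides with the quotient $\cV_j\to\cV_j/(\cV_j\cap x^{-1}\cV_{r-1})$, which yields an explicit linear relation between $\zeta|_{X_h}$, classes pulled back by $\rho_r^*$, and $[E]$. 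Note also that $\kappa_r\tau_jh$ satisfies the original hypothesis unless $j=r-k$, in which case it satisfies the hypothesis with $k$ replaced by $k-1$; this distinction is exactly what governs whether the exceptional divisor contributes a new class in degree $p=k$.

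The hardest part will be Case (ii): precisely identifying which classes in the blowup decomposition lie in the image of $A^p(\Fl_r(n))\to A^p(X_h)$ in low degrees, and checking the combinatorial match $m=m'+\delta_{j,r-k}$ at degree $p=k$. This requires carefully tracking the pullbacks of $\zeta$ and the Chern classes from $\Fl_{r-1}(n)$ through the blowup, and will likely involve a secondary induction on $k$ to compute $A^p(Z)$ for $p<k$, with the base case $k=2$ treated directly --- precisely the case $j=r-2$ is where the exceptional divisor first contributes the $M^{(n-1,1)}-M^{(n)}$ correction in degree $p=k$.
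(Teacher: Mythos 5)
Your induction on $r$ through $\rho_r\colon X_h\to X_{\kappa_rh}$ is a genuinely different organization from the paper's proof (which first reduces to $h=h_k$ by the modifications of Corollary~\ref{78} and then descends to $\Fl_k(n)$ by Algorithm~1), and your base case, your Case (i), and the dichotomy $j<r-k$ versus $j=r-k$ for the center $X_{\kappa_r\tau_jh}$ are all correct. But the two steps carrying the real weight are missing. First, when $\lvert h^{-1}(r)\rvert\ge 2$ you propose to reduce via Corollary~\ref{77} by ``tracking low-degree cohomology through \eqref{75} and \eqref{76}''; those are identities of equivariant Poincar\'e polynomials, i.e.\ of characters, so while they can in principle handle statement (2), they cannot prove statement (1), which concerns a specific map: knowing that $A^p(\Fl_r(n))$ and $A^p(X_h)$ are abstractly isomorphic $S_n$-modules does not show that the restriction map is an isomorphism, and injectivity of the non-equivariant pullback is not automatic. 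What is needed is a mechanism transporting the pullback-isomorphism statement across the modification diagrams of Proposition~\ref{73}, in which $X_h$ and $X_{h'}$ are blowups (or a $\PP^1$-bundle) over the common base $X_{\kappa_jh}$, all compatibly embedded in projective bundles over partial flag varieties; this is exactly the content of the paper's Lemma~\ref{112}, and nothing in your proposal plays that role.

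Second, Case (ii), which you yourself flag as the hardest part, is only a plan: the blowup decomposition, the relation $c_1(\cV_r/\cV_{r-1})=\rho_r^*c_1(\cV_j/\cV_{j-1})+[E]$ (your generic identification of $\cV_r/\cV_{r-1}$, made precise by the vanishing locus of $x\colon \cV_j/\cV_{j-1}\to\cV_r/\cV_{r-1}$), and a ``secondary induction on $k$'' are named but not executed, and it is precisely here that one must verify that the pullback respects the two decompositions and is an isomorphism summand by summand. Note that Lemma~\ref{112} applies verbatim to your setting: take $Z=X_{\kappa_r\tau_jh}\subset X=X_{\kappa_rh}\subset P=\Fl_{r-1}(n)$, $\widetilde{P}=\Fl_r(n)=\PP(\sO^{\oplus n}/\cV_{r-1})$, and $E=Z\times_P\widetilde{P}$ by Corollary~\ref{48}(1) applied to $\tau_jh$; combined with your induction on $k$ for the center (which satisfies the hypothesis with $k$ if $j<r-k$ and only with $k-1$ if $j=r-k$, in which case its degree-$(k-1)$ part contributes exactly one copy of $M^{(n-1,1)}-M^{(n)}$ to degree $k$), this would close the argument. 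As written, though, the decisive verification is absent, so the proposal is an outline rather than a proof.
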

\begin{proof}
We prove this by using a combination of Corollary~\ref{78} and Algorithm 1. Although Corollary~\ref{78} states for $h\in \cH_n$ and $h_k$, the same holds for $h\in \cH_{r,n}$ and $\k_{>r}h_k$.

By abuse of notation, we write $h_k$ also for $\k_{>r}h_k$. There must be no confusion since $r$ remains fixed until we use Algorithm 1.

The proof is divided into two steps. Firstly, we reduce the assertions to the case $h=h_k$ by Corollary~\ref{78} and induction on $k$. Then we prove the assertions for $h_k$ using Algorithm 1.

(1) The assertion actually holds for $k=1$, since $A^0(X_h)=\mathbb{Q}$ for an irreducible $h$. By induction on $k$, we assume that the assertion holds up to $k-1$. 

Let us reduce the assertion to the case $h=h_k$. Key ingredients are Corollary~\ref{78} and Lemma~\ref{112} below. Suppose that $h$ is modified to $h'$, so $h'=\tau_jh$ or $\tau_j^{j_0}h$. If $h$ satisfies the inequality $h(i)\geq \min\{i+k,n\}$, then so does $h'$. Furthermore, if $h''=\tau_{j,j+1}h$ or $\tau_{j_0,j}h$, which corresponds to one of the blowup centers in the modifications, then $h''$ satisfies the inequality $h''(i)\geq \min\{i+k-1,n\}$ for $i\in [r]$, so the assertion holds for $h''$ by induction. Therefore, when $h'=\tau_{j}^{j_0}h$, by applying Lemma~\ref{112} below to the left and the right squares of the modification diagram
\[\xymatrix{\Fl_r(n)\ar[r] & \Fl_{[r]-\{j\}}(n) & \Fl_r(n)\ar[l]\\
X_h \ar[u] \ar[r]^{\rho_j}& X_{\kappa_jh}\ar[u] & X_{h'}\ar[l]_{\rho_j}\ar[u]}\]
where both $\rho_j$ are blowups, we conclude that the assertion holds for $h$ if and only if it holds for $h'$. 
When $h'=\tau_jh$ so that $\rho_j$ in the left square is $\PP^1$-bundle, applying Lemma~\ref{112} to the right square is enough to conclude that the assertion holds of $h$ if and only if it holds for $h'$, again by  induction. Repeating this, by Corollary~\ref{78}, it suffices to prove the assertion for $h_k$.

Now let $h=h_k$. If $k\geq r$, then $X_h=\Fl_r(n)$ for which the assertion holds. Assume that $k<r$ so that $h(1)=k+1\leq r$. Then by using Algorithm 1 and by applying Lemma~\ref{112} to the blowup diagram in Theorem~\ref{42}, we reduce the assertion to the case $X_h=\Fl_k(n)$, for which the assertion is obvious. By induction, we may assume that the assertion holds for the blowup centers. This proves (1).

(2) By Proposition~\ref{106}, it suffices to prove that the coefficient of $M^{(n-1,1)}$ in $A^k(X_h)$ is equal to the number of $i$ with $h(i)=i+k<n$, and that the coefficients of $M^\lambda$ with $\lambda\neq (n),(n-1,1)$ always vanish, in the expansion in $M^\lambda$.

One can check that the coefficients of $M^\lambda$ with $\lambda\neq (n)$ in the assertion are compatible with the formulas in Proposition~\ref{73}. In the course of reduction, one also needs Theorem~\ref{114} when $k=2$. As a result, we may assume $h=h_k$. If $k\geq r$, then $X_h=\Fl_r(n)$ for which the assertion holds. If $k<r$, then by Algorithm 1, $\rho_{k+1}\circ\cdots\circ\rho_r:X_h \to\Fl_k(n)$ is the composition of an iterated sequence of blowups. Since the blowup center contributes to the degree $k$ part with one copy of $M^{(n-1,1)}$ for each $\rho_j$, we find that $A^k(X_h)=(r-k)M^{(n-1,1)}$ modulo $M^{(n)}$. The assertion for $h_k$ follows by Proposition~\ref{106}.
\end{proof}

\begin{lemma} \label{112}
    Let $Z\subset X\subset P$ be smooth projective varieties. Let $\widetilde{X}=\mathrm{Bl}_ZX$ be the blowup of $X$ along $Z$ with exceptional divisor $E$. Let $\widetilde{P}$ be a projective bundle over $P$. Suppose that there exists a closed embedding $\tX\xrightarrow{}\tP$ over $P$ such that the big square of the commutative diagram 
    \[\xymatrix{E\ar[r] \ar[d]_-{p} & \tX \ar[r] \ar[d] & \tP \ar[d]^-{\pi} \\
    Z \ar[r] & X \ar[r] & P}\]
    is Cartesian. Then for $p>0$, any two of the following imply the other.
    \begin{enumerate}
        \item The pullback $A^i(\tP) \xrightarrow{} A^i(\tX)$ is an isomorphism for $i\leq p$.
        \item The pullback $A^i(P)\xrightarrow{}A^i(X)$ is an isomorphism for $i\leq p$.
        \item The pullback $A^i(\tP)\xrightarrow{}A^i(E)$ is an isomorphism for $i<p$.
    \end{enumerate}
\end{lemma}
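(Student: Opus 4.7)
The plan is to understand the map $F := (A^i(\tP)\to A^i(\tX))$ in coordinates dictated by the projective bundle formula for $\tP\to P$ and the blowup formula for $\tX\to X$, exhibit $F$ as block upper triangular with diagonal blocks precisely the restriction maps appearing in (1)--(3), and then read off the equivalences by a determinant argument.

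Since $E=\tP|_Z$ by the Cartesian hypothesis, both $\tP\to P$ and $E\to Z$ are $\PP^r$-bundles with $r=\mathrm{codim}_X Z - 1$, and the projective bundle formula gives $A^i(\tP)=\bigoplus_{k=0}^r A^{i-k}(P)\cdot\xi^k$ and $A^i(E)=\bigoplus_{k=0}^r A^{i-k}(Z)\cdot\xi^k$. Under these decompositions, the restriction $\gamma:A^i(\tP)\to A^i(E)$ is simply the direct sum of the restrictions $R_{i-k}:A^{i-k}(P)\to A^{i-k}(Z)$, so (3) is equivalent to the statement that $R_{i'}$ is an isomorphism for all $i'<p$.

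Next we analyze (1) using the blowup decomposition $A^i(\tX)=\tau^*A^i(X)\oplus\bigoplus_{k=1}^r j_*\bigl(p^*A^{i-k}(Z)\cdot\eta^{k-1}\bigr)$ with $\eta=c_1(\O_E(1))$, and compute $F(\pi^*\alpha\cdot\xi^k)=\tau^*(\alpha|_X)\cdot\widetilde\xi^k$ for $\widetilde\xi:=\xi|_{\tX}$. Comparing restrictions to $E$ via $\O_{\tX}(E)|_E=\O_E(-1)$ and the fact that $\xi|_E-\eta$ lies in $p^*A^1(Z)$ yields $\widetilde\xi=\tau^*\omega-[E]$ for some $\omega\in A^1(X)$. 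Expanding $\widetilde\xi^k$ by the binomial theorem, using the identity $[E]^l=(-1)^{l-1}j_*\eta^{l-1}$ for $l\geq 1$ together with the projection formula, we obtain
\[F(\pi^*\alpha\cdot\xi^k)\;=\;\tau^*(\alpha|_X\cdot\omega^k)\;-\;\sum_{l=1}^k\binom{k}{l}\,j_*\bigl(p^*(\alpha|_Z\cdot\mu^{k-l})\cdot\eta^{l-1}\bigr),\]
where $\mu:=\omega|_Z$. This exhibits $F$ as block upper triangular in the source index $k$, with diagonal blocks $F_{0,0}=f:A^i(P)\to A^i(X)$ and $F_{k,k}=-R_{i-k}$ for $k\geq 1$.

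Since all the Chow groups in question are finitely generated and torsion-free (by Theorem~\ref{4}(3) and its standard analogue for the partial flag varieties appearing in the applications), a block upper triangular map with square diagonal blocks satisfies $\det F=\prod_k\det F_{k,k}$, so $F$ is an isomorphism if and only if each diagonal block is. From this the three implications follow: (2)+(3) makes every $F_{k,k}$ an isomorphism at each $i\leq p$, hence (1); (1)+(3) gives that $F$ and every $F_{k,k}$ for $k\geq 1$ are isomorphisms, and matching of total ranks then forces the $k=0$ block to be square with determinant $\pm 1$, yielding (2); finally (1)+(2) is handled by induction on $l<p$, noting that at degree $i=l+1$ every $F_{k,k}$ with $k\neq 1$ is an isomorphism (the $k=0$ block from (2), the blocks with $k\geq 2$ by the inductive hypothesis), so $F$ being an isomorphism forces $F_{1,1}=-R_l$ to be one as well. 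The main technical point is verifying the explicit form $\widetilde\xi=\tau^*\omega-[E]$, particularly the coefficient $-1$ of $[E]$, which is what guarantees that the diagonal blocks faithfully detect the restriction maps $R_l$.
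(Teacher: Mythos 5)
Your proposal is correct and takes essentially the same route as the paper: both decompose $A^*(\tP)$ by the projective bundle formula and $A^*(\tX)$ by the blowup formula, use the fact that $\sO_\pi(-1)|_E$ and $\sO_{\tX}(E)|_E$ differ only by a class pulled back from $Z$ (your $\widetilde\xi=\tau^*\omega-[E]$) to see that the comparison map is triangular with diagonal blocks the restrictions $A^i(P)\to A^i(X)$ and $A^{i-k}(P)\to A^{i-k}(Z)$, and then argue componentwise, so you are in effect just making the paper's ``maps componentwisely'' step explicit. The only caveat is that your determinant/rank bookkeeping assumes the Chow groups are finitely generated and free, which the lemma as stated does not grant in general (it does hold for every variety to which the paper applies it); this is harmless, since the same conclusions follow from the triangular structure alone by clearing the off-diagonal blocks with the invertible diagonal ones, with no finiteness needed.
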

\begin{proof} Note that $\sO_{\pi}(-1)|_E$ is isomorphic to $\sO_{\tX}(E)|_E$ modulo $\Pic Z$. Let $Z\subset X$ be of codimension $r+1$.
Then we have
\[A^*(\tX)\cong \pi|_{\tX}^*A^*(X) \oplus \bigoplus_{j=1}^r p^*A^*(Z)\Big(\cap ~\sO_{\tX}(E)|_E\Big)^{j},\]
while
\[A^*(\tP)\cong \pi^* A^*(P)\oplus \bigoplus_{j=1}^r \pi^*A^*(P)\Big(\cap \sO_\pi(-1)\Big)^{j}.\]
Since the pullback homorphisms map componentwisely, for $p>0$ any two of (1), (2) and ($3'$) below imply the other.
\begin{enumerate}
    \item[($3'$)]  The pullback $A^i(P)\xrightarrow{}A^i(Z)$ is an isomorphism for $i<p$.
\end{enumerate}
Since (3) and ($3'$) are equivalent, the assertion follows.
\end{proof}

\begin{remark}
Using the techniques in the proof of Theorem~\ref{113}, one can also prove the following: for $h,h'\in \cH_n$ abelian with $h\leq h'$, the pullback
\[A^*(X_{h'})\longrightarrow A^*(X_h)\]
by the canonical inclusion is an isomorphism in degrees up to
\[\min\{h(i)-i: h(i)<h'(i),~1\leq i<n\}-1.\]
\end{remark}

\subsection{Proof of Theorem~\ref{111}}
Thanks to Proposition~\ref{106}, it suffices to compute the coefficients of $M^\lambda$ with $\lambda\neq(n)$ in the expansion in $M^\lambda$. All the computation below is written modulo $M^{(n)}$.

\underline{Step 1}. By Algorithm 2, we have a sequence of iterated blowups
\[X_{h_k}\xrightarrow{~\rho_{n-1}~}~\cdots~ \xrightarrow{~\rho_{k+2}~}X_{\k_{>k+1}h_k}\xrightarrow{~\rho_{k+1}~}\Fl(k,n),\]
where $\k_{>k+1}h_k=\tau_{k+2}\cdots\tau_{n-1}h_k$.
As in Example~\ref{ex1},
\begin{equation}\label{eq:pf1}
    A^{k+1}(X_{\k_{>k+1}h_k})= kM^{(n-1,1)}.
\end{equation}
For $2\leq j \leq n-k-1$, denote the blowup center of $\rho_{n-j+1}$ by 
\[Z_j:=X_{\k_{>n-j}\tau_{n-j-k+1}h_k}\subset X_{\k_{>n-j}h_k},\]
where the codimension is $j$. By the blowup formula, 
\begin{equation}\label{eq:pf2}
    A^*(X_{h_k})=A^*(X_{\k_{>k+1}h_k})\oplus q\bigoplus_{j=2}^{n-k-1}[j-1]_q A^*(Z_j).
\end{equation}
So it suffices to compute $A^{\leq k}(Z_j)$.

\underline{Step 2}. Note that the (iterated) forgetful morphism
\[ X_{h'}\longrightarrow Z_j,\]
which forgets the subspaces of dimension $n-1,\cdots,n-j+1$ in each flag, is a $\Fl(j)$-bundle, where $h'$ is a Hessenberg function defined by $h'(n-j+1)=\cdots=h'(n-1)=n$ as in Theorem~\ref{4}. Let $W_j:=X_{(h')^t}$ be the Hessenberg variety whose corresponding Hessenberg function is the \emph{transpose} of $h'$, which is defined by
\[(h')^t(i)=n+1-\min\{j\in [n]:h'(j)\geq n-i+1\}\ \ \text{for } i\in [n].\]
We write $W_j^t:=X_{h'}$. By the canonical duality $(V_i)_{i\in[n]}\mapsto ((\C^n/V_{n-i})^*)_{i\in[n]}$, there exists an equivariant isomorphism (cf. \cite[Theorem 1.2]{AN})
\begin{equation}\label{eq:pf3}
    A^*(W_j)\cong A^*(W_j^t).
\end{equation}
Therefore, by the projective bundle formula,
\begin{equation}\label{eq:pf4}
    A^*(W_j)=[j]_q!A^*(Z_j).
\end{equation}
In order to compute $A^{\leq k}(Z_j)$, now it suffices to compute $A^{\leq k}(W_j)$.

\underline{Step 3}. The Hessenberg function $h^{W_j}$ of $W_j$ is defined by
\[h^{W_j}(i)=\begin{cases}j+k-1 &\text{ if } i\leq j\\
h_k(i) & \text{ if } i>j.\end{cases}\]
By applying Algorithm 1 to $W_j$, we have a sequence of iterated blowups
\[W_j \xrightarrow{~\rho_{n-1}~}~\cdots~ \xrightarrow{~\rho_{j+k}~} X_{\k_{>j+k-1}h^{W_j}}=:\overline{W}_j,\]
where
\begin{enumerate}
    \item $\overline{W}_j$ is the generalized Hessenberg variety corresponding to $h_{j,n-j-k+1}\in \cH_{j+k-1,n}$ using the notation in Lemma~\ref{119} below,
    \item $\rho_{j+k}$ is a $\PP^{n-j-k}$-bundle,
    \item $\rho_{l+k+1}$ is the blowup with the center \[W_j^{l+k}:=X_{\k_{>l+k}\tau_{l}h^{W_j}}\subset X_{\k_{>l+k}h^{W_j}}\]
    of codimension $n-l-k$, for each $j\leq l \leq n-k-2$. 
\end{enumerate}
By the blowup formula and the projective bundle formula, we have
\begin{equation}\label{eq:pf5}
    A^*(W_j)=[n-j-k+1]_qA^*(\overline{W}_j)\oplus q\bigoplus_{l=j}^{n-k-2}[n-l-k-1]_qA^*(W_j^{l+k}).
\end{equation}
Therefore, in order to compute $A^{\leq k}(W_j)$, it is enough to compute (i) $A^{\leq k}(\overline{W}_j)$ and (ii) $A^{\leq k-1}(W_j^{l+k})$. The latter is computed as
\begin{equation}\label{eq:pf6}
    A^{\leq k-1}(W_j^{l+k})= \begin{cases}qM^{(n-1,1)}+qM^{(n-j-1,j+1)} \quad \text{ if }k=2, l=j\\
    2q^{k-1}M^{(n-1,1)} \qquad \text{ if }k=2, j<l\leq n-4 \text{ or if }k\geq 3\end{cases}
\end{equation}
directly by Theorem~\ref{113}, since the corresponding Hessenberg functions contain $h_{k-1}$. On the other hand, the former can be computed inductively by Lemma~\ref{119}:
\begin{equation}\label{eq:pf7}
    A^{\leq k}(\overline{W}_j)=\begin{cases}(q+jq^2)M^{(n-1,1)}+q^2M^{(n-2,2)} & \text{ if }k=2\\
    (q^{k-1}+(j+k-1)q^k)M^{(n-1,1)}& \text{ if }k\geq 3\end{cases}
\end{equation}

As a combination of \eqref{eq:pf1}, \eqref{eq:pf2}, \eqref{eq:pf3}, \eqref{eq:pf4}, \eqref{eq:pf5}, \eqref{eq:pf6} and \eqref{eq:pf7}, we obtain the desired formula. This completes our proof of Theorem~\ref{111}.

\bigskip
The lemma below was used in the proof of Theorem~\ref{111}. 

\begin{lemma}\label{119}
    Let $l,m\geq 1$ be integers with $l+m\leq n$. Let $\cF_{l,m}:=\cF(h_{l,m})$ for $h_{l,m}\in \cH_{n-m,n}$ be the generalized Hessenberg function on $[n-m]$ defined by
    \[h_{l,m}(i)=\begin{cases} n-m & \text{ if }i\leq l\\
     n & \text{ if } l<i\leq n-m.\end{cases}\]
     Then, we have
     \[\cF_{l,m}=[l]_q\cF_{l-1,m+1}+q^l[m+1-l]_q\cF_{l,m+1}.\]
\end{lemma}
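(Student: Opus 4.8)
The plan is to prove the recursion by induction on $l$, using the birational operations of \S\ref{40}. Put $r:=n-m$, so that $h_{l,m}\in\cH_{r,n}$ with $h_{l,m}(r)=n$ and $h_{l,m}^{-1}(r)=\{1,\dots,l\}$; I will work in the range where the $X_{h_{*,*}}$ involved are irreducible (in particular $r>l$), the remaining boundary cases being disposed of separately by Proposition~\ref{34}. When $l=1$ the fibre $h_{1,m}^{-1}(r)=\{1\}$ is a singleton, so Corollary~\ref{48}(2) applies to the deletion morphism $\rho_r$ and gives
\[\cF_{1,m}=\cF\big(((\tau^{1})^{m}h_{1,m})|_{[r-1]\cup\{n\}}\big)+q[m]_q\,\cF\big((\tau_{1}h_{1,m})|_{[r-1]\cup\{n\}}\big).\]
Unwinding Definition~\ref{5}, the first restricted function is the constant function $n$ on $[r-1]$, i.e.\ $h_{0,m+1}$, and the second is $h_{1,m+1}$, which is exactly the asserted identity for $l=1$.

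For $l\ge 2$, I would run the modification process of Corollary~\ref{77} starting from $h_{l,m}$. A direct inspection shows that it produces the chain
\[h_{l,m}=h^{(0)}\dashrightarrow h^{(1)}\dashrightarrow\cdots\dashrightarrow h^{(l-1)},\]
where $h^{(s)}$ is $h_{l,m}$ with the values at $1,\dots,s$ lowered by one (so $h^{(s+1)}=\tau_{s+1}h^{(s)}$), the $s$-th step being the modification of Proposition~\ref{73}(1) at the index $s+1$; the hypotheses $h^{(s)}(s)<h^{(s)}(s+1)=h^{(s)}(s+2)\ne s+2$ and $(h^{(s)})^{-1}(s+1)=\emptyset$ all follow from $r>l$, and the process halts at $h^{(l-1)}$ because $(h^{(l-1)})^{-1}(r)=\{l\}$. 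Writing $a_s:=\cF(h^{(s)})$ for $0\le s\le l-1$ and $a_l:=\cF(h^{(l)})$ with $h^{(l)}:=\tau_{l}h^{(l-1)}$, formula \eqref{75} yields the recursion $a_s=[2]_q\,a_{s+1}-q\,a_{s+2}$ for $0\le s\le l-2$ (using $\tau_{s+1,s+2}h^{(s)}=h^{(s+2)}$). The two boundary values are read off from Corollary~\ref{48}: since $h^{(l)}$ has empty preimage of $r$, part (1) makes $\rho_r$ a projective bundle, so $a_l=[m+1]_q\,\cF_{l,m+1}$; since $h^{(l-1)}$ has preimage $\{l\}$, part (2) gives $a_{l-1}=\cF_{l-1,m+1}+q[m]_q\,\cF_{l,m+1}$. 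Here one uses that $h^{(l)}$, $(\tau^{l})^{m}h^{(l-1)}$, $\tau_{l}h^{(l-1)}$ restrict on $[r-1]\cup\{n\}$ to $h_{l,m+1}$, $h_{l-1,m+1}$, $h_{l,m+1}$ respectively.

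It remains to solve this finite linear recursion. Its characteristic equation $q\,t^{2}-[2]_q\,t+1=0$ has roots $1$ and $q^{-1}$, so $a_s=A+B\,q^{-s}$ for some $A,B\in\Lambda^{n}[q]$; eliminating $A$ and $B$ using the values $a_{l-1}$ and $a_l$ gives the closed form $a_0=[l]_q\,a_{l-1}-q\,[l-1]_q\,a_l$. Substituting the boundary data,
\[\cF_{l,m}=a_0=[l]_q\,\cF_{l-1,m+1}+q\big([l]_q[m]_q-[l-1]_q[m+1]_q\big)\cF_{l,m+1},\]
and the elementary identity $[l]_q[m]_q-[l-1]_q[m+1]_q=q^{\,l-1}[m+1-l]_q$ turns the right-hand side into $[l]_q\cF_{l-1,m+1}+q^{\,l}[m+1-l]_q\cF_{l,m+1}$, as claimed.

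The main obstacle is not conceptual but combinatorial bookkeeping: one must keep precise track of every generalized Hessenberg function produced along the chain and after each deletion morphism $\rho_r$, and verify the identifications with $h_{l-1,m+1}$, $h_{l,m+1}$ and $h_{0,m+1}$ — after that, both the recursion and the $q$-arithmetic are entirely routine. A secondary technical point is to treat separately the degenerate parameter ranges, where $X_{h_{l,m}}$ or $X_{h_{l,m+1}}$ becomes reducible or $r$ is too small for some $\tau$-operation to be defined; these are handled by Proposition~\ref{34}.
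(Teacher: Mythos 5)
Your proof is correct, and it is a worked-out version of the very argument the paper alludes to: the paper's own "proof" is just a citation to \cite[Proposition 2.4]{AN} together with the remark that one can "easily prove this using Theorem \ref{42} and Proposition \ref{73}(1)," which is precisely what you carry out (Corollary \ref{48} and \ref{77} being the relevant consequences of Theorem \ref{42}). The modification chain, the two boundary identifications via Corollary \ref{48}(1) and (2), and the $q$-arithmetic all check out; the only small caveat is that the literal boundary $l+m=n$ (allowed by the lemma's hypotheses) makes $\cF_{l,m+1}$ undefined, so that case cannot simply be pushed to Proposition \ref{34} as you suggest — but this is really an imprecision in the lemma's statement (it is only ever invoked with $l+m<n$), not a gap in your argument.
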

\begin{proof} This is a special case of \cite[Proposition 2.4]{AN}. One can also easily prove this using Theorem~\ref{42} and Proposition~\ref{73}(1).
\end{proof}

We end this paper with the following conjecture which is checked to be true for $n\leq 13$ 
by computer calculation via Algorithm 1. Let $\cF(h_2)=\sum_{\lambda \vdash n}c_\lambda \mathsf{h}_\lambda$ be the equivariant Poincar\'e polynomial of $X_{h_2}$.

\begin{conjecture} For $\lambda=(n-j,j)$, $c_\lambda$ are equal to the following.
    \begin{enumerate}
        \item When $j<\frac{n}{2}$, 
        \[q[2]_q^{n-2}\Big([n-j-1]_q[j]_q+[n-j]_q[j-1]_q\Big)+d_{(n-j,j)}[n-j]_q[j]_q,\]
        where $d_{(n-j,j)}$ is equal to
        \begin{equation*}
        \begin{split}
            &-2q[2]_q^{n-3}-2q^2[2]_q^{n-5}-\cdots -2q^{j-1}[2]_q^{n-2j+1}-q^j[2]_q^{n-2j-1}\\ 
            &+\max(n-2j-2,0)q^{j+1}[2]_q^{n-2j-3}.
        \end{split}
        \end{equation*}
        \item When $j=\frac{n}{2}$,
        \[q[2]_q^{n-2}\left[\frac{n}{2}-1\right]_q\left[\frac{n}{2}\right]_q+d_{(\frac{n}{2},\frac{n}{2})}\left[\frac{n}{2}\right]_q^2,\]
        where $d_{(\frac{n}{2},\frac{n}{2})}$ is equal to
        \[-q[2]_q^{n-3}-q^2[2]_q^{n-5}-\cdots -q^{\frac{n}{2}-1}[2]_q.\]
    \end{enumerate}
\end{conjecture}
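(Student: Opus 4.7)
I would proceed by strong induction on $n$, working only with the two-row isotypic components throughout. The base cases (say $n\le 7$) are already verified computationally via Algorithm 1, so the heart of the argument is the inductive step. For the inductive step, the idea is to apply the blowup and modification relations of Theorems \ref{42} and Proposition \ref{73} to $h_2$ and then project onto the $\mathsf{h}_{(n-j,j)}$-isotypic piece, using the induction hypothesis to evaluate the contributions of intermediate varieties.

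The first key reduction uses Theorem \ref{42}(1) applied at $j=n-2$. Since $h_2(n-2)=h_2(n-1)=n$ and $h_2^{-1}(n-2)=\{n-4\}$, we obtain
\[
\cF(h_2)=\cF(\kappa_{n-2}h_2)+q[2]_q\,\cF(\kappa_{n-2}\tau_{n-4}h_2).
\]
The second summand corresponds to a Hessenberg function that is reducible at $n-4$, and by Proposition \ref{34} decomposes as a product involving $\cF(h_2^{(n-4)})$ for smaller $n$, giving the inductive hook. The first summand corresponds to a generalized Hessenberg variety whose further structure can be unwound by repeating Theorem \ref{42} and Corollary \ref{48} to obtain an explicit blowup tower whose base is the permutohedral variety $X_{h_1}$, for which the character is given by the recursion \eqref{55}. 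One then rewrites $\cF(h_2)$ as a sum of contributions from the base $X_{h_1}$ and from the blowup centers arising along the tower.

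Second, I would isolate the main term and the correction $d_{(n-j,j)}$. The factor $q[2]_q^{n-2}$ reflects the cumulative effect of $n-2$ successive $\PP^1$-bundle steps, and the combinatorial factor $[n-j-1]_q[j]_q+[n-j]_q[j-1]_q$ should come out directly from pairing up the two-row contributions produced by the permutohedral recursion restricted to the block structure $(n-j,j)$. The correction $d_{(n-j,j)}[n-j]_q[j]_q$ should assemble from the iterated blowup centers, each of which is itself a generalized Hessenberg variety to which the induction hypothesis applies after reducing with Proposition \ref{34} and Corollary \ref{31}. The balanced case $j=n/2$ follows from the same analysis, with the symmetric role of the two blocks accounting for the factor-of-two drop in the coefficients.

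The main obstacle is matching the alternating sign pattern in $d_{(n-j,j)}$, namely the sum $-2q[2]_q^{n-3}-2q^2[2]_q^{n-5}-\cdots$ together with its final two terms. Blowup formulas contribute positively, so the negative signs must originate from the $-q$ terms in the modification identities \eqref{75} and \eqref{76} of Proposition \ref{73}. Extracting the precise pattern will require organising the iterated modifications as a telescoping sum indexed by the position at which each modification is applied; equivalently, one must prove a polynomial identity that collects the contributions of modification centres of codimensions $2,3,\dots,j$ into the stated alternating expression. This step is combinatorial in nature and, given the quite structured form of $d_{(n-j,j)}$, should reduce to a finite family of $q$-identities of the type $[a+1]_q[b+1]_q=[a+b+1]_q+q[a]_q[b]_q$ used throughout the paper (cf.\ Proposition \ref{106}); verifying this family directly is where the real work lies.
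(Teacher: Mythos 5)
The statement you are trying to prove is explicitly left as a \emph{conjecture} in the paper; the authors state that it has been ``checked to be true for $n\le 13$ by computer calculation via Algorithm 1'' and offer no proof. So there is no paper proof to compare your plan against, and the question is whether your outline actually closes the gap that the authors themselves left open. It does not.

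Your setup is reasonable: applying Theorem~\ref{42}(1) at $j=n-2$ to $h_2$ is legitimate (indeed $h_2(n-2)=h_2(n-1)=n$ and $h_2^{-1}(n-2)=\{n-4\}$ for $n\ge 5$), and the resulting split into $\cF(\kappa_{n-2}h_2)$ and a reducible term is a sensible first step. Likewise the observation that the main factor $q[2]_q^{n-2}$ should emerge from the tower of $\PP^1$-steps, and that the negative terms in $d_{(n-j,j)}$ must trace back to the $-q$ corrections in \eqref{75} and \eqref{76}, is consistent with the machinery of \S5 and \S6 (compare how Theorem~\ref{111} is actually proved via Lemma~\ref{119} and the transpose identity \eqref{eq:pf3}). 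But you explicitly stop at the point where the argument would have to produce, for arbitrary $n$ and $j$, the specific alternating expression
$-2q[2]_q^{n-3}-2q^2[2]_q^{n-5}-\cdots-q^j[2]_q^{n-2j-1}+\max(n-2j-2,0)q^{j+1}[2]_q^{n-2j-3}$,
and you acknowledge that ``verifying this family directly is where the real work lies.'' That is not a minor bookkeeping step: the number of blowup/modification centres and their codimensions grows with $n$, the centres decompose by Proposition~\ref{34} into products of smaller permutohedral-type pieces whose two-row characters interact nontrivially via $\mathrm{Ind}^{S_n}_{S_i\times S_{n-i}}$, and the degeneracy at $j=\lfloor n/2\rfloor$ (the factor-of-two drop, and the $\max(n-2j-2,0)$ cutoff) has to be derived, not just declared to ``follow from symmetry.'' Without an explicit telescoping identity or generating-function argument pinning all of these coefficients down simultaneously, the inductive step is not established. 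As it stands your plan reproduces the framework the authors already have at their disposal and uses to prove Theorem~\ref{111} in the much more constrained range of degree $\le k+1$, but does not supply the missing closed-form combinatorial identity that would resolve the conjecture for all $j$; the statement remains open.
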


For example, $c_{(n-1,1)}$ is conjecturally equal to
\[q[2]_q^{n-2}[n-2]_q-\Big(q[2]_q^{n-3}-(n-4)q^2[2]_q^{n-5}\Big)[n-1]_q.\]

\bibliographystyle{alpha}

\end{document}